\documentclass[12pt,letterpaper]{amsart}

\oddsidemargin0.25in
\evensidemargin0.25in
\textwidth6.00in
\topmargin0.00in
\textheight8.50in

\newcounter{myenum}

\newcommand{\la}{\langle}
\newcommand{\ra}{\rangle}

\newcommand{\I}{\operatorname{Im}}
\newcommand{\ds}{\displaystyle}

\newcommand{\nls}{\operatorname{NLS}}
\newcommand{\egb}{\exists \hspace{-.1cm}\operatorname{GB}}

\usepackage{amssymb}
\usepackage{amsthm}
\usepackage{amsxtra}
\usepackage{graphicx}

\newtheorem{theorem}{Theorem}
\newtheorem{definition}[theorem]{Definition}
\newtheorem{proposition}[theorem]{Proposition}
\newtheorem{lemma}[theorem]{Lemma}
\newtheorem{corollary}[theorem]{Corollary}

\theoremstyle{remark}
\newtheorem{remark}[theorem]{Remark}

\newcommand{\cR}{\mathbb{R}}

\numberwithin{equation}{section}
\numberwithin{theorem}{section}

\ifx\pdfoutput\undefined
  \DeclareGraphicsExtensions{.pstex, .eps}
\else
  \ifx\pdfoutput\relax
    \DeclareGraphicsExtensions{.pstex, .eps}
  \else
    \ifnum\pdfoutput>0
      \DeclareGraphicsExtensions{.pdf}
    \else
      \DeclareGraphicsExtensions{.pstex, .eps}
    \fi
  \fi
\fi

\title[Divergence of 3d NLS solutions]
{Divergence of infinite-variance nonradial solutions to the 3d  NLS
equation}

\author{Justin Holmer}
\address{Brown University}
\author{Svetlana Roudenko}
\address{Arizona State University}

\begin{document}

\begin{abstract}
We consider solutions $u(t)$ to the 3d focusing NLS equation $i\partial_t u +
\Delta u + |u|^2u=0$ such that $\|xu(t)\|_{L^2} = \infty$ and $u(t)$
is nonradial.  Denoting by $M[u]$ and $E[u]$, the mass and energy,
respectively, of a solution $u$, and by $Q(x)$ the ground state
solution to $-Q+\Delta Q+|Q|^2Q=0$, we prove the following: if
$M[u]E[u]<M[Q]E[Q]$ and $\|u_0\|_{L^2}\|\nabla
u_0\|_{L^2}>\|Q\|_{L^2}\|\nabla Q\|_{L^2}$, then either $u(t)$
blows-up in finite positive time or $u(t)$ exists globally for all
positive time and there exists a sequence of times $t_n\to +\infty$
such that $\|\nabla u(t_n)\|_{L^2} \to \infty$. Similar statements
hold for negative time.
\end{abstract}

\maketitle

\section{Introduction}

The 3d focusing nonlinear Schr\"odinger equation (NLS) is
\begin{equation}
 \label{E:NLS}
i\partial_t u + \Delta u + |u|^2u=0 \, ,
\end{equation}
where $u=u(x,t)\in \mathbb{C}$ and $(x,t)\in \mathbb{R}^{3+1}$. We
shall denote the initial data $u_0(x)=u(x,0)$.  The standard local
theory in $H^1$ is based upon the Strichartz estimates (see Cazenave
\cite{C}, Tao \cite{T}), and asserts the existence of a maximal
forward time $T^* \gtrsim \|\nabla u_0\|_{L^2}^{-4}$ such that $u(t)\in
C([0,T^*); H_x^1)$.  If $T^*<\infty$, then it follows from the local
theory that $\|\nabla u(t)\|_{L^2} \to +\infty$ as $t\nearrow T^*$,
and we say that $u(t)$ \emph{blows-up} in finite forward time.  If,
on the other hand, $T^*=+\infty$, then we say that $u(t)$ exists
globally in (forward) time.  In this case, the local theory gives us
no information about the behavior of $\|\nabla u(t)\|_{L^2}$ as
$t\to +\infty$.  Analogous statements hold backwards in time.  In
fact, if $u(t)$ solves NLS, then $\bar u(-t)$ solves NLS, and thus,
it suffices to study the forward-in-time case\footnote{This is not
to say that a given solution $u(t)$ must have the same
forward-in-time and backward-in-time behavior; however, if $u_0$ is
real-valued, then $u(t)=\bar u(-t)$.}.

Solutions to \eqref{E:NLS} in $H^1$ satisfy mass, energy, and
momentum conservation, given respectively by
$$
M[u] = \|u\|_{L^2}^2, \quad E[u]= \frac12\|\nabla u\|_{L^2}^2 -
\frac14\|u\|_{L^4}^4 , \quad P[u] = \I \int \bar u \; \nabla u\,.
$$

There exists a ground state (minimal $L^2$ norm) solution $Q=Q(x)$ to the (stationary) nonlinear elliptic equation
$$-Q + \Delta Q + |Q|^2Q=0 \,,$$
which is unique modulo translation and gauge symmetry.  
This $Q$ is radial, smooth, positive, and behaves as $Q(x) \sim e^{-|x|}$ for $|x|\to +\infty$.  It gives rise to a solution $u(x,t) = e^{it}Q(x)$ to \eqref{E:NLS} called the ground state soliton.

In Holmer-Roudenko \cite{HR}, we proved that if $u_0\in H^1$,
$\|u_0\|_{L^2}\|\nabla u_0\|_{L^2} > \|Q\|_{L^2}\|\nabla Q\|_{L^2}$
and $M[u]E[u]<M[Q]E[Q]$, then $u(t)$ blows-up in finite forward (and
finite backward) time, provided that either (1)
$\|xu_0\|_{L^2}<\infty$, that is, the initial data (and hence, the
whole flow $u(t)$) has finite variance, or (2) $u_0$ (and hence, the
whole flow $u(t)$) is radial. Moreover, it is sharp in the sense
that $u(t)=e^{it}Q(x)$ solves NLS and does not blow-up in finite
time. Via the Galilean transform and momentum conservation, if
$P[u]\neq 0$, this can be refined to the following:  if
$M[u]E[u]-\frac12P[u]^2<M[Q]E[Q]$ and $\|u_0\|_{L^2}\|\nabla
u_0\|_{L^2} > \|Q\|_{L^2}\|\nabla Q\|_{L^2}$, then the above
conclusions hold (see Appendix \ref{S:Galilean} for clarification).
These results are essentially classical.  The finite variance case
follows from the virial identity \cite{VPT}, \cite{G}:
$$
\partial_t^2 \|xu(t)\|_{L^2}^2 = 24E[u]-4\|\nabla u\|_{L^2}^2
$$
and the sharp Gagliardo-Nirenberg inequality \cite{W83}. The radial
case follows from a localized virial identity and a radial
Gagliardo-Nirenberg inequality \cite{S}. The radial case is an
extension of a result of Ogawa-Tsutsumi \cite{OT91}, who proved the
case $E[u]<0$. Martel in \cite{M97} showed that in the case of $E<0$
either finite variance or radiality assumptions can be relaxed to
nonisotropic ones, namely, if (1) $\| \,|y| \, u_0 \|_{L^2_x} <
\infty$ where $y = (x_1,x_2)$, or (2) $u_0(x_1,x_2,x_3) =
u_0(|y|,x_3)$.

In this paper, we drop the additional hypothesis of finite variance
and radiality and obtain the following conclusion:

\begin{theorem}
 \label{T:main}
Suppose $u_0\in H^1$, $M[u]E[u]<M[Q]E[Q]$ and $\|u_0\|_{L^2}\|\nabla
u_0\|_{L^2}> \|Q\|_{L^2}\|\nabla Q\|_{L^2}$.  Then either $u(t)$
blows-up in finite forward time or $u(t)$ is forward global and
there exists a sequence $t_n\to +\infty$ such that $\|\nabla
u(t_n)\|_{L^2} \to +\infty$.  A similar statement holds for negative
time.
\end{theorem}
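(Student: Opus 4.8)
The plan is to prove the contrapositive by a localized virial argument. By the local theory, if the forward maximal time $T^{*}$ is finite then $\|\nabla u(t)\|_{L^{2}}\to+\infty$ as $t\nearrow T^{*}$ and there is nothing to prove, so assume instead $T^{*}=+\infty$ and that \emph{no} sequence $t_{n}\to+\infty$ with $\|\nabla u(t_{n})\|_{L^{2}}\to+\infty$ exists. Since $u\in C([0,\infty);H^{1})$, this forces $C_{0}:=\sup_{t\ge0}\|\nabla u(t)\|_{L^{2}}<\infty$, and with mass conservation $\sup_{t\ge0}\|u(t)\|_{H^{1}}<\infty$; I would derive a contradiction. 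The only input I would borrow from \cite{HR} is the coercivity established there: under the standing hypotheses on $M[u]E[u]$ and $\|u_{0}\|_{L^{2}}\|\nabla u_{0}\|_{L^{2}}$ there is $c>0$, depending on $u$ only through its conserved quantities, with
\[
8\|\nabla u(t)\|_{L^{2}}^{2}-6\|u(t)\|_{L^{4}}^{4}\le-2c\qquad\text{for all }t\ge0,
\]
the left-hand side being precisely $\partial_{t}^{2}\|xu(t)\|_{L^{2}}^{2}$ in the finite-variance case.

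Next I would run a truncated virial identity. Fix a smooth, radial, nonnegative $\phi$ on $\mathbb{R}^{3}$ with $\phi(x)=|x|^{2}$ for $|x|\le1$, $\phi$ bounded, and $\operatorname{Hess}\phi\preceq2I$ everywhere; set $\phi_{R}(x)=R^{2}\phi(x/R)$ and $z_{R}(t)=\int\phi_{R}(x)\,|u(x,t)|^{2}\,dx\ge0$. Differentiating twice and using that $\phi_{R}(x)=|x|^{2}$ on $\{|x|\le R\}$ gives
\[
z_{R}''(t)=\bigl(8\|\nabla u(t)\|_{L^{2}}^{2}-6\|u(t)\|_{L^{4}}^{4}\bigr)+\mathcal E_{R}(t),
\]
where $\mathcal E_{R}(t)$ involves only integrals over $\{|x|>R\}$: the constraint $\operatorname{Hess}\phi_{R}\preceq2I$ makes the gradient part of $\mathcal E_{R}$ nonpositive, while the fourth-order term $\Delta^{2}\phi_{R}$ costs only $O(R^{-2}M[u])$, so that, combined with the coercivity above,
\[
z_{R}''(t)\le-2c+C\!\int_{|x|>R}|u(x,t)|^{4}\,dx+\frac{C}{R^{2}}M[u].
\]
In the radial case the $L^{4}$ tail would be killed by Strauss's pointwise decay estimate; the whole difficulty here is that no such pointwise bound is available without radiality.

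Instead I would control that tail on a long time interval by a mass-flux estimate. Let $\psi_{R}$ be a smooth cutoff, $\psi_{R}=1$ on $\{|x|>R/2\}$, $\psi_{R}=0$ on $\{|x|<R/4\}$, $|\nabla\psi_{R}|\le C/R$, and put $\mu_{R}(t)=\int\psi_{R}|u(x,t)|^{2}\,dx$. The mass-current bound gives $|\mu_{R}'(t)|\le2\|\nabla\psi_{R}\|_{L^{\infty}}\|u(t)\|_{L^{2}}\|\nabla u(t)\|_{L^{2}}\le C_{5}/R$, hence $\mu_{R}(t)\le\mu_{R}(0)+C_{5}R^{-1}t$, while $\mu_{R}(0)\le\int_{|x|>R/4}|u_{0}|^{2}\to0$ as $R\to\infty$ because $u_{0}\in L^{2}$. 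A localized Gagliardo-Nirenberg inequality — applied to $\chi u$ for a cutoff $\chi$ to $\{|x|>R\}$ with $\chi^{2}\le\psi_{R}$, and using $\|\nabla u(t)\|_{L^{2}}\le C_{0}$ — yields $\int_{|x|>R}|u(t)|^{4}\le C_{7}\,\mu_{R}(t)^{1/2}$. So, choosing $\epsilon>0$ with $CC_{7}\sqrt{\epsilon}\le c/2$ and then $R$ large enough that $\mu_{R}(0)\le\epsilon/2$ and $CR^{-2}M[u]\le c/2$, I obtain $z_{R}''(t)\le-c$ on $[0,T_{R}]$ with $T_{R}:=\epsilon R/(2C_{5})\to+\infty$.

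The last step, where infinite variance enters essentially, is that the usual bounds $z_{R}(0)=O(R^{2})$ and $z_{R}'(0)=O(R)$ are exactly borderline against $\tfrac12cT_{R}^{2}\sim R^{2}$ and must be sharpened. Splitting each integral defining $z_{R}(0)$ and $z_{R}'(0)$ at the intermediate radius $|x|=\sqrt{R}$, and using $u_{0}\in L^{2}$ together with $\|\nabla u_{0}\|_{L^{2}}\le C_{0}$, gives $z_{R}(0)\le RM[u]+CR^{2}\!\int_{|x|>\sqrt{R}}|u_{0}|^{2}=o(R^{2})$ and $|z_{R}'(0)|\le C\sqrt{R}+CR\bigl(\int_{|x|>\sqrt{R}}|u_{0}|^{2}\bigr)^{1/2}=o(R)$. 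Then, since $z_{R}''\le-c$ on $[0,T_{R}]$,
\[
0\le z_{R}(T_{R})\le z_{R}(0)+z_{R}'(0)T_{R}-\tfrac{c}{2}T_{R}^{2}=o(R^{2})-\tfrac{c\epsilon^{2}}{8C_{5}^{2}}R^{2}<0
\]
for $R$ large, the desired contradiction, which produces the claimed sequence $t_{n}$. The negative-time statement follows by applying the result to $\bar u(-t)$, which also solves \eqref{E:NLS}. I expect the main obstacle to be precisely the bookkeeping that forces the three scales — the truncation radius $R$, the intermediate radius $\sqrt{R}$ in the initial data, and the time horizon $T_{R}\sim R$ — to be mutually compatible: little enough mass may leak past radius $R$ on $[0,T_{R}]$ to keep the $L^{4}$ tail below the coercivity gap $c$, while the initial virial quantities $z_{R}(0),z_{R}'(0)$ stay negligible against $T_{R}^{2}$.
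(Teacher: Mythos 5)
Your argument is correct, and it reaches Theorem \ref{T:main} by a genuinely different and far more elementary route than the paper. The paper proceeds through the concentration-compactness--rigidity scheme: assuming the conclusion fails, it runs an induction on the threshold $\sigma_c$, extracts a critical solution via the profile decomposition (Prop.\ \ref{P:pd}, Lemmas \ref{L:Pythag-flow}, \ref{L:exist-nonscat}, \ref{L:exist-critical}), shows this solution is precompact in $H^1$ modulo translation (Lemma \ref{L:compactness}), and only then invokes the localized virial identity in the form of Prop.\ \ref{P:local-blowup-time}, whose hypothesis of uniform-in-time spatial localization is supplied by that compactness. You instead observe that the \emph{a priori} localization in Prop.\ \ref{P:local-blowup-time} is not needed for all time but only on a window $[0,T_R]$ with $T_R\sim R$, and that this much localization comes for free from the mass-flux bound $|\mu_R'(t)|\lesssim R^{-1}\|u\|_{L^2}\|\nabla u\|_{L^2}$ once $\sup_{t\ge 0}\|\nabla u(t)\|_{L^2}$ is assumed finite; the borderline terms $z_R(0)=O(R^2)$ and $z_R'(0)=O(R)$ --- which would otherwise be comparable to the gain $\tfrac{c}{2}T_R^2\sim c\epsilon^2R^2/C_5^2$ with its small constant --- are correctly beaten by splitting at the intermediate radius $\sqrt{R}$ using only $u_0\in L^2$ and the uniform gradient bound. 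The quantitative choices (first $\epsilon$ from the coercivity gap and the exterior Gagliardo--Nirenberg constant, then $R$ large) are consistently ordered, and the coercivity $8\|\nabla u(t)\|_{L^2}^2-6\|u(t)\|_{L^4}^4\le -2c$ you import is exactly what Prop.\ \ref{P:dichotomy} together with the Pohozhaev identities gives. What each approach buys: yours is shorter, self-contained, and reaches the contradiction on an explicit time scale, at the price of being tied to the specific structure of the virial and mass-current identities; the paper's machinery is much heavier but is deliberately assembled as a reusable template (profile decomposition, perturbation theory, rigidity) that the authors expect to transfer to problems --- the Zakharov system, divergence of critical norms --- where a direct virial/flux manipulation of this kind does not obviously close.
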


It is still possible, as far as we know, that a given solution
satisfying the hypothesis might, say, blow-up in finite negative time but
be global in forward time with the existence of a sequence $t_n\to
+\infty$ such that $\|\nabla u_n(t)\|_{L^2}\to +\infty$.  In other
words, a given solution might have different behavior in forward and
backward times.

The above remarks regarding the refinement for $P[u]\neq 0$, by
applying a Galilean transformation to convert to a solution with
$P[u]=0$, apply in the context of Theorem \ref{T:main} as well.  In
fact, we will always assume $P[u]=0$ in this paper (see Appendix
\ref{S:Galilean} for the standard details).

A result similar to Theorem \ref{T:main} was obtained by
Glangetas-Merle \cite{GM} for the case of $E[u]<0$ (see also Nawa
\cite{N}). However, our proof is different in structure and uses a
different form of concentration compactness machinery.  Our proof is
more akin to the proof of the scattering result we have in
\cite{HR}, \cite{DHR}, appealing to (suitable adaptations of) the
profile decomposition results of Keraani \cite{K}, nonlinear
perturbation theory based upon the Strichartz estimates, and
rigidity theorems based upon the localized virial identity.  Our
scattering result was in turn modeled on a similar result by
Kenig-Merle \cite{KM} for the energy-critical NLS equation.  In his
various lectures, Kenig refers to this scheme as the ``concentration
compactness--rigidity method'' and discusses a ``road map'' for
applying it to various problems. We believe that this method applied
to prove Theorem \ref{T:main} has more potential for generalization.
In particular, it could perhaps provide an affirmative answer to:

\medskip

\noindent\textbf{Weak conjecture}.  Under the hypothesis of Theorem
\ref{T:main}, either $u(t)$ blows-up in finite forward time or
$\|\nabla u(t)\|_{L^2} \to \infty$ as $t\to +\infty$.

\medskip

\noindent\textbf{Strong conjecture}. Under the hypothesis of Theorem
\ref{T:main}, $u(t)$ blows-up in finite forward time.

\medskip

Why are we interested in removing the finite-variance hypothesis
from our earlier result?  The assumption $\|xu_0\|_{L^2}< \infty$
might be considered unnatural on the grounds that blow-up is a
local-in-space phenomenon and should not be dictated, in such a
strong sense, by the size of the initial data at spatial infinity.
In the case $\|xu_0\|_{L^2}<\infty$ addressed in \cite{HR}, the
proof given via the virial identity actually provides, once the
solution is scaled so that $M[u]=M[Q]$, an upper bound $T_b$ on the
blow-up time $T^*$, where $T_b$ is given as:
$$
T_b = r'(0) + \sqrt{r'(0)^2+2r(0)} \,,
$$
and
$$
r(0) = c_1 \|x u_0\|_{L^2}^2\,, \quad r'(0) = 4 c_1 \I \int (x\cdot
\nabla u_0) \; \bar u_0 \,.
$$
Here, $c_1$ is a constant depending on $E[u]$ that diverges as
$E[u]\nearrow E[Q]$.  We carry out this classical argument in Prop.
\ref{P:blowup-time}. This upper bound is actually an estimate for
the time at which $\|xu(t)\|_{L^2}=0$ if $u(t)$ were to continue to
exist up to that time. However, numerics show that even if blow-up
occurs at the origin, the variance $\|xu(t)\|_{L^2}$ actually does
not go to zero at the blow-up time due to radiated mass ejected from
the blow-up core, and thus, blow-up occurs before the time predicted
by this method.  This suggests that the full variance
$\|xu(t)\|_{L^2}$ is not the correct quantity on which to base a
blow-up theory.    An analysis of the radial case using the radial
Gagliardo-Nirenberg inequality (carried out in Prop.
\ref{P:radial-blowup-time}) reveals that  there is an upper bound
expressible entirely in terms of a spatially truncated version of
$r'(0)$ as well as the proximity of $E[u]$ to $E[Q]$.  Thus, the
size of the initial variance does not appear at all, and $r'(0)$ can
be thought of as measuring the degree and sign of quadratic
phase.\footnote{The relevance of quadratic phase seems  very
important from our numerics, see forthcoming paper \cite{HPR09}.  We
remark that in the 2d case it is exactly quantifiable via the
pseudoconformal transformation.} Theorem \ref{T:main} might be
considered the first step in assessing the relevance of the variance
in blow-up theory of nonradial solutions, even though it is,
unfortunately, nonquantitative.\footnote{Another problem we
face in the nonradial case is that of predicting the \emph{location}
of the blow-up. Nothing says that blow-up should occur at the
origin, even if $P[u]=0$.}

Another motivation is that there exist equations with less structure
that NLS, such as the Zakharov system, for which the assumption of
finite variance is not known to be of assistance in proving that
negative energy solutions blow-up.  Merle \cite{Mer96a} proved using
a localized virial-type identity that \emph{radial} negative energy
solutions of the 3d Zakharov system behave according to the
conclusion of Theorem \ref{T:main}.  No result is known for
nonradial solutions (finite-variance or not) and it is conceivable
that the concentration compactness methods of this paper might be of
assistance in addressing this case.  Even for the 3d NLS equation
\eqref{E:NLS} itself, there are studies in the behavior of
finite-time blow-up solutions, such as the divergence of the
critical $L^3$ norm proved for radial solutions in Merle-Raphael
\cite{MR}, for which concentration compactness methods might enable
one to remove the radiality assumption.

The paper is structured as follows. \S \ref{S:prelim}--\ref{S:pd}
are devoted to preparatory material; \S
\ref{S:outline-induction}--\ref{S:concentration} are devoted the
proof of Theorem \ref{T:main}.  In \S \ref{S:prelim}, we review the
dichotomy and scattering result we obtained in \cite{HR},
\cite{DHR}. In \S \ref{S:virial} we deduce some blow-up theorems for
the virial identity and its localized versions -- in the nonradial
case, we are forced to assume an \emph{a priori} uniform-in-time
localization on the solution under consideration.  In \S
\ref{S:varQ}, we rewrite the variational characterization of the
ground state $Q$ from Lions \cite{L84} in a form that is more
compatible with the scale-invariant perspective of this paper; this
material is needed for \S \ref{S:near-boundary}.  In \S
\ref{S:near-boundary}, we carry out the base-case of the inductive
argument that follows in
\S\ref{S:outline-induction}--\ref{S:concentration}.  Under the
assumption that Theorem \ref{T:main} is false, we are able to
construct a special ``critical'' solution that remains
uniformly-in-time concentrated in $H^1$.  Such a solution would
contradict the results of \S \ref{S:virial}, and hence, cannot
exist.

\subsection{Acknowledgements}
The second author would like to thank Patrick G\'erard for
discussions leading to the questions addressed in this paper and
help in retrieving the reference \cite{GM}. J. H. is partially
supported by a Sloan fellowship and NSF Grant DMS-0901582. 
S. R. is partially supported by NSF-DMS grant \# 0808081.

\section{Ground state and dichotomy}
\label{S:prelim}

We begin by recalling a few basic facts about the ground state $Q$, the minimal
mass $H^1(\cR^3)$ solution of $-Q + \Delta Q + |Q|^2Q = 0$.

Weinstein \cite{W83} proved that the sharp constant $c_{GN}$ of
Gagliardo-Nirenberg inequality
\begin{equation}
 \label{E:GN}
\|u\|_{L^4(\cR^3)}^4 \leq c_{GN} \|u\|_{L^2(\cR^3)} \|\nabla
u\|_{L^2(\cR^3)}^3
\end{equation}
is achieved by taking $u=Q$.  Using the Pohozhaev identities
$$
\|\nabla Q\|_2^2 = 3 \|Q\|_2^2, \quad \|Q\|_4^4 = 4 \|Q\|_2^2,
$$
we can express $c_{GN}$ as
\begin{equation}
\label{E:cGN}
c_{GN} = \frac{4}{3\|Q\|_2\|\nabla Q\|_{2}} \,.
\end{equation}
The Pohozhaev identities also give:
\begin{equation}
\label{E:ME}
E[Q] = \frac16\|\nabla Q\|_{L^2}^2 \,.
\end{equation}
Let
\begin{equation}
 \label{E:eta-def}
\eta(t) = \frac{\|u(t)\|_{L^2}\|\nabla
u(t)\|_{L^2}}{\|Q\|_{L^2}\|\nabla Q\|_{L^2}} \,.
\end{equation}
By \eqref{E:GN}, \eqref{E:cGN}, and \eqref{E:ME}, we have
\begin{equation}
\label{E:energy1} 3\eta(t)^2 \geq \frac{M[u] E[u]}{M[Q]E[Q]} \geq
3\eta(t)^2-2\eta(t)^3 ,
\end{equation}
see Figure \ref{F:dichotomy}.

\begin{figure}[h]
\includegraphics[width=400pt]{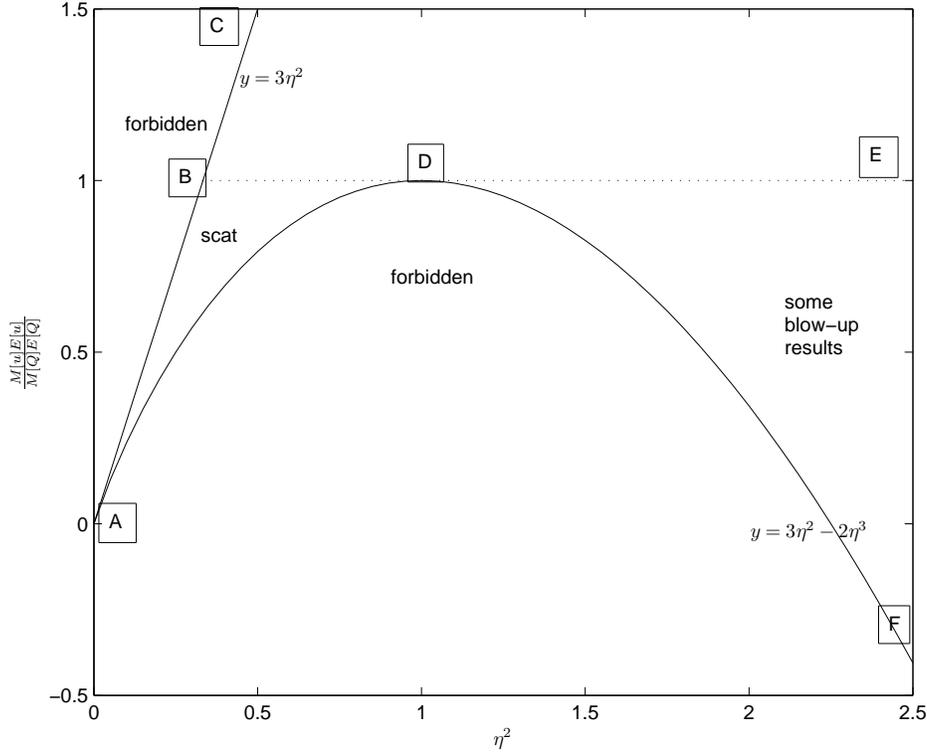}
\caption{A plot of $M[u]E[u]/M[Q]E[Q]$ versus $\eta^2$, where $\eta$
is defined by \eqref{E:eta-def}.  The area to the left of line ABC
and inside region ADF are excluded by \eqref{E:energy1}.  The region
inside ABD corresponds to case (1) of Prop. \ref{P:dichotomy} and
Theorem \ref{T:scattering} (solutions scatter).  The region EDF
corresponds to case (2) of Prop. \ref{P:dichotomy} and Theorem
\ref{T:main} (solutions either blow-up in finite time or diverge in
$H^1$ along a sequence $t_n\to \infty$), Prop. \ref{P:blowup-time}
(finite-variance solutions blow-up in finite time), and Prop.
\ref{P:radial-blowup-time} (radial solutions blow-up in
finite-time).   Behavior of solutions on the dotted line
(mass-energy threshold line) is given by \cite[Theorem 3-4]{DR}. }
 \label{F:dichotomy}
\end{figure}

Suppose that $M[u]E[u]/M[Q]E[Q]<1$. Then we have 2 cases:
\begin{itemize}
\item
If $0 \leq M[u]E[u]/M[Q]E[Q]<1$, then there exist two solutions (see
Figure \ref{F:evolution}) $0 \leq \lambda_-<1<\lambda$ to
\begin{equation}
 \label{E:energy-lambda}
\frac{M[u] E[u]}{M[Q]E[Q]} = 3\lambda^2-2\lambda^3 \,.
\end{equation}
\item
If $E[u]<0$, then there exists exactly one solution $\lambda>1$ to
\eqref{E:energy-lambda}.
\end{itemize}
By the $H^1$ local theory, there exist $-\infty\leq T_*<0<T^*\leq
\infty$ such that $T_*< t < T^*$ is the maximal time interval of
existence for $u(t)$ solving \eqref{E:NLS}. Moreover,
$$
T^*<+\infty \implies \|\nabla u(t)\|_{L^2} \geq
\frac{c}{(T^*-t)^{1/4}} \quad \text{ as }t\nearrow T^*
$$
with a similar statement holding if $T_*\neq -\infty$.

\begin{figure}
\includegraphics[width=400pt]{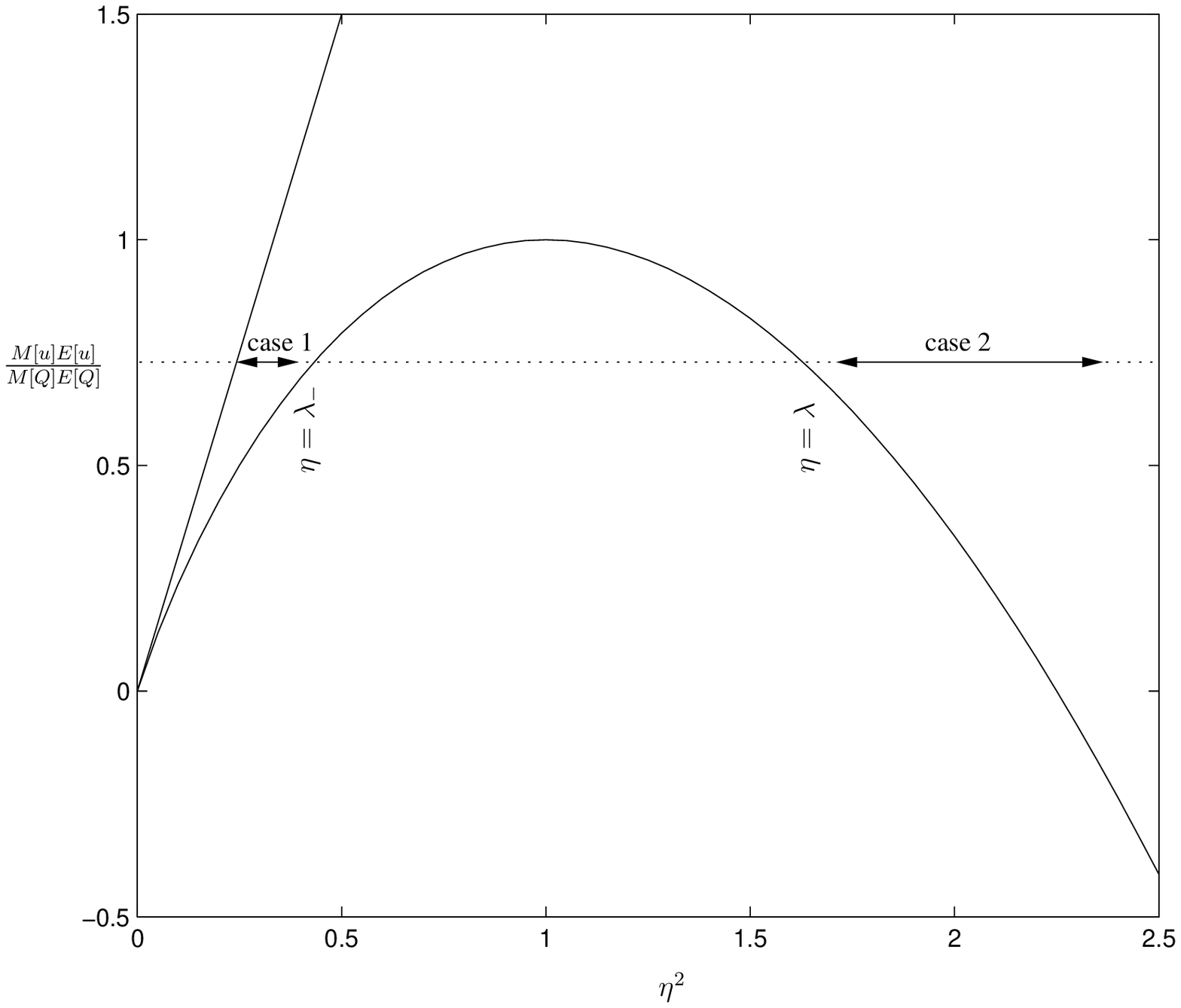}
\caption{On the plot of $M[u]E[u]/M[Q]E[Q]$ versus $\eta^2$,
indicates how a choice of $M[u]E[u]/M[Q]E[Q]$ determines via
\eqref{E:energy-lambda} (at most) two special values of $\eta$,
namely $\eta=\lambda_-$ and $\eta=\lambda$.  In the NLS flow in Case
1 and Case 2 of Prop. \ref{P:dichotomy}, $\eta(t)$ moves along the
indicated horizontal lines.  Note that Theorem \ref{T:scattering}
states that in Case 1, $\eta(t)$ approaches the left endpoint as
$t\to \pm \infty$. Theorem \ref{T:main} states that in Case 2, there
exists a sequence of times $t_n\to +\infty$ along which
$\eta(t_n)\to +\infty$.}
 \label{F:evolution}
\end{figure}

The following is a consequence of the continuity of the flow $u(t)$
(see Figures \ref{F:dichotomy}--\ref{F:evolution}).  The proof is
carried out in \cite[Theorem 4.2]{HR}.

\begin{proposition}[dichotomy]
 \label{P:dichotomy}
Let $M[u]E[u]<M[Q]E[Q]$ and $0 \leq \lambda_- < 1 <\lambda$ be
defined as above. Then exactly one of the following holds:
\begin{enumerate}
\item
The solution $u(t)$ is global (i.e., $T_*=-\infty$ and
$T^*=+\infty$) and
$$
\forall \; t\in (-\infty,+\infty) \,,
\quad \frac13 \cdot \frac{M[u]E[u]}{M[Q]E[Q]} \leq \eta(t)^2 \leq \lambda_-^2 \,.
$$
\item
$\forall \; t\in (T_*,T^*) \,, \quad \lambda \leq \eta(t)\,.$
\end{enumerate}
The first case is only possible for $0\leq M[u]E[u]/M[Q]E[Q]\leq 1$.
\end{proposition}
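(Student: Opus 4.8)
The plan is to combine the energy-trapping inequality \eqref{E:energy1} with the continuity in $t$ of the scale-invariant ratio $\eta(t)$. Set $\mu := M[u]E[u]/M[Q]E[Q]$; since $M[Q],E[Q]>0$ (the latter by \eqref{E:ME}) the hypothesis gives $\mu<1$. First I would record the elementary shape of the cubic $f(s):=3s^2-2s^3$ on $[0,\infty)$: it is strictly increasing on $[0,1]$ with $f(0)=0$ and maximum $f(1)=1$, and strictly decreasing on $[1,\infty)$ with $f(s)\to-\infty$; moreover $f(3/2)=0$. Hence the equation $f(s)=\mu$ has exactly the two nonnegative roots $\lambda_-\in[0,1)$ and $\lambda\in(1,3/2]$ when $0\le\mu<1$ (with $\lambda_-=0$ when $\mu=0$), and the single root $\lambda\in(3/2,\infty)$ when $\mu<0$; and in every case $\{\,s\ge 0:\ f(s)\le\mu\,\}=[0,\lambda_-]\cup[\lambda,\infty)$, with the understanding that $[0,\lambda_-]$ is absent when $\mu<0$. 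This already proves the last sentence of the proposition, namely that case (1) requires $0\le\mu\le 1$.

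Next I would feed the NLS flow into this picture. Let $I=(T_*,T^*)$ be the maximal existence interval. The right-hand inequality in \eqref{E:energy1} reads $\mu\ge 3\eta(t)^2-2\eta(t)^3=f(\eta(t))$ for every $t\in I$, so $\eta(t)\in[0,\lambda_-]\cup[\lambda,\infty)$ for each such $t$. Because $u\in C(I;H^1)$ the map $t\mapsto\|\nabla u(t)\|_{L^2}$ is continuous, and $\|u(t)\|_{L^2}$ is constant by mass conservation, so $\eta:I\to[0,\infty)$ is continuous and $\eta(I)$ is connected. A connected subset of $[0,\lambda_-]\cup[\lambda,\infty)$ lies in one of the two pieces (which are genuinely disjoint since $\lambda_-<1<\lambda$), so either $\eta(t)\le\lambda_-$ for all $t\in I$ or $\eta(t)\ge\lambda$ for all $t\in I$; and these alternatives exclude one another, again because $\lambda_-<\lambda$. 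This is the stated dichotomy.

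Finally I would upgrade each alternative to its full conclusion. In the second there is nothing more to prove. In the first, $\eta(t)\le\lambda_-$ is exactly the upper bound $\eta(t)^2\le\lambda_-^2$, while the left-hand inequality in \eqref{E:energy1}, $3\eta(t)^2\ge\mu$, gives $\eta(t)^2\ge\frac13\mu$; moreover $\eta(t)\le\lambda_-<1$ together with $\|u(t)\|_{L^2}\equiv\|u_0\|_{L^2}$ bounds $\|\nabla u(t)\|_{L^2}$ uniformly over $I$, so the $H^1$ blow-up alternative stated just before the proposition ($T^*<\infty\implies\|\nabla u(t)\|_{L^2}\gtrsim(T^*-t)^{-1/4}$, and symmetrically at $T_*$) forces $T^*=+\infty$ and $T_*=-\infty$, i.e.\ $u$ is global. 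There is no real obstacle here: the argument is entirely soft, using only the intermediate value theorem (through connectedness of $\eta(I)$), conservation of mass, and the local blow-up criterion. The one place to be slightly careful is the case analysis on the sign of $\mu$ when locating the root(s) of $f(s)=\mu$ and verifying that $(\lambda_-,\lambda)$ is exactly the interval forbidden to $\eta$.
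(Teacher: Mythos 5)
Your proof is correct and follows essentially the same route the paper indicates: the paper does not reprint the argument but attributes it to the continuity of the flow together with the bounds \eqref{E:energy1} (citing Theorem 4.2 of \cite{HR}), which is exactly your combination of the sharp Gagliardo--Nirenberg energy-trapping inequality, the root structure of $3s^2-2s^3$, connectedness of $\eta(I)$, and the $H^1$ blow-up criterion to upgrade case (1) to globality. Nothing is missing.
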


Naturally, one can check the initial data (the value of $\eta(0)$)
to determine whether the solution is of the first or second type in
Prop. \ref{P:dichotomy}.  Note that the second case does not assert
finite-time blow-up (this is the subject of this paper).  In the
first case, we proved in \cite{HR}, \cite{DHR} that more holds.

\begin{theorem}[scattering]
\label{T:scattering} If $0<M[u]E[u]/M[Q]E[Q]<1$ and the first case
of Prop. \ref{P:dichotomy} holds, then $u(t)$ scatters as $t\to
+\infty$ or $t\to -\infty$.  This means that there exist $\psi_\pm
\in H^1$ such that
\begin{equation}
\label{E:asymp-linear}
\lim_{t\to \pm \infty} \|u(t)- e^{-it\Delta}\psi_\pm\|_{H^1} =0 \,.
\end{equation}
Consequently, we have that
\begin{equation}
\label{E:L4asymp}
\lim_{t\to \pm \infty} \|u(t)\|_{L^4} = 0
\end{equation}
and
\begin{equation}
\label{E:H1asymp}
\lim_{t\to \pm \infty} \eta(t)^2 =  \frac13 \cdot \frac{M[u]E[u]}{M[Q]E[Q]}\,.
\end{equation}
\end{theorem}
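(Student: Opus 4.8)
The plan is to establish the scattering statement \eqref{E:asymp-linear} by the concentration--compactness/rigidity method of Kenig--Merle \cite{KM}, and then extract \eqref{E:L4asymp} and \eqref{E:H1asymp} as soft consequences. The first reduction is the standard small-data and perturbation theory built on the Strichartz estimates: there is a space-time ``scattering norm'' $\|u\|_{S(I)}$ (a Strichartz norm at the $\dot H^{1/2}$ level together with its gradient version) such that (i) if $\|u_0\|_{H^1}$ is small then $u$ is global with $\|u\|_{S(\cR)}<\infty$, and (ii) for any forward-global solution, $\|u\|_{S([0,\infty))}<\infty$ implies the existence of $\psi_+\in H^1$ with $\|u(t)-e^{-it\Delta}\psi_+\|_{H^1}\to 0$, and symmetrically for $t\to-\infty$. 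It therefore suffices to prove that every solution falling under the first case of Prop. \ref{P:dichotomy} has finite scattering norm.

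Suppose not. The scale-invariant quantity $M[u]E[u]/M[Q]E[Q]\in(0,1)$ serves as the critical parameter; let $\delta_c$ be the infimum of those values $\delta\in(0,1)$ admitting a first-type solution with $M[u]E[u]/M[Q]E[Q]=\delta$ and infinite forward scattering norm. Small-data theory forces $\delta_c>0$, while the assumed failure of scattering somewhere in the range forces $\delta_c<1$. Choosing solutions $u_n$ whose mass-energies decrease to $\delta_c$ and whose forward scattering norms diverge, I apply the $H^1$ profile decomposition of Keraani \cite{K} (suitably adapted) to the bounded sequence $u_n(0)$. The nonlinear perturbation theory then forces all but one profile to be negligible: otherwise the mass-energies would split strictly below $\delta_c$, each resulting piece would scatter, and the perturbation lemma would reconstruct a finite scattering norm for $u_n$, a contradiction. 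The surviving profile generates a \emph{critical} solution $u_c$, still of the first type with $M[u_c]E[u_c]/M[Q]E[Q]=\delta_c$, having infinite forward scattering norm, and whose trajectory is precompact in $H^1$ modulo translations: there is $x(t)$ with $\{u_c(\cdot-x(t),t):t\ge0\}$ relatively compact in $H^1$.

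The rigidity step shows $u_c\equiv 0$, contradicting $\delta_c>0$. Precompactness yields uniform-in-time localization: for every $\epsilon>0$ there is $R$ with $\int_{|x-x(t)|>R}(|\nabla u_c|^2+|u_c|^2+|u_c|^4)\,dx<\epsilon$ for all $t\ge0$. The crucial coercivity is that in the first case $\eta(t)\le\lambda_-<1$, so the sharp Gagliardo--Nirenberg inequality \eqref{E:GN}--\eqref{E:cGN} gives $\|u_c\|_{L^4}^4\le\tfrac43\,\eta(t)\,\|\nabla u_c\|_{L^2}^2$ and hence
\[
8\|\nabla u_c(t)\|_{L^2}^2-6\|u_c(t)\|_{L^4}^4 \ \ge\ 8\bigl(1-\lambda_-\bigr)\|\nabla u_c(t)\|_{L^2}^2 \ \ge\ c_0>0,
\]
the last inequality because a nonzero precompact trajectory is bounded away from $0$ in $\dot H^1$. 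Feeding this into a spatially truncated, recentered virial functional $z_R(t)=\int R^2\phi\bigl((x-x(t))/R\bigr)|u_c|^2$, whose second derivative equals $8\|\nabla u_c\|_{L^2}^2-6\|u_c\|_{L^4}^4$ up to an error that the uniform localization makes $\le c_0/2$ for $R$ large, I obtain $z_R''(t)\ge c_0/2$ for all $t$, while $|z_R(t)|\lesssim R^2 M[u_c]$ stays bounded --- impossible. Hence $u_c=0$, so $\delta_c\ge 1$ and every first-type solution scatters.

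The main obstacle, and the genuinely nonradial difficulty, is that the truncation error in the virial also feels the motion of the center $x(t)$; before the recentered functional can be closed one must show $|x(t)|=o(t)$, using $P[u_c]=0$ (which we may assume via the Galilean reduction), exactly as in \cite{DHR}. Granting \eqref{E:asymp-linear}, equation \eqref{E:L4asymp} follows since $\|e^{-it\Delta}\psi_\pm\|_{L^4}\to 0$ by the dispersive estimate and $\|u(t)\|_{L^4}\le\|u(t)-e^{-it\Delta}\psi_\pm\|_{L^4}+\|e^{-it\Delta}\psi_\pm\|_{L^4}\to 0$ through the Sobolev embedding $H^1\hookrightarrow L^4$. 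Finally, energy conservation gives $\tfrac12\|\nabla u(t)\|_{L^2}^2=E[u]+\tfrac14\|u(t)\|_{L^4}^4\to E[u]$; combining this with the conserved $M[u]=\|u\|_{L^2}^2$ and the identity $\|\nabla Q\|_{L^2}^2=6E[Q]$ from \eqref{E:ME} yields $\eta(t)^2\to\frac{2M[u]E[u]}{6M[Q]E[Q]}=\frac13\cdot\frac{M[u]E[u]}{M[Q]E[Q]}$, which is \eqref{E:H1asymp}.
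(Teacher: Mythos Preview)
Your proposal is correct and follows exactly the concentration--compactness/rigidity scheme of \cite{HR}, \cite{DHR} that the paper invokes for \eqref{E:asymp-linear}, and your derivation of \eqref{E:L4asymp}--\eqref{E:H1asymp} coincides with the paper's own justification. One technical slip worth fixing: the localized virial functional should be centered at a \emph{fixed} point with radius $R=R(T)$ chosen using $|x(t)|=o(t)$, not recentered at the moving $x(t)$ as you wrote, since differentiating $\int R^2\phi\bigl((x-x(t))/R\bigr)|u_c|^2\,dx$ in time produces uncontrolled (indeed, a priori undefined) $\dot x(t)$ terms---this is precisely why the $|x(t)|=o(t)$ control you cite from \cite{DHR} is needed.
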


Let us justify \eqref{E:L4asymp}--\eqref{E:H1asymp} since they are
not mentioned in \cite{HR}, \cite{DHR}.  By \eqref{E:asymp-linear},
the Gagliardo-Nirenberg inequality, and mass conservation for the
linear and nonlinear flows, we have
$$
\lim_{t\to \pm \infty} \|u(t)- e^{-it\Delta}\psi_\pm\|_{L^4} =0 \,.
$$
The statement in \eqref{E:L4asymp} then follows by the linear decay
estimate $\|e^{-it\Delta}\psi \|_{L^4} \leq t^{-3/4}
\|\psi\|_{L^{4/3}}$ and an approximation argument (to deal with the
fact that $\psi\notin L^{4/3}$).\footnote{As a result of this
approximation argument, we lose the quantitative estimate of
$t^{-3/4}$ on the rate of decay.} By \eqref{E:L4asymp}, we have
$$
\lim_{t\to \pm \infty} \|\nabla u(t)\|_{L^2}^2 = 2E[u] +
\frac12\lim_{t\to\pm \infty} \|u(t)\|_{L^4}^4 = 2E[u] \,.
$$
Multiply by $M[u]/M[Q]E[Q]$ and use the Pohozhaev identities to
obtain \eqref{E:H1asymp}.

\section{Virial identity and blow-up conditions}
\label{S:virial}

Now we turn our attention to the second case of Prop.
\ref{P:dichotomy}.  We begin by giving the classical derivation,
using the virial identity, of the upper bound on the (finite)
blow-up time under the finite variance hypothesis.

\begin{proposition}[Finite-variance blow-up time]
 \label{P:blowup-time}
Let $M[u]=M[Q]$ and $E[u]/E[Q]<1$ and suppose that the second case
of Prop. \ref{P:dichotomy} holds (take $\lambda>1$ to be as defined
in \eqref{E:energy-lambda}). Define $r(t)$ to be the scaled
variance:
$$
r(t) =  \frac{\|xu\|_{L^2}^2}{48 E[Q] \lambda^2(\lambda-1)} \,.
$$
Then blow-up occurs in forward time before $t_b$ (i.e., $T^* \leq
t_b$), where
$$
t_b = r'(0) + \sqrt{r'(0)^2 + 2r(0)}.
$$
\end{proposition}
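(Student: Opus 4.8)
The plan is to run the classical convexity-in-time argument for the variance, with the lower bound on the gradient now supplied by the dichotomy hypothesis rather than by negativity of the energy. Set $V(t) = \|xu(t)\|_{L^2}^2$. Since $\|xu_0\|_{L^2}<\infty$, the standard local theory guarantees that $V(t)$ is finite and $C^2$ on $(T_*,T^*)$, and (recalling that $P[u]=0$ throughout this paper) the virial identity gives
$$V''(t) = 24E[u] - 4\|\nabla u(t)\|_{L^2}^2 \,.$$

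Next I would feed in the two facts that make $V''$ uniformly negative. Because the second case of Prop.~\ref{P:dichotomy} holds, $\eta(t) \geq \lambda$ for every $t\in(T_*,T^*)$; combined with the normalization $M[u]=M[Q]$ and the definition \eqref{E:eta-def} of $\eta$, this gives $\|\nabla u(t)\|_{L^2}^2 \geq \lambda^2\|\nabla Q\|_{L^2}^2$, which by \eqref{E:ME} becomes $\|\nabla u(t)\|_{L^2}^2 \geq 6\lambda^2 E[Q]$. Also, since $\lambda$ solves \eqref{E:energy-lambda} and $M[u]=M[Q]$, we have $E[u] = (3\lambda^2-2\lambda^3)E[Q]$. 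Substituting both into the virial identity,
$$V''(t) \leq 24(3\lambda^2-2\lambda^3)E[Q] - 24\lambda^2 E[Q] = -48E[Q]\,\lambda^2(\lambda-1) \,.$$
As $E[Q] = \frac16\|\nabla Q\|_{L^2}^2 > 0$ and $\lambda>1$, the right-hand side is a strictly negative constant $-c$ with $c = 48 E[Q]\lambda^2(\lambda-1)$ --- exactly the normalizing constant in the definition of $r(t)$ in the statement. Hence $r(t) = V(t)/c$ satisfies $r(t)\geq 0$ and $r''(t) \leq -1$ on $(T_*,T^*)$.

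Finally I would integrate the differential inequality twice on $[0,T^*)$ to get $r(t) \leq r(0) + r'(0)\,t - \frac12 t^2$. Since $r(t)\geq 0$ and $r(0) = \|xu_0\|_{L^2}^2/c > 0$ (as $u_0\not\equiv 0$), the polynomial $\frac12 t^2 - r'(0)\,t - r(0)$ must be $\leq 0$ for all $t\in[0,T^*)$; its larger root is $t_b = r'(0) + \sqrt{r'(0)^2 + 2r(0)} > 0$, so if $T^*>t_b$ the bound would force $r(t)<0$ for $t\in(t_b,T^*)$, a contradiction. Therefore $T^*\leq t_b$.

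There is no genuine obstacle here --- the argument is entirely classical --- and the only points needing care are (i) the standard justification that $V(t)$ stays finite and twice differentiable for finite-variance $H^1$ data (via truncation/regularization), and (ii) the bookkeeping confirming that the constant produced by the virial estimate is precisely $48E[Q]\lambda^2(\lambda-1)$ and that $E[Q]>0$, so that the inequality points in the right direction.
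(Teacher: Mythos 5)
Your argument is correct and is essentially the paper's own proof: the same virial identity, the same two inputs ($\eta(t)\geq\lambda$ from the dichotomy and $E[u]/E[Q]=3\lambda^2-2\lambda^3$), yielding $r''\leq -1$ and the quadratic upper bound whose positive root is $t_b$. The only cosmetic difference is that you substitute numerical values directly into $V''$ while the paper first rewrites everything in terms of $\eta(t)$ and the Pohozhaev identities; the constants agree.
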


Note that
$$
r(0) = \frac{1}{48 E[Q] \lambda^2(\lambda-1)} \|xu_0\|_{L^2}^2
$$
and
$$
r'(0) = \frac{1}{12 E[Q] \lambda^2(\lambda-1)} \I \int (x\cdot
\nabla u_0) \; \bar u_0.
$$
As we remarked in the introduction, we feel that the dependence of
$t_b$ on $r'(0)$ (or ideally a spatially truncated version of it) is
quite natural, but the dependence on $r(0)$ seems unsubstantiated,
placing a very strong weight on the size of the solution at spatial
infinity.

\begin{proof}
The virial identity gives
$$
r''(t) = \frac{1}{48E[Q]\lambda^2(\lambda-1)}(24E[u]-4\|\nabla
u\|_{L^2}^2).
$$
By the Pohozhaev identities,
$$
r''(t) = \frac{1}{2\lambda^2(\lambda-1)}\left( \frac{E[u]}{E[Q]} -
\frac{\|\nabla u\|_{L^2}^2}{\|\nabla Q\|_{L^2}^2}\right) \,.
$$
By definition of $\lambda$ and $\eta$,
$$
r''(t) = \frac{1}{2\lambda^2(\lambda-1)}(3\lambda^2-2\lambda^3-\eta(t)^2) \,.
$$
Since $\eta(t) \geq \lambda$ (and $\lambda > 1$), we have
$$
r''(t) \leq -1 \,.
$$
Integrating in time twice gives
$$
r(t) \leq -\frac12 t^2 + r'(0)t + r(0) \,.
$$
The positive root of the polynomial on the right-hand side is $t_b$
as given in the proposition statement.
\end{proof}

We next review the local virial identity.
Let $\varphi \in C_c^\infty(\mathbb{R}^3)$ be radial such that
$$
\varphi(x) =
\begin{cases} |x|^2 &\text{for }|x|\leq 1 \\ 0 &\text{for }|x|\geq 2
\end{cases} .
$$
For $R>0$ define
\begin{equation}
 \label{E:locvar}
z_R(t) = \int R^2 \varphi\left(\frac{x}{R}\right)|u(x,t)|^2 \, dx.
\end{equation}
Then direct calculation gives the local virial identity:
\begin{equation}
 \label{E:locvar''}
z''_R(t) = 4\int \partial_j\partial_k \psi\Big( \frac{x}{R} \Big) \;
\partial_ju \, \partial_k\bar u - \int \Delta\psi \Big( \frac{x}{R}
\Big) \, |u|^4 - \frac{1}{R^2}\int \Delta^2\psi\Big( \frac{x}{R}
\Big) \, |u|^2 \,.
\end{equation}
Note that
$$
z_R''(t) = (24E[u]-4\|\nabla u(t)\|_{L^2}^2) + A_R(u(t)),
$$
where, for suitable $\varphi$ \footnote{Note that in the upper bound
we do not need the term $\|\nabla u\|_{L^2(|x|\geq R)}^2$.  This
term was needed in the \emph{lower} bound that was applied in the
proof of the scattering theorem \cite{DHR}.}
\begin{equation}
 \label{E:ARestimate}
A_R(u(t)) \lesssim  \frac{1}{R^2}
\|u\|_{L^2(|x|\geq R)}^2 + \|u\|_{L^4(|x|\geq R)}^4.
\end{equation}

Using the local virial identity, we can prove a version of Prop.
\ref{P:blowup-time}, valid without the assumption of finite variance
but assuming that the solution is suitably localized in $H^1$ for
all times.  Define
$$
\eta_{\,\geq R}(t) = \frac{\|u\|_{L^2(|x|\geq R)}\|\nabla u
\|_{L^2(|x|\geq R)}}{\|Q\|_{L^2} \|\nabla Q\|_{L^2}} \,.
$$

\begin{proposition}[Blow-up time for \textit{a priori} localized solutions]
 \label{P:local-blowup-time}
Let $M[u]=M[Q]$ and $E[u]<E[Q]$ and suppose that the second case of
Prop. \ref{P:dichotomy} holds (take $\lambda>1$ to be as defined in
\eqref{E:energy-lambda}). Select $\gamma$ such that
$0<\gamma<\min(\lambda-1,\gamma_0)$, where $\gamma_0$ is an absolute
constant. Suppose that there is a radius $R\gtrsim \gamma^{-1/2}$
such that for all $t$, there holds $\eta_{\,\geq R}(t) \lesssim
\gamma$. Define $\tilde r(t)$ to be the scaled local variance:
$$
\tilde r(t) =  \frac{z_R(t)}{48 E[Q] \lambda^2(\lambda-1-\gamma)} \,.
$$
Then blow-up occurs in forward time before $t_b$ (i.e., $T^* \leq
t_b$), where
$$
t_b = \tilde r'(0) + \sqrt{\tilde r'(0)^2 + 2\tilde r(0)} \,.
$$
\end{proposition}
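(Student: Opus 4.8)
The plan is to follow the proof of Proposition~\ref{P:blowup-time} verbatim, with the globally defined variance replaced by the localized quantity $z_R(t)$ of \eqref{E:locvar}, and with the \emph{a priori} localization hypothesis $\eta_{\,\geq R}(t)\lesssim\gamma$ used to absorb the error term $A_R(u(t))$ in \eqref{E:ARestimate}. Concretely, I will show $\tilde r''(t)\leq -1$ on the whole interval of existence; then, integrating twice, $\tilde r(t)\leq -\frac12 t^2+\tilde r'(0)t+\tilde r(0)$, while $\tilde r(t)\geq 0$ since $\varphi\geq 0$ (so $z_R(t)\geq 0$) and $48E[Q]\lambda^2(\lambda-1-\gamma)>0$ (as $\gamma<\lambda-1$); hence $[0,T^*)$ cannot reach past the positive root $t_b$ of the right-hand side, i.e.\ $T^*\leq t_b$.

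For the main term of $z_R''(t)=(24E[u]-4\|\nabla u(t)\|_{L^2}^2)+A_R(u(t))$, the Pohozhaev identity \eqref{E:ME}, the normalization $M[u]=M[Q]$, and the definitions \eqref{E:energy-lambda}, \eqref{E:eta-def} give, exactly as in the proof of Proposition~\ref{P:blowup-time},
\begin{equation*}
24E[u]-4\|\nabla u(t)\|_{L^2}^2 = 24E[Q]\bigl(3\lambda^2-2\lambda^3-\eta(t)^2\bigr),\qquad \|\nabla u(t)\|_{L^2}^2=6E[Q]\,\eta(t)^2 .
\end{equation*}
For the error term I would use \eqref{E:ARestimate}: the piece $R^{-2}\|u\|_{L^2(|x|\geq R)}^2\leq R^{-2}M[Q]\lesssim\gamma M[Q]$ by $R\gtrsim\gamma^{-1/2}$, while for the quartic piece I apply the Gagliardo--Nirenberg inequality \eqref{E:GN} on $\{|x|\geq R\}$ and peel off the factor governed by the localization hypothesis,
\begin{equation*}
\|u\|_{L^4(|x|\geq R)}^4\lesssim\|u\|_{L^2(|x|\geq R)}\|\nabla u\|_{L^2(|x|\geq R)}^3 =\bigl(\|u\|_{L^2(|x|\geq R)}\|\nabla u\|_{L^2(|x|\geq R)}\bigr)\|\nabla u\|_{L^2(|x|\geq R)}^2 \lesssim\gamma\,\|\nabla u(t)\|_{L^2}^2 ,
\end{equation*}
where the last step uses $\|u\|_{L^2(|x|\geq R)}\|\nabla u\|_{L^2(|x|\geq R)}=\eta_{\,\geq R}(t)\|Q\|_{L^2}\|\nabla Q\|_{L^2}\lesssim\gamma\|Q\|_{L^2}\|\nabla Q\|_{L^2}$ together with the trivial bound $\|\nabla u\|_{L^2(|x|\geq R)}\leq\|\nabla u\|_{L^2}$. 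Thus $A_R(u(t))\lesssim\gamma M[Q]+\gamma\|\nabla u(t)\|_{L^2}^2$.

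Combining these, and using $\eta(t)\geq\lambda$ from the second case of Proposition~\ref{P:dichotomy}, for an absolute constant $C$ I get
\begin{equation*}
z_R''(t)\leq 24E[Q]\bigl(3\lambda^2-2\lambda^3\bigr)-(24-C\gamma)E[Q]\,\eta(t)^2+C\gamma M[Q]\leq -48E[Q]\lambda^2(\lambda-1)+C\gamma\bigl(E[Q]\lambda^2+M[Q]\bigr),
\end{equation*}
provided $\gamma<\gamma_0$ is small enough that $24-C\gamma>0$. Since $\lambda>1$, the right-hand side is $\leq -48E[Q]\lambda^2(\lambda-1-\gamma)$ once $C$ — which we may shrink by choosing the truncation $\varphi$ in \eqref{E:locvar}--\eqref{E:ARestimate} well, by taking the implied constant in $R\gtrsim\gamma^{-1/2}$ large, and by taking the implied constant in $\eta_{\,\geq R}(t)\lesssim\gamma$ small — is suitably controlled, and $\gamma_0$ is small. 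This is $\tilde r''(t)\leq -1$, and the conclusion follows as in the first paragraph.

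The hard part will be exactly this constant accounting. The error $A_R(u(t))$ is only controlled by $\gamma$ times $\|\nabla u(t)\|_{L^2}^2$ — a quantity that may tend to $\infty$ along the flow — so one must verify that it is genuinely absorbed by the deficit $24E[Q](\eta(t)^2-\lambda^2)+48E[Q]\lambda^2\gamma$ left over in the main term after using $\eta(t)\ge\lambda$; this forces one to exploit simultaneously the full strength of $R\gtrsim\gamma^{-1/2}$, of the smallness $\eta_{\,\geq R}(t)\lesssim\gamma$ (recall that the only available bound on $\|\nabla u\|_{L^2(|x|\geq R)}$ is the trivial $\le\|\nabla u\|_{L^2}$), and of $\gamma_0$, together with the freedom in the profile $\varphi$ — this is what the phrases ``suitable $\varphi$'' in \eqref{E:ARestimate} and ``absolute constant $\gamma_0$'' refer to. A minor technical nuisance is the Gagliardo--Nirenberg inequality on the exterior region $\{|x|\geq R\}$: one either inserts a smooth cutoff supported in a slightly larger annulus (so the hypothesis is really used at radii comparable to, rather than exactly, $R$) or appeals to a bounded-extension argument yielding an $R$-uniform constant.
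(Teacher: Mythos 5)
Your proposal is correct and follows essentially the same route as the paper: the local virial identity, the bound $R^{-2}\|u\|_{L^2(|x|\geq R)}^2\leq R^{-2}M[Q]\lesssim\gamma$, the exterior Gagliardo--Nirenberg estimate $\|u\|_{L^4(|x|\geq R)}^4\lesssim \eta_{\,\geq R}(t)\,\eta(t)^2\lesssim\gamma\,\eta(t)^2$ (your $\gamma\|\nabla u\|_{L^2}^2$ is the same quantity up to the factor $6E[Q]$), and then $\eta(t)\geq\lambda$ to absorb the error into the $\gamma$-shifted denominator and conclude $\tilde r''\leq -1$. Your closing remarks on the constant accounting and on the exterior Gagliardo--Nirenberg inequality correctly identify the points the paper leaves implicit in its use of $\lesssim$; no gap.
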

One could, in fact, define $\eta_{\,\geq R}(t) =
\|u(t)\|_{L^3(|x|\geq R)}$ and obtain the same statement with a
similar proof but a different Gagliardo-Nirenberg inequality.
\begin{proof}
By the local virial identity and the same steps used in the proof of Prop. \ref{P:blowup-time},
$$
\tilde r''(t) =
\frac{1}{2\lambda^2(\lambda-1-\gamma)}\left(3\lambda^2-2\lambda^3-\eta(t)^2\right)
+ \frac{A_R(u(t))}{48 E[Q] \lambda^2(\lambda-1-\gamma)}.
$$
By the estimates (the first one is the exterior version of
Gagliardo-Nirenberg)
\begin{equation}
 \label{E:GNoutside}
\|u\|_{L^4(|x|\geq R)}^4 \lesssim \|u\|_{L^2(|x|\geq R)} \|\nabla
u\|_{L^2(|x|\geq R)}^3  \lesssim \eta_{\,\geq R}(t) \, \eta(t)^2
\lesssim \gamma \, \eta(t)^2\,
\end{equation}
and
\begin{equation}
 \label{E:truncmass}
\frac{1}{R^2} \, \|u\|_{L^2(R\leq |x|\leq 2R)}^2 \leq \frac{1}{R^2}
M[Q] \lesssim \gamma \lesssim \gamma \,\eta(t)^2\,,
\end{equation}
applied to control the $A_R$ term, and using that $\eta(t)\geq
\lambda$, we obtain
$$
\tilde r''(t) \leq -1  \,.
$$
The remainder of the argument is the same as in the proof of Prop. \ref{P:blowup-time}.
\end{proof}

For comparison purposes, we review the quantified proof of
finite-time blow-up for \emph{radial} solutions presented in
\cite{HR}.

\begin{proposition}[Radial blow-up time]
 \label{P:radial-blowup-time}
Let $M[u]=M[Q]$ and $E[u]<E[Q]$ and suppose that the second case of
Prop. \ref{P:dichotomy} holds (take $\lambda>1$ to be as defined in
\eqref{E:energy-lambda}). Suppose that $u$ is radial.  Let
$$
R = c_2 \max\left( 1, \frac{1}{\lambda^{1/2}(\lambda-1)^{1/2}} \right)\,,
$$
where $c_2$ is an appropriately large, but absolute, constant.
Define $\tilde r(t)$ to be the scaled local variance:
$$
\tilde r(t) =  \frac{z_R(t)}{48 E[Q] \lambda^2(\lambda-1)} \,.
$$
Then blow-up occurs in forward time before $t_b$ (i.e., $T^* \leq
t_b$), where
$$
t_b = \tilde r'(0) + \sqrt{\tilde r'(0)^2 + 2\,\tilde r(0)} \,.
$$
We have that
$$
t_b \lesssim \, c_\lambda \left(1+r'(0)^2\right)^{1/2} \,.
$$
where $c_\lambda \nearrow \infty$ as $\lambda \searrow 1$ (i.e., as
$E[u]\nearrow E[Q]$).
\end{proposition}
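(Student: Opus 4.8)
The plan is to run the localized-virial scheme of Proposition \ref{P:local-blowup-time}, but with the \emph{a priori} localization hypothesis replaced by the radial Strauss inequality. First I would fix the cutoff: choose the radial $\varphi$ in \eqref{E:locvar} so that, in addition to $\varphi(x)=|x|^2$ on $|x|\le 1$ and $\supp\varphi$ compact, one has $\partial_j\partial_k\varphi\le 2\delta_{jk}$ everywhere (this is compatible with a nonnegative smooth compactly supported profile). With this choice the gradient part of $A_R$ in $z_R''=(24E[u]-4\|\nabla u\|_{L^2}^2)+A_R$ is nonpositive and may be discarded, leaving the one-sided bound \eqref{E:ARestimate}, namely $A_R(u(t))\lesssim R^{-2}\|u\|_{L^2(|x|\ge R)}^2+\|u\|_{L^4(|x|\ge R)}^4$, with no $\|\nabla u\|_{L^2(|x|\ge R)}^2$ present.

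Now the radial input. For radial $u$, the Strauss bound $\||x|\,u\|_{L^\infty}\lesssim\|u\|_{L^2}^{1/2}\|\nabla u\|_{L^2}^{1/2}$ gives $\|u\|_{L^4(|x|\ge R)}^4\le\|u\|_{L^\infty(|x|\ge R)}^2\|u\|_{L^2(|x|\ge R)}^2\lesssim R^{-2}\|u\|_{L^2}^3\|\nabla u\|_{L^2}$; together with $R^{-2}\|u\|_{L^2(|x|\ge R)}^2\le R^{-2}M[u]$ and the normalization $M[u]=M[Q]$ this yields $A_R(u(t))\lesssim R^{-2}M[Q]^{3/2}\|\nabla u(t)\|_{L^2}+R^{-2}M[Q]$. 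As in Proposition \ref{P:blowup-time}, the normalization together with the definition \eqref{E:energy-lambda} of $\lambda$ and \eqref{E:ME} gives $E[u]=E[Q](3\lambda^2-2\lambda^3)$ and $\|\nabla u\|_{L^2}^2=6E[Q]\,\eta(t)^2$, so that
\[
z_R''(t)\ \le\ -48E[Q]\lambda^2(\lambda-1)\ -\ 24E[Q]\,\big(\eta(t)^2-\lambda^2\big)\ +\ C\,R^{-2}M[Q]^{3/2}\|\nabla u\|_{L^2}\ +\ C\,R^{-2}M[Q],
\]
and $\eta(t)\ge\lambda$ in Case 2.

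The remaining step is to absorb the error. By Young's inequality with a parameter $\theta$ comparable to $\lambda-1$ (an absolute small multiple of $\lambda-1$ when $\lambda\le2$, a fixed small constant otherwise),
\[
C\,R^{-2}M[Q]^{3/2}\|\nabla u\|_{L^2}\ \le\ \theta\,\|\nabla u\|_{L^2}^2\ +\ C'\theta^{-1}R^{-4};
\]
since $\|\nabla u\|_{L^2}^2\ge 6E[Q]\lambda^2$, the term $\theta\|\nabla u\|_{L^2}^2$ consumes only an $O(\lambda^2(\lambda-1))$ portion of the gap $48E[Q]\lambda^2(\lambda-1)$, and the leftover remainders $\theta^{-1}R^{-4}+R^{-2}$ are themselves $O(\lambda^2(\lambda-1))$ precisely because $R\gtrsim(\lambda(\lambda-1))^{-1/2}$; hence choosing the absolute constant $c_2$ large makes their total at most a small fraction of $48E[Q]\lambda^2(\lambda-1)$. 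Collecting terms, $\tilde r''(t)\le -1-\frac{\eta(t)^2-\lambda^2}{2\lambda^2(\lambda-1)}\le -1$ (obtaining the clean constant $1$, rather than an absolute fraction, is a matter of using the sharp constant \eqref{E:cGN} and an optimized $\varphi$; with a fraction $c_0\in(0,1)$ in place of $1$ the argument still produces finite-time blow-up before a $t_b$ of the same form). Since $\tilde r=z_R/(48E[Q]\lambda^2(\lambda-1))\ge0$, integrating $\tilde r''\le-1$ twice gives $\tilde r(t)\le\tilde r(0)+\tilde r'(0)t-\tfrac12 t^2$, whose positive root is exactly the $t_b$ in the statement, forcing $T^*\le t_b$. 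For the quantitative bound, $z_R(0)\le\|\varphi\|_\infty R^2 M[Q]\lesssim \lambda^{-1}(\lambda-1)^{-1}$ for $\lambda$ near $1$, so $\tilde r(0)\lesssim\lambda^{-3}(\lambda-1)^{-2}=:c_\lambda^2$, while $\tilde r'(0)$ is a spatially truncated, scaled version of $\I\int(x\cdot\nabla u_0)\,\bar u_0$, of size $\lesssim|r'(0)|$; using $\sqrt{a+b}\le\sqrt a+\sqrt b$ and $c_\lambda\ge1$ yields $t_b\lesssim c_\lambda(1+r'(0)^2)^{1/2}$ with $c_\lambda\nearrow\infty$ as $\lambda\searrow1$.

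I expect the main obstacle to be precisely the regime where $\eta(t)$ is close to $\lambda$: there the favorable quadratic term $-24E[Q](\eta(t)^2-\lambda^2)$ essentially vanishes, so $A_R(u(t))$ must be controlled against the \emph{fixed} deficit $48E[Q]\lambda^2(\lambda-1)$ with an absolute implied constant, and this is what dictates the scaling $R\sim(\lambda(\lambda-1))^{-1/2}$ and the $\theta\sim\lambda-1$ tuning above, and is the source of the blow-up of $c_\lambda$ as $\lambda\to1$. Alternatively this regime can be closed through the variational characterization of $Q$ from \S\ref{S:varQ}: $\eta(t)\approx\lambda$ forces the radial $u(t)$ to be $H^1$-close to a concentrated rescaled ground state $\mu^{3/2}Q(\mu\,\cdot)$ with $\mu\gtrsim\lambda$, so that the exterior quantities entering $A_R$ are exponentially small in $\lambda R$, which $\to\infty$ with the above choice of $R$.
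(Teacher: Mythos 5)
Your proposal is correct and follows essentially the same route as the paper: the localized virial identity combined with the radial Strauss/Gagliardo--Nirenberg bound $\|u\|_{L^4(|x|\ge R)}^4\lesssim R^{-2}\|u\|_{L^2}^3\|\nabla u\|_{L^2}$, the choice $R\gtrsim (\lambda(\lambda-1))^{-1/2}$, and the conclusion $\tilde r''(t)\le -c_0$ for an absolute $c_0>0$. The only cosmetic difference is that you absorb the error term linear in $\|\nabla u\|_{L^2}$ via Young's inequality with a tuned parameter $\theta\sim\lambda-1$, whereas the paper writes the same quantity as $\eta(t)\bigl(\eta(t)-c_1/R^2\bigr)$ and uses its monotonicity in $\eta\ge\lambda$; both give the same negative upper bound on $\tilde r''$ and the same $t_b$.
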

\begin{proof}
We modify the proof of Prop. \ref{P:local-blowup-time} only in
\eqref{E:GNoutside} and \eqref{E:truncmass} by using the radial
Gagliardo-Nirenberg inequality \cite{S} instead of
\eqref{E:GNoutside}
$$
\|u\|_{L^4(|x|\geq R)}^4 \lesssim \frac{1}{R^2} \|u\|_{L^2(|x|\geq
R)}^3 \|\nabla u\|_{L^2(|x|\geq R)}  \lesssim \frac{\eta(t)}{R^2}\,
$$
and also
$$
\frac{1}{R^2} \|u\|_{L^2(R\leq |x|\leq 2R)}^2 \lesssim \frac{1}{R^2}
\lesssim \frac{\eta(t)}{R^2}.
$$
Then we have, for some absolute constant $c_1$,
$$
\tilde r''(t) \leq \frac{1}{2\lambda^2(\lambda-1)}\left(
3\lambda^2-2\lambda^3-\eta(t)\left(\eta(t) -
\frac{c_1}{R^2}\right)\right) \,.
$$
We require that $R$ is large enough so that $c_1/R^2 \leq 1$. Since
$\eta(\eta- c_1/R^2)$ increases as $\eta \geq 1$ increases, and
$\eta\geq \lambda$, we have
$$
\eta(t)\left(\eta(t)-\frac{c_1}{R^2}\right) \geq
\lambda\left(\lambda - \frac{c_1}{R^2}\right) \,.
$$
This gives
$$
\tilde r''(t) \leq - \frac{1}{(\lambda-1)}\left(\lambda -1 -
c_1\frac{1}{2 \lambda \,R^2}\right) = -1+
\frac{c_1}{2\lambda(\lambda-1) R^2}.
$$
The restriction on $R$ in the proposition statement is such that
$$
\frac{c_1}{2\lambda (\lambda-1) R^2} \leq \frac12,
$$
from which it follows that
$$
r''(t)\leq -\frac12.
$$
The remainder of the argument is the same as in the proof of Prop.
\ref{P:blowup-time}.
\end{proof}

\section{Variational characterization of the ground state}
\label{S:varQ}

For now, write $u=u(x)$ (time dependence plays no role) in what
follows in this section.  The goal of this section is a variational
characterization of the ground state $Q$ stated below as Prop.
\ref{P:varQ}.  For the proof we will just show how it follows from
scaling, the bounds depicted in Figure \ref{F:dichotomy}, and an
existing characterization of $Q$ appearing in Lions \cite[Theorem
I.2]{L84}.  Prop. \ref{P:varQ} will be one of the main ingredients
in our treatment of the ``near boundary case'' in \S
\ref{S:near-boundary}.

\begin{proposition}[Variational characterization of the ground state]
 \label{P:varQ}
There exists a function $\epsilon(\rho)$ with $\epsilon(\rho)\to 0$
as $\rho\to 0$ such that the following holds: Suppose there is
$\lambda>0$ such that
\begin{equation}
\label{E:MEclose} \left| \frac{M[u]E[u]}{M[Q]E[Q]} -
(3\lambda^2-2\lambda^3) \right| \leq \rho \lambda^3\, ,
\end{equation}
and
\begin{equation}
\label{E:L2Gclose} \left| \frac{\|u\|_{L^2}\|\nabla
u\|_{L^2}}{\|Q\|_{L^2}\|\nabla Q\|_{L^2}} - \lambda \right| \leq
\rho \begin{cases} \lambda^2 & \text{if }\lambda \leq 1 \\ \lambda &
\text{if } \lambda \geq 1 \end{cases} \,.
\end{equation}
Then there exists $\theta\in \mathbb{R}$ and $x_0\in \mathbb{R}^3$
such that
\begin{equation}
\label{E:L2char2} \|u(x) -
e^{i\theta}\lambda^{3/2}\beta^{-1}Q(\lambda( \beta^{-1} x - x_0))
\|_{L_x^2} \leq \beta^{1/2}\epsilon(\rho)
\end{equation}
and
\begin{equation}
\label{E:H1char2} \left\|\nabla \big[ u(x) -
e^{i\theta}\lambda^{3/2}\beta^{-1}Q(\lambda( \beta^{-1}x - x_0))
\big] \right\|_{L_x^2} \leq \lambda \beta^{-1/2}\epsilon(\rho),
\end{equation}
where $\beta = M[u]/M[Q]$.
\end{proposition}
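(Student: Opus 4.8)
The plan is to reduce Proposition \ref{P:varQ} to the concentration-compactness characterization of $Q$ from Lions \cite[Theorem I.2]{L84}, which states (in scale-fixed form) that any minimizing sequence for the Gagliardo-Nirenberg quotient converges, after translation and scaling, to $Q$ in $H^1$. First I would exploit the scaling and gauge invariances of all the quantities involved: if $u$ satisfies \eqref{E:MEclose} and \eqref{E:L2Gclose} with parameter $\lambda$, then the rescaled function $v(x) = \beta^{1/2}\lambda^{-3/2} u(\beta x)$ (with $\beta = M[u]/M[Q]$) satisfies $M[v]=M[Q]$, and a direct computation shows that $M[v]E[v]/M[Q]E[Q]$ differs from $3\lambda^2-2\lambda^3$ by at most $\rho\lambda^3$ while $\|v\|_{L^2}\|\nabla v\|_{L^2}/(\|Q\|_{L^2}\|\nabla Q\|_{L^2})$ differs from $\lambda$ by the right $\rho$-controlled amount. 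Thus it suffices to prove the statement for functions with $M[u]=M[Q]$, in which case $\beta=1$ and the claimed conclusion becomes $\|u - e^{i\theta}\lambda^{3/2}Q(\lambda(x-x_0))\|_{L^2} \le \epsilon(\rho)$ together with the gradient version.

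Next I would set up a compactness/contradiction argument. Suppose the conclusion fails: there is $\rho_n\to 0$ and a sequence $u_n$ with $M[u_n]=M[Q]$ satisfying \eqref{E:MEclose}--\eqref{E:L2Gclose} with $\lambda=\lambda_n$ but with $\inf_{\theta,x_0}\|u_n - e^{i\theta}\lambda_n^{3/2}Q(\lambda_n(x-x_0))\|_{L^2} \ge \delta_0 > 0$ (or the analogous gradient bound fails). The two hypotheses pin down, in the limit, the value of $\eta = \|u_n\|_{L^2}\|\nabla u_n\|_{L^2}/(\|Q\|_2\|\nabla Q\|_2)$: since $\eta_n \to \lambda_\infty$ and $M[u_n]E[u_n]/M[Q]E[Q]\to 3\lambda_\infty^2 - 2\lambda_\infty^3$, equality is forced in the chain \eqref{E:energy1} in the limit, because the right inequality $3\eta^2 - 2\eta^3 \le M[u]E[u]/M[Q]E[Q]$ comes from the sharp Gagliardo-Nirenberg inequality \eqref{E:GN}. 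Asymptotic equality in \eqref{E:GN} means $v_n := \lambda_n^{-3/2} u_n(\lambda_n^{-1}\,\cdot\,)$ (rescaled further so that $\|\nabla v_n\|_{L^2} = \|\nabla Q\|_{L^2}$, using the second hypothesis to see $\|\nabla u_n\|_{L^2}$ converges to the right value) is a minimizing sequence for the Gagliardo-Nirenberg functional. By Lions' theorem, $v_n \to Q$ in $H^1$ after translation and gauge, which contradicts the assumed lower bound. Undoing the scalings gives the desired $\epsilon(\rho)$ (defined as the supremum of the defect over all $u$ satisfying the hypotheses with that $\rho$, which by the above argument tends to $0$).

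The main obstacle is bookkeeping rather than conceptual: one must track carefully how the two slightly different tolerances in \eqref{E:L2Gclose} (namely $\rho\lambda^2$ for $\lambda\le 1$ versus $\rho\lambda$ for $\lambda\ge 1$) and the tolerance $\rho\lambda^3$ in \eqref{E:MEclose} interact so that, after the scaling $u \mapsto v$ that normalizes $\|\nabla v\|_{L^2}$, the function $v_n$ is genuinely a minimizing sequence for \eqref{E:GN} — i.e. that $\|v_n\|_{L^4}^4 / (\|v_n\|_{L^2}\|\nabla v_n\|_{L^2}^3) \to c_{GN}$. The point is that $E[u] = \frac12\|\nabla u\|_{L^2}^2 - \frac14\|u\|_{L^4}^4$, so controlling $M[u]E[u]$ and $\|u\|_{L^2}\|\nabla u\|_{L^2}$ simultaneously controls $\|u\|_{L^4}^4$; the two regimes $\lambda\le 1$ and $\lambda\ge 1$ must be handled so that the relative error in the Gagliardo-Nirenberg quotient is $O(\epsilon(\rho))$ uniformly. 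A secondary point is deriving \eqref{E:H1char2} from \eqref{E:L2char2}: $H^1$ convergence of $v_n$ to $Q$ from Lions already gives the gradient estimate directly (that is why I invoke full $H^1$ convergence rather than only $L^2$), so once $v_n\to Q$ in $H^1$ is established, both \eqref{E:L2char2} and \eqref{E:H1char2} follow by undoing the explicit scaling, with the $\lambda$ and $\beta$ weights on the right-hand sides being exactly the Jacobian factors from the change of variables $x \mapsto \lambda(\beta^{-1}x - x_0)$ applied to $\|\cdot\|_{L^2}$ and $\|\nabla\cdot\|_{L^2}$ respectively.
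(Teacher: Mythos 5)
Your proposal is correct and follows essentially the same route as the paper: rescale to normalize the mass and the parameter $\lambda$, observe that the hypotheses on $M[u]E[u]$ and $\|u\|_{L^2}\|\nabla u\|_{L^2}$ together pin down $\|u\|_{L^4}^4$ to within $O(\rho)$ of $\|Q\|_{L^4}^4$ after rescaling, and invoke Lions' characterization of $Q$ — the only cosmetic difference being that the paper cites the quantitative form of Lions' Theorem I.2 directly (closeness of the $L^2$, $L^4$, and $\dot H^1$ norms implies $H^1$-closeness to $Q$ modulo translation and phase), whereas you re-derive that quantitative statement from the qualitative minimizing-sequence version via a compactness/contradiction argument. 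One small slip: your normalization $v(x)=\beta^{1/2}\lambda^{-3/2}u(\beta x)$ does not satisfy $M[v]=M[Q]$ — the correct mass-normalizing dilation is $v(x)=\beta u(\beta x)$, followed by $\tilde u(x)=\lambda^{-3/2}v(\lambda^{-1}x)$ — but this is precisely the bookkeeping you flag and does not affect the substance of the argument.
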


\begin{remark}
Note that the right-hand side bounds in \eqref{E:MEclose} and
\eqref{E:L2Gclose} do not depend on the mass.  Moreover, the
conclusion \eqref{E:L2char2} and \eqref{E:H1char2} could be replaced
with the weaker statement
$$
\|u(x) - e^{i\theta} \lambda^{3/2}\beta^{-1}Q(\lambda( \beta^{-1} x
- x_0)) \|_{L_x^2}\left\|\nabla \big[ u(x) -
e^{i\theta}\lambda^{3/2}\beta^{-1}Q(\lambda( \beta^{-1}x - x_0))
\big] \right\|_{L_x^2} \leq \epsilon(\rho),
$$
which also has a right-hand side independent of the mass.
\end{remark}

\begin{remark}
\label{R:rescale} Define $v(x) = \beta u(\beta x)$ and note that
$M[v] = \beta^{-1} M[u] = M[Q]$. Now we can restate Proposition
\ref{P:varQ} as follows:

Suppose $\|v\|_{L^2}=\|Q\|_{L^2}$ and there is $\lambda>0$ such that
\begin{equation}
 \label{E:energy-close}
\left| \frac{E[v]}{E[Q]} - (3\lambda^2-2\lambda^3) \right| \leq
\lambda^3 \rho  \, ,
\end{equation}
and
\begin{equation}
 \label{E:gradient-close}
\left| \frac{\|\nabla v\|_{L^2}}{\|\nabla Q\|_{L^2}} - \lambda
\right|
\leq  \rho \begin{cases} \lambda^2 & \text{if }\lambda \leq 1 \\
\lambda & \text{if }\lambda \geq 1 \end{cases}\,.
\end{equation}
Then there exists $\theta \in \mathbb{R}$ and $x_0\in \mathbb{R}^3$
such that
\begin{equation}
 \label{E:L2char}
\|v - e^{i\theta}\lambda^{3/2}Q(\lambda( \bullet - x_0)) \|_{L^2}
\leq \epsilon(\rho)
\end{equation}
and
\begin{equation}
 \label{E:H1char}
\left\|\nabla \big[ v - e^{i\theta}\lambda^{3/2}Q(\lambda( \bullet -
x_0)) \big] \right\|_{L^2} \leq \lambda \,\epsilon(\rho).
\end{equation}
In fact, Prop. \ref{P:varQ} is equivalent to the above scaled
statement.
\end{remark}

We first restate the result from Lions \cite[Theorem I.2]{L84} below
as Prop. \ref{P:ccQ} and then show how the proof of Prop.
\ref{P:varQ} follows from Prop. \ref{P:ccQ}.

\begin{proposition}{\cite[Theorem I.2]{L84}}
 \label{P:ccQ}
There exists a function $\epsilon(\rho)$, defined for small $\rho>0$
and such that $\lim\limits_{\rho\rightarrow 0} \epsilon(\rho)=0$,
such that for all $u \in H^1$ with
\begin{equation}
 \label{E:close}
\Big|\|u\|_4-\|Q\|_4\Big|+\Big|\|u\|_2 - \|Q\|_2\Big| +
\Big|\|\nabla u\|_2-\|\nabla Q\|_2\Big|\leq \rho,
\end{equation}
there exist  %
$\ds \theta_0 \in \cR$ and $x_0 \in \cR^3$ such that
\begin{equation}
 \label{E:Qclose}
\left\| u - e^{i\theta_0}Q(\bullet -x_0)\right\|_{H^1} \leq
\epsilon(\rho).
\end{equation}
\end{proposition}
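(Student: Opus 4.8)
The final statement, \eqref{E:Qclose}, is the concentration–compactness characterization of the ground state due to Lions, and I would reprove it as a \emph{stability} statement for the sharp Gagliardo–Nirenberg inequality \eqref{E:GN}. The plan is to argue by contradiction: if the conclusion fails, there is $\epsilon_0>0$ and a sequence $(u_n)\subset H^1$ with $\big|\|u_n\|_4-\|Q\|_4\big|+\big|\|u_n\|_2-\|Q\|_2\big|+\big|\|\nabla u_n\|_2-\|\nabla Q\|_2\big|\to 0$ but
$$
\inf_{\theta_0\in\cR,\; x_0\in\cR^3}\big\| u_n - e^{i\theta_0}Q(\bullet-x_0)\big\|_{H^1}\geq \epsilon_0
$$
for all $n$. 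In particular $(u_n)$ is bounded in $H^1$, and since all three norms converge to those of $Q$, equality is attained asymptotically in \eqref{E:GN}; that is, $(u_n)$ is a minimizing sequence for the Weinstein functional $W[u]=\|u\|_2\|\nabla u\|_2^3/\|u\|_4^4$, whose infimum $1/c_{GN}$ is attained only on the symmetry orbit of $Q$ (Weinstein \cite{W83}).

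Next I would extract a concentration profile via Lions' concentration–compactness dichotomy applied to the $L^2$-densities $|u_n|^2$ (whose total mass converges to $\|Q\|_2^2$). Vanishing is excluded: if $\sup_y\int_{|x-y|\le 1}|u_n|^2\to 0$, then $u_n\to 0$ in $L^4$ by the vanishing lemma, contradicting $\|u_n\|_4\to\|Q\|_4>0$. Hence, after a subsequence and translations $y_n\in\cR^3$, the functions $v_n:=u_n(\bullet+y_n)$ carry $L^2$-mass bounded below on a fixed ball, so $v_n\rightharpoonup U$ weakly in $H^1$ with $U\neq 0$. The crucial step is to rule out dichotomy, namely that the mass splits into two bumps $V_1,V_2$ drifting infinitely far apart; here the redundancy of the three hypotheses is decisive, since such a splitting would make the pieces asymptotically orthogonal in $L^2$, $\dot H^1$, and $L^4$, forcing $\|V_1\|_2^2+\|V_2\|_2^2=\|Q\|_2^2$, $\|\nabla V_1\|_2^2+\|\nabla V_2\|_2^2=\|\nabla Q\|_2^2$, $\|V_1\|_4^4+\|V_2\|_4^4=\|Q\|_4^4$, and (via \eqref{E:GN} on each piece) would require both $V_i$ to be optimizers while violating the strict super-additivity of $(m,g)\mapsto m^{1/2}g^{3/2}$. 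This strict-super-additivity check is the main obstacle and the technical heart of the argument.

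Once dichotomy is excluded I obtain compactness: $v_n\to U$ strongly in $L^4$, so $\|U\|_4=\|Q\|_4$. By weak lower semicontinuity $\|U\|_2\le\|Q\|_2$ and $\|\nabla U\|_2\le\|\nabla Q\|_2$, while \eqref{E:GN}–\eqref{E:cGN} for $U$ give $\|Q\|_4^4=\|U\|_4^4\le c_{GN}\|U\|_2\|\nabla U\|_2^3\le c_{GN}\|Q\|_2\|\nabla Q\|_2^3=\|Q\|_4^4$. Equality throughout forces $\|U\|_2=\|Q\|_2$, $\|\nabla U\|_2=\|\nabla Q\|_2$, and that $U$ attains equality in \eqref{E:GN}; by Weinstein's classification of the optimizers as $e^{i\theta_0}a\,Q(b(\bullet-x_0))$, the two norm identities pin $a=b=1$, so $U=e^{i\theta_0}Q(\bullet-x_0)$.

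Finally I would upgrade to strong convergence. Since $\|v_n\|_2\to\|Q\|_2=\|U\|_2$ with $v_n\rightharpoonup U$ in the Hilbert space $L^2$, we get $v_n\to U$ strongly in $L^2$; likewise $\|\nabla v_n\|_2\to\|\nabla U\|_2$ with $\nabla v_n\rightharpoonup\nabla U$ yields $\nabla v_n\to\nabla U$ in $L^2$. Hence $v_n\to e^{i\theta_0}Q(\bullet-x_0)$ in $H^1$, i.e.\ $\|u_n-e^{i\theta_0}Q(\bullet-(x_0+y_n))\|_{H^1}\to 0$, contradicting the standing lower bound $\epsilon_0$. The delicate points are the dichotomy exclusion above and the care needed to pass from the concentration–compactness output to a single concentration center $y_n$; the remaining steps are the soft Hilbert-space fact that weak convergence together with norm convergence implies strong convergence.
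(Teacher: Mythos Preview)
The paper does not prove Proposition~\ref{P:ccQ}; it is quoted verbatim as \cite[Theorem I.2]{L84} and used as a black box in the proof of Proposition~\ref{P:varQ}. So there is no ``paper's proof'' to compare against---your proposal is an attempt to supply what the paper simply cites.

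That said, your sketch is essentially Lions' own argument (contradiction, concentration--compactness trichotomy on $|u_n|^2$, exclusion of vanishing via the $L^4$ lower bound, exclusion of dichotomy via strict super-additivity, then compactness and identification of the limit via Weinstein's classification of optimizers). Two small points of care: first, in the dichotomy alternative you only get $\|\nabla v_n^1\|_2^2+\|\nabla v_n^2\|_2^2\le \|\nabla u_n\|_2^2+o(1)$ (cutting may cost gradient), not equality; fortunately the inequality is the direction you need, and combined with the strict super-additivity
\[
m_1^{1/2}g_1^{3/2}+m_2^{1/2}g_2^{3/2}<(m_1+m_2)^{1/2}(g_1+g_2)^{3/2}
\]
(valid whenever both $(m_i,g_i)$ are nonzero, by Cauchy--Schwarz and $g_1^3+g_2^3<(g_1+g_2)^3$) the contradiction $\|Q\|_4^4<\|Q\|_4^4$ still follows. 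Second, your phrasing ``would require both $V_i$ to be optimizers'' is not quite the logic---you don't need the pieces to be optimizers, only to apply \eqref{E:GN} to each and then invoke the strict super-additivity. With these adjustments the outline is correct.
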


\begin{proof}[Proof of Prop. \ref{P:varQ}]
We prove  Remark \ref{R:rescale} which is equivalent to Prop.
\ref{P:varQ} by rescaling off the mass. Set $\tilde u(x) =
\lambda^{-3/2} v(\lambda^{-1}x)$.  Then \eqref{E:gradient-close}
implies
\begin{equation}
 \label{E:renormgrad-close}
\left| \frac{\|\nabla \tilde{u} \|_{L^2}}{\|\nabla Q\|_{L^2}} - 1
\right| \leq  \rho.
 \end{equation}
Next, by \eqref{E:energy-close} and \eqref{E:gradient-close} we have
\begin{align*}
2\left| \frac{\|v\|_{L^4}^4}{\|Q\|_{L^4}^4} - \lambda^3 \right|
&\leq \left| \frac{E[v]}{E[Q]} - (2\lambda^3-3\lambda^2) \right|
+ 3\left| \frac{\|\nabla v\|_{L^2}^2}{\|\nabla Q\|_{L^2}^2} - \lambda^2 \right| \\
&\leq \rho \left( \lambda^3 + 3 \begin{cases} \lambda^3 & \text{if }\lambda \leq 1 \\
\lambda^2 & \text{if } \lambda \geq 1 \end{cases} \right) \\
&\leq 4\lambda^3 \rho.
\end{align*}
Thus, in terms of $\tilde u$, we obtain
\begin{equation}
 \label{E:L4tilde}
\left| \frac{\| \tilde u\|_{L^4}^4}{\|Q\|_{L^4}^4} - 1 \right| \leq
2 \rho.
\end{equation}
Hence, \eqref{E:renormgrad-close} and \eqref{E:L4tilde} imply the
condition \eqref{E:close} for $\tilde u$ (the factors in front of
$\rho$ in both inequalities can be inconsequentially incorporated
into $\rho$), and by Proposition \ref{P:ccQ}, there exist $\theta
\in \cR$ and $x_0 \in \cR^3$ such that \eqref{E:Qclose} holds for
$\tilde u$. Rescaling back to $v$, we obtain exactly
\eqref{E:L2char} and \eqref{E:H1char}.
\end{proof}

\section{Near-boundary case}
\label{S:near-boundary}

We know by Prop. \ref{P:dichotomy} that if $M[u]=M[Q]$ and
$E[u]/E[Q]=3\lambda^2-2\lambda^3$ for some $\lambda>1$ and $\|\nabla
u_0\|_{L^2}/\|\nabla Q\|_{L^2} \geq 1$, then $\|\nabla u(t)
\|_{L^2}/\|\nabla Q\|_{L^2} \geq \lambda$ for all $t$. The next
result says that  $\|\nabla u(t)\|_{L^2}/\|\nabla Q\|_{L^2}$ cannot,
globally  in time, remain near $\lambda$.

\begin{proposition}[Near boundary case]
 \label{P:nbc}
Let $\lambda_0>1$. There exists $\rho_0= \rho_0(\lambda_0)>0$ (with
the property that $\rho_0\to 0$ as $\lambda_0\searrow 1$) such that
for any $\lambda\geq \lambda_0$, the following holds:  There does
\textsc{not} exist a solution $u(t)$ of NLS with $P[u]=0$ satisfying
$\|u\|_{L^2} = \|Q\|_{L^2}$,
\begin{equation}
 \label{E:nbc1}
\frac{E[u]}{E[Q]} = 3\lambda^2-2\lambda^3 \, ,
\end{equation}
and
\begin{equation}
 \label{E:nbc2}
\lambda \leq \frac{\|\nabla u(t)\|_{L^2}}{\|\nabla Q\|_{L^2}} \leq
\lambda(1+\rho_0) \quad \text{for all }t\geq 0\, .
\end{equation}
\end{proposition}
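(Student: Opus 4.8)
The plan is to suppose such a solution $u(t)$ exists and derive a contradiction with the virial-type estimates of \S\ref{S:virial}. By \eqref{E:nbc2} the norm $\|\nabla u(t)\|_{L^2}$ stays bounded on $[0,\infty)$, so $u(t)$ is global forward in time. Since $M[u]=M[Q]$, the hypotheses of Proposition \ref{P:varQ} hold at every time $t\ge 0$ with $\beta=1$ and $\rho=\rho_0$: the left side of \eqref{E:MEclose} vanishes by \eqref{E:nbc1}, and \eqref{E:L2Gclose} follows from $|\eta(t)-\lambda|\le\lambda\rho_0$, which is \eqref{E:nbc2}. Hence, writing $\varepsilon=\epsilon(\rho_0)$ (so $\varepsilon\to 0$ as $\rho_0\to 0$), for each $t$ there are $\theta(t)\in\mathbb{R}$, $x(t)\in\mathbb{R}^3$ with
$$u(\cdot,t)=e^{i\theta(t)}\lambda^{3/2}Q(\lambda(\cdot-x(t)))+r(\cdot,t),\qquad \|r(t)\|_{L^2}\le\varepsilon,\quad \|\nabla r(t)\|_{L^2}\le\lambda\varepsilon.$$
Together with the exponential decay of $Q$, this yields a uniform-in-time concentration of $u(t)$ about $x(t)$: setting $\mu:=\max(\varepsilon,e^{-c\lambda R})$ one has $\|u(t)\|_{L^2(|x-x(t)|\ge R/2)}\lesssim\mu$, $\|\nabla u(t)\|_{L^2(|x-x(t)|\ge R/2)}\lesssim\lambda\mu$, and, via the exterior Gagliardo–Nirenberg inequality \eqref{E:GNoutside}, $\|u(t)\|_{L^4(|x-x(t)|\ge R/2)}^4\lesssim\lambda^3\mu^4$. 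After a space translation we may take $x(0)=0$.

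Next I would fix a radius $R=R(\lambda_0)$ and run the scalar local variance $z_R(t)$ of \eqref{E:locvar}. The computation in the proofs of Propositions \ref{P:local-blowup-time}–\ref{P:radial-blowup-time} gives $z_R''(t)=(24E[u]-4\|\nabla u(t)\|_{L^2}^2)+A_R(u(t))$, where by \eqref{E:nbc1}, \eqref{E:ME} and $\eta(t)\ge\lambda$,
$$24E[u]-4\|\nabla u(t)\|_{L^2}^2=4\|\nabla Q\|_{L^2}^2(3\lambda^2-2\lambda^3-\eta(t)^2)\le -8\|\nabla Q\|_{L^2}^2\,\lambda^2(\lambda-1).$$
While $|x(t)|<R/2$, the concentration above and \eqref{E:ARestimate} bound $A_R(u(t))\lesssim R^{-2}\mu^2+\lambda^3\mu^4$; choosing $R$ large enough that $e^{-c\lambda R}$ is negligible uniformly in $\lambda\ge\lambda_0$, and then $\rho_0$ small enough that $\lambda^3\mu^4\le 4\|\nabla Q\|_{L^2}^2\lambda^2(\lambda-1)$ (which only requires $\varepsilon^4\lesssim 1-\lambda_0^{-1}$), one gets $z_R''(t)\le -2c_\lambda$ with $c_\lambda\asymp\lambda^2(\lambda-1)$. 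As in Proposition \ref{P:blowup-time}, integrating twice forces $z_R$ to become negative — impossible, since $z_R\ge 0$ — by a time $T_0\lesssim z_R'(0)/c_\lambda+(z_R(0)/c_\lambda)^{1/2}$, provided $z_R''\le -2c_\lambda$ stays valid on $[0,T_0]$, i.e.\ provided $|x(t)|<R/2$ there.

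The heart of the argument is therefore to show the center cannot leave $B(0,R/2)$ before time $T_0$, and this is where $P[u]=0$ enters. For this I would track the vector-valued localized variance $z_R^{(j)}(t)=\int x_j\chi(x/R)|u(x,t)|^2\,dx$ with $\chi\in C_c^\infty$ equal to $1$ on $B(0,1)$ and supported in $B(0,2)$. The NLS continuity equation gives
$$\tfrac{d}{dt}z_R^{(j)}(t)=\int \chi(x/R)J_j\,dx+\tfrac1R\int x_j(\nabla\chi)(x/R)\cdot J\,dx,\qquad J:=2\,\I(\bar u\nabla u),$$
where the first integral equals $2P_j[u]=0$ up to a contribution supported in $|x|\ge R$, and the second is supported in $R\le|x|\le 2R$; hence, while $|x(t)|<R/2$, one gets $|\frac{d}{dt}z_R^{(j)}(t)|\lesssim\|u(t)\|_{L^2(|x|\ge R)}\|\nabla u(t)\|_{L^2(|x|\ge R)}\lesssim\lambda\mu^2$. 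On the other hand the concentration yields $z_R^{(j)}(t)=M[Q]\,x_j(t)+O(R\mu)$ in that regime, and $z_R^{(j)}(0)=O(R\mu)$. Since $z_R^{(j)}$ is continuous in $t$, if $|x(t)|$ ever reached $R/2$ then at the first such time $t_1$ some component would obey $\tfrac14 RM[Q]\lesssim|z_R^{(j)}(t_1)-z_R^{(j)}(0)|\lesssim\lambda\mu^2 t_1$ (for $\mu$ small), i.e.\ $t_1\gtrsim R/(\lambda\mu^2)$. A bookkeeping check — splitting the range $\lambda\ge\lambda_0$ according to whether $e^{-c\lambda R}\ge\varepsilon$, and using $c_\lambda\ge\lambda_0^2(\lambda_0-1)>0$ — shows that after fixing $R=R(\lambda_0)$ large and then $\rho_0=\rho_0(\lambda_0)$ small one has $T_0<R/(\lambda\mu^2)$ for every $\lambda\ge\lambda_0$; this is the source of the requirement $\rho_0\to 0$ as $\lambda_0\searrow 1$. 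Consequently $|x(t)|<R/2$ on all of $[0,T_0]$, so $z_R(T_0)<0$ — the desired contradiction.

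The step I expect to be the main obstacle is precisely this control of the moving center $x(t)$: one must convert the exact vanishing of the momentum into a quantitative drift bound, fighting error terms that do not vanish but are only exponentially small in $\lambda R$, and then make every estimate uniform over the unbounded parameter range $\lambda\ge\lambda_0$ with a single $\rho_0=\rho_0(\lambda_0)$. The only other technical point is the routine verification that the modulation parameters (equivalently $z_R^{(j)}$) depend continuously on $t$, so that the first exit time $t_1$ is well defined.
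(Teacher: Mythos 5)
Your argument is correct and rests on the same three pillars as the paper's proof: the variational characterization (Prop.~\ref{P:varQ}) to extract a modulated profile $e^{i\theta(t)}\lambda^{3/2}Q(\lambda(\cdot-x(t)))$, the local virial identity to force $z_R''\leq -c\lambda^2(\lambda-1)$, and the vanishing of the momentum to control the drift of the center $x(t)$. Where you differ is in how the last two are combined. The paper does not confine $x(t)$ to a fixed ball at all: it sets $R=2R(T)$ with $R(T)=\max\bigl(\max_{0\leq t\leq T}|x(t)|,\log\epsilon(\rho)^{-1}\bigr)$, integrates the virial inequality on $[0,T]$ to get $0\leq z_{2R(T)}(T)\leq c\bigl(R(T)^2/T^2+R(T)/T\bigr)-12E[Q]\lambda^2(\lambda-1)$, and then invokes Lemma~\ref{L:center-control} (proved in Appendix~\ref{S:pf-center-control} by exactly the vector-valued localized first-moment computation you sketch) to conclude $R(T)/T\lesssim\epsilon^2$ as $T\to\infty$, which kills the first term. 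Your version instead fixes $R=R(\lambda_0)$ and runs a first-exit-time bootstrap, comparing the virial blow-down time $T_0\lesssim R/\lambda\cdot K(\lambda_0)$ against the exit time $t_1\gtrsim R/(\lambda\mu^2)$; the crucial cancellation of the factor $R/\lambda$ makes the required smallness condition $\mu^2K(\lambda_0)\lesssim 1$ uniform over $\lambda\geq\lambda_0$, so your bookkeeping does close. The trade-off: your route is self-contained (no separate drift lemma) but needs the continuity/first-exit argument and the uniformity check you flag; the paper's route avoids any confinement of $x(t)$ and any bootstrap, at the cost of an asymptotic ($T\to\infty$) rather than quantitative contradiction. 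Both are legitimate; neither yields an explicit time bound, consistent with the paper's remark that Theorem~\ref{T:main} is nonquantitative.
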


Of course, the assertion is equivalent to:  For every solution
$u(t)$ of NLS  with $P[u]=0$ satisfying $\|u\|_{L^2} = \|Q\|_{L^2}$,
$$
\frac{E[u]}{E[Q]} = 3\lambda^2-2\lambda^3 \, ,
$$
and
$$
\lambda \leq \frac{\|\nabla u(t)\|_{L^2}}{\|\nabla Q\|_{L^2}} \quad
\text{for all }t\geq 0,
$$
there exists a time $t_0 \geq 0$ such that
$$
\frac{\|\nabla u(t_0)\|_{L^2}}{\|\nabla Q\|_{L^2}} \geq
\lambda(1+\rho_0),
$$
equivalently, there exists a sequence $t_n\to +\infty$ such that
$$
\frac{\|\nabla u(t_n)\|_{L^2}}{\|\nabla Q\|_{L^2}} \geq
\lambda(1+\rho_0)
$$
for all $n$. This seemingly stronger statement is seen to be
equivalent by ``resetting'' the initial time $\tilde u(t) = u(t -
t_0-1)$ for $t\geq 0$.

We shall need a version of Lemma 5.1 from \cite{DHR}.

\begin{lemma}
\label{L:center-control}
 Suppose that $u(t)$ with $P[u]=0$ solving \eqref{E:NLS} satisfies, for all $t$,
\begin{equation}
 \label{E:apx-close}
\|u(t) - e^{i\theta(t)}Q(\bullet - x(t))\|_{H^1} \leq \epsilon
\end{equation}
for some continuous functions $\theta(t)$ and $x(t)$.  Then
$$
\frac{|x(t)|}{t} \lesssim \epsilon^2 \quad \text{as} \quad
t\to+\infty.
$$
\end{lemma}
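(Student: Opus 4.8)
The plan is to exploit the momentum conservation $P[u] = 0$ together with the smallness hypothesis \eqref{E:apx-close} to control the velocity of the center $x(t)$. The key observation is that $P[u]$ can be expressed via the weighted quantity $\frac{d}{dt}\int \varphi_R(x) |u(t)|^2\,dx$ for a suitable weight, and since $Q$ is real-valued its contribution to the momentum vanishes; hence the only contributions come from the error term of size $\epsilon$, which gets squared. Concretely, first I would introduce a smooth cutoff $\theta_R(x) = R\,\vartheta(x/R)$ with $\vartheta$ a fixed function equal to $x_j$ near the origin (to extract the $j$-th component) and compute the local center-of-mass type quantity $m_j(t) = \int \theta_R(x)|u(x,t)|^2\,dx$. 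Its time derivative is $m_j'(t) = 2\,\I\int \overline{u}\,\partial_j\big(\theta_R\big)\,\cdot\,(\text{appropriate first-order operator})\,u$ — more precisely, a localized version of the momentum, which differs from $P_j[u] = 0$ by error terms supported where $\nabla \theta_R$ is not constant, i.e., for $|x| \gtrsim R$.

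Next I would insert the ansatz $u(t) = e^{i\theta(t)}Q(\cdot - x(t)) + \epsilon\,w(t)$ with $\|w(t)\|_{H^1} \le 1$, chosen so that the soliton is centered at $x(t)$. Substituting into the localized-momentum expression: the pure-$Q$ term contributes $0$ because $Q$ is real and $\I\int \bar Q\,\nabla Q = 0$ (in fact $P[Q]=0$ exactly, and its localized version is $O(\text{tail of }Q)$, exponentially small in $R - |x(t)|$); the cross terms and the pure-$w$ term are each $O(\epsilon)$ or $O(\epsilon^2)$. Summing and using $P[u]=0$, one gets $|m_j'(t)| \lesssim \epsilon^2 + (\text{localization error})$. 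On the other hand, the same ansatz shows $m_j(t) = x_j(t) + O(\epsilon) + O(R) + (\text{tail terms})$, since $\int \theta_R(x)|Q(x-x(t))|^2\,dx \approx x_j(t)$ when $R \gg |x(t)|$. The cleanest route is to choose $R = R(t)$ growing, say $R(t) = |x(t)| + 1$ (or simply take $R$ large and track the argument on the time interval where $|x(t)| \le R/2$), so that the localization/tail errors are negligible compared to the main terms. Integrating $|m_j'| \lesssim \epsilon^2$ from $0$ to $t$ yields $|m_j(t)| \lesssim \epsilon^2 t + C$, hence $|x_j(t)| \lesssim \epsilon^2 t + C + O(\epsilon)$, and dividing by $t$ and sending $t \to \infty$ gives the claim.

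The main obstacle is the bookkeeping of the localization errors and the interplay between the cutoff radius $R$ and the (a priori unknown) growth of $|x(t)|$: one must rule out the possibility that $|x(t)|$ grows faster than any fixed $R$ allows, which would invalidate the approximation $m_j(t) \approx x_j(t)$. This is handled by a continuity/bootstrap argument — assuming $|x(t)| \le R/2$ on a maximal interval, the estimate $|x(t)| \lesssim \epsilon^2 t$ one derives is consistent with (and strictly better than) that assumption once $R$ is taken large enough relative to $\epsilon$, so the interval is in fact all of $[0,\infty)$ after sending $R \to \infty$ appropriately, or one simply lets $R$ depend on $t$ from the outset. A secondary technical point is that $\theta(t)$ and $x(t)$ are only assumed continuous, not differentiable; but since they enter only inside integrals that are themselves differentiable in $t$ (the conserved/localized momentum is a function of $u(t)$ alone), this causes no difficulty — one never differentiates $x(t)$ or $\theta(t)$ directly. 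This is essentially the argument of Lemma 5.1 in \cite{DHR}, adapted to the present normalization.
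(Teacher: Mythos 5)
Your argument is essentially the paper's proof (which itself follows Lemma 5.1 of \cite{DHR}): one forms a truncated center of mass $z_R(t)=\int R\,\vartheta(x/R)\,|u|^2$ whose time derivative is a localized momentum, equal to $P[u]=0$ up to errors supported in $|x|\gtrsim R$, where $u$ is of size $\epsilon$ plus an exponentially small soliton tail, so that $|z_R'(t)|\lesssim \left(\epsilon+e^{-c(R-|x(t)|)}\right)^2$. The paper resolves the cutoff-versus-growth interplay exactly as you suggest, by fixing $T$ and taking $R=2\max_{0\le t\le T}|x(t)|$ (so no bootstrap is needed), then comparing $|z_R(T)|\gtrsim |x(T)|$ with $|z_R(0)|\lesssim 1+R\epsilon^2$ after integrating in time.
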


The proof of this lemma is very similar to that of Lemma 5.1 in
\cite{DHR}.  For the reader's convenience, we carry it out in
Appendix \ref{S:pf-center-control}.

\begin{proof}[Proof of Prop. \ref{P:nbc}]
The constants $c_j$ we introduce below are absolute constants. To
the contrary, suppose that $u(t)$ is a solution of the type
described in the proposition statement, i.e., $\|u\|_{L^2} =
\|Q\|_{L^2}$, $E[u]/E[Q] = 3\lambda^2-2\lambda^3$ and
\begin{equation}
\label{E:grad-up-bd}
 \lambda \leq \frac{\|\nabla u(t)\|_{L^2}}{\|\nabla Q\|_{L^2}} \leq
\lambda(1+\rho_0) \quad \text{for all }t\geq 0 \,.
\end{equation}
Since $\|\nabla u(t)\|_{L^2}^2 \geq \lambda^2 \|\nabla Q\|^2_{L^2} =
6 \lambda^2 E[Q]$, we have
$$
24E[u] - 4\|\nabla u(t)\|_{L^2}^2 \leq -48E[Q]\lambda^2(\lambda-1).
$$
By Prop. \ref{P:varQ}, there exist functions $x(t)$ and $\theta(t)$
such that
\begin{equation}
 \label{E:L2x(t)}
\|u(t) - e^{i\theta(t)}\lambda^{3/2}Q(\lambda( \bullet + x(t)))
\|_{L^2} \leq \epsilon(\rho),
\end{equation}
\begin{equation}
 \label{E:dotH1x(t)}
\|u(t) - e^{i\theta(t)}\lambda^{3/2}Q(\lambda( \bullet + x(t)))
\|_{\dot{H}^1} \leq \lambda \epsilon(\rho).
\end{equation}
By continuity of the $u(t)$ flow, we may assume that $\theta(t)$ and
$x(t)$ are continuous. Let
$$
R(T) = \max \left(\max_{0\leq t\leq T} |x(t)|, \;
\log\epsilon(\rho)^{-1} \right) \,.
$$
Fix $T>0$.  Take $R=2R(T)$ in the local virial identity
\eqref{E:locvar''}. By \eqref{E:L2x(t)}-\eqref{E:dotH1x(t)}, there
exists $c_2>0$ such that
$$
|A_R(u(t))| \leq \tfrac12 c_2 \lambda^2
\left(\epsilon(\rho)+e^{-R(T)}\right)^2 \leq c_2 \lambda^2
\epsilon(\rho)^2.
$$
Consequently, by taking $\rho_0$ small enough, we can make
$\epsilon(\rho)$ small enough so that for all $0\leq t\leq T$,
$$
z_R''(t) \leq -24E[Q]\lambda^2(\lambda-1).
$$
(Note that here, the closer $\lambda>1$ is to $1$, the smaller
$\rho_0$ needs to be taken.)  By integrating in time over $[0,T]$
twice, we obtain that
$$
\frac{z_R(T)}{T^2} \leq \frac{z_R(0)}{T^2} + \frac{z_R'(0)}{T} - 12
E[Q]\lambda^2(1-\lambda)\,.
$$
We have
$$
|z_R(0)| \leq c_3 R^2\|u_0\|_{L^2}^2 = c_3\|Q\|_{L^2}^2 R^2,
$$
and
$$
|z_R'(0)| \leq c_3 R\|u_0\|_{L^2}\|\nabla u_0\|_{L^2}\leq c_3
\|Q\|_{L^2} \|\nabla Q\|_{L^2}(1+\rho_0) R,
$$
and as a result
$$
z_{2R(T)}(T) \leq c_4\left(\frac{R(T)^2}{T^2} +
\frac{R(T)}{T}\right) - 12E[Q]\lambda^2(\lambda-1).
$$
By taking $T$ sufficiently large and applying Lemma
\ref{L:center-control}, we obtain
$$
0\leq z_{2R(T)}(T) \leq c_4\epsilon(\rho)^2 -   12E[Q]\lambda^2(\lambda-1)<0$$
provided $\rho_0$ is selected small enough so that
$c_4\epsilon(\rho)^2 \leq 6E[Q]\lambda^2(\lambda-1)$.  Note this
selection of $\rho_0$ is independent of $T$. This is a
contradiction.
\end{proof}

\section{Profile decomposition}
\label{S:pd}

Let us recall the Keraani-type profile decomposition lemma and some
associated results from \cite{HR}, \cite{DHR}.  We first need to
review the Strichartz norm notation from \cite{HR}.

We say that
$(q,r)$ is $\dot H^s$ Strichartz admissible (in 3d) if
$$
\frac2q+\frac3r=\frac32-s.
$$
Let
$$
\|u\|_{S(L^2)} = \sup_{\substack{(q,r)\; L^2 \text{ admissible} \\
2\leq r \leq 6, \;  2\leq q \leq \infty}} \|u\|_{L_t^qL_x^r}.
$$
Define
$$
\|u\|_{S(\dot H^{1/2})} = \sup_{\substack{(q,r)\; \dot H^{1/2} \text{ admissible} \\
3\leq  r \leq 6^-, \; 4^+ \leq q \leq \infty}} \|u\|_{L_t^qL_x^r} \,
,
$$
where $6^-$ is an arbitrarily preselected and fixed number $<6$;
similarly for $4^+$.
Now we consider dual Strichartz norms. Let
$$
\|u\|_{S'(L^2)} = \inf_{\substack{(q,r)\; L^2 \text{ admissible} \\
2\leq q \leq \infty, \; 2\leq r \leq 6}} \|u\|_{L_t^{q'}L_x^{r'}},
$$
where $(q',r')$ is the H\"older dual to $(q,r)$.  Also define
$$
\|u\|_{S'(\dot H^{-1/2})} = \inf_{\substack{(q,r)\; \dot H^{-1/2} \text{ admissible} \\
\frac{4}{3}^+ \leq q \leq 2^-, \; 3^+ \leq r \leq 6^-}} \|u\|_{L_t^{q'}L_x^{r'}} \, .
$$

We extend our notation $S(\dot H^s)$, $S'(\dot H^s)$ as follows: If
a time interval is not specified (that is, if we just write $S(\dot
H^s)$, $S'(\dot H^s)$), then the $t$-norm is evaluated over
$(-\infty,+\infty)$.  To indicate a restriction to a time
subinterval $I\subset (-\infty,+\infty)$, we will write $S(\dot
H^s;I)$ or $S'(\dot H^s; I)$.  We shall also use the notation $\nls(t)$ to indicate the nonlinear flow map associated to \eqref{E:NLS}.  

The following proposition incorporates results from our earlier
papers.  The basic form of the (linear) profile decomposition is
proved in \cite[Lemma 5.2]{HR}, \cite[Lemma 2.1]{DHR} (and the proof
given there was modeled on a similar result of Keraani \cite{K}).
The proof of \eqref{E:energy-Pythag} is given in \cite[Lemma
2.3]{DHR} and the method of replacing linear flows by nonlinear
flows appears as part of \cite[Prop. 5.4, 5.5]{HR}.

\begin{proposition}
 \label{P:pd}
Suppose that $\phi_n=\phi_n(x)$ is a bounded sequence in $H^1$.
There exist a subsequence of $\phi_n$ (still denoted $\phi_n$),
profiles $\psi^j$ in $H^1$, and parameters $x_n^j$, $t_n^j$ so that
for each $M$,
$$
\phi_n = \sum_{j=1}^M \nls(-t_n^j)\,\psi^j(\bullet-x_n^j) + W_n^M,
$$
where (as $n \to \infty$):
\begin{itemize}
\item
For each $j$, either $t_n^j=0$ \footnote{This is done by passing to
another subsequence in $n$ and adjusting the profiles $\psi^j$; see
also comment in Step 1 of the proof \cite[Lemma 2.3]{DHR}.},
$t_n^j\to +\infty$, or $t_n^j \to -\infty$.
\item
If $t_n^j \to +\infty$, then $\|\nls(-t)\,\psi^j\|_{S(\dot H^{1/2};
[0,+\infty))} <\infty$ and if $t_n^j \to -\infty$, then
$\|\nls(-t)\,\psi^j\|_{S(\dot H^{1/2};(-\infty,0])}< \infty$.
\item
For $j\neq k$,
$$
|t_n^j-t_n^k| + |x_n^j-x_n^k| \to +\infty.
$$
\item
$\nls(t)W_n^M$ is global for $M$ large enough with ~$\ds
\left(\lim_n \|\nls(t)\,W_n^M\|_{S(\dot H^{1/2})} \right) \to 0$ as
$M\to \infty$.  (Note: we do not claim that $\ds \lim_n
\|W_n^M\|_{H^1}\to 0$.)
\end{itemize}

We also have the $\dot H^s$ Pythagorean decomposition: For fixed $M$
and $0 \leq s \leq 1$, we have
\begin{equation}
\label{E:Hs-Pythag} \|\phi_n\|_{\dot H^s}^2 = \sum_{j=1}^M
\|\nls(-t_n^j)\,\psi^j\|_{\dot H^s}^2 + \|W_n^M\|_{\dot H^s}^2 +
o_n(1).
\end{equation}
We also have the energy Pythagorean decomposition \footnote{By
energy conservation $E[\psi^j] = E[\nls(-t_n^j) \, \psi^j]$.}:
\begin{equation}
\label{E:energy-Pythag} E[\phi_n] = \sum_{j=1}^M E[\psi^j] +
E[W_n^M] + o_n(1).
\end{equation}
\end{proposition}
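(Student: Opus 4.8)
The plan is to obtain Proposition~\ref{P:pd} by assembling three essentially known ingredients: the Keraani-type \emph{linear} profile decomposition (\cite{K}, as adapted in \cite[Lemma 5.2]{HR} and \cite[Lemma 2.1]{DHR}), the replacement of linear evolutions by nonlinear ones via small-data scattering and long-time perturbation theory (\cite[Prop. 5.4, 5.5]{HR}), and the almost-orthogonality (Pythagorean) expansions (\cite[Lemma 2.3]{DHR}). The role of the proof is to package these, checking that the adaptations needed here---in particular, working throughout in the $\dot H^{1/2}$-critical Strichartz space $S(\dot H^{1/2})$---go through as before.

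First I would produce the linear decomposition, i.e.\ the statement with $\nls(-t_n^j)$ replaced by the free evolution and $\nls(t)W_n^M$ replaced by $e^{it\Delta}W_n^M$. The mechanism is a refined Strichartz (Sobolev) inequality bounding $\|e^{it\Delta}f\|_{S(\dot H^{1/2})}$ (for instance its $L^5_{t,x}$ component, which is $\dot H^{1/2}$-admissible) by a geometric mean of $\|f\|_{H^1}$ and a weaker, translation-covariant, frequency-localized quantity. If that weak quantity already vanishes along the sequence we finish with a single null profile; otherwise it locates, after passing to a subsequence, parameters $(x_n^1,t_n^1)$ and a nonzero weak limit $\psi^1$ of an appropriate space-time translate of $\phi_n$. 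Subtracting the bubble and iterating produces the profiles; the orthogonality $|t_n^j-t_n^k|+|x_n^j-x_n^k|\to\infty$ for $j\neq k$ is forced by the weak-limit construction, and Bessel's inequality $\sum_j\|\psi^j\|_{H^1}^2\leq\limsup_n\|\phi_n\|_{H^1}^2$ forces $\|\psi^j\|_{H^1}\to 0$, whence $\limsup_n\|e^{it\Delta}W_n^M\|_{S(\dot H^{1/2})}\to 0$ as $M\to\infty$. Passing to a further subsequence, each $t_n^j$ either converges---then its limit is absorbed into the profile and $t_n^j=0$---or $|t_n^j|\to\infty$, in which case the relevant half-line Strichartz norm of the linear profile is finite, being the tail of a globally finite norm (finite since $H^1\hookrightarrow\dot H^{1/2}$ in $\mathbb{R}^3$).

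Next I would replace linear flows by nonlinear ones. For $j$ with $t_n^j=0$ one keeps $\nls(-t_n^j)\psi^j=\psi^j$. For $j$ with $t_n^j\to+\infty$ (similarly $-\infty$), the smallness of $\|e^{it\Delta}\psi^j\|_{S(\dot H^{1/2};[T,\infty))}$ for large $T$, together with the small-data well-posedness/scattering theory in the critical space, produces a nonlinear solution asymptotic to $e^{it\Delta}\psi^j$; replacing $e^{-it_n^j\Delta}\psi^j$ by this nonlinear profile costs an error tending to $0$ in $H^1$. That $\nls(t)W_n^M$ is global with vanishing $S(\dot H^{1/2})$ norm for $M,n$ large then follows from the Strichartz-based long-time perturbation lemma: $\sum_{j=1}^M\nls(-t_n^j)\psi^j(\bullet-x_n^j)$ is an approximate solution whose error is small in the dual norm $S'(\dot H^{-1/2})$---smallness coming precisely from the parameter orthogonality, which decouples the cubic interactions of distinct profiles---while $W_n^M$, the difference between $\phi_n$ and this approximate solution, is small in $S(\dot H^{1/2})$ at $t=0$; the perturbation lemma propagates this. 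This is the content of \cite[Prop. 5.4, 5.5]{HR}, which I would quote. I expect this step to demand the most care, since it is where the critical-norm bookkeeping (the admissibility ranges with $6^-$, $4^+$), the small-data scattering theory, and the perturbation lemma must all be invoked consistently; the linear decomposition and the orthogonality expansions are robust by comparison.

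Finally, for the Pythagorean expansions \eqref{E:Hs-Pythag} and \eqref{E:energy-Pythag} I would expand the relevant norms and kill the cross terms. For \eqref{E:Hs-Pythag}, the cross term $\langle\nls(-t_n^j)\psi^j(\bullet-x_n^j),\,\nls(-t_n^k)\psi^k(\bullet-x_n^k)\rangle_{\dot H^s}\to 0$ for $j\neq k$ follows from parameter orthogonality after approximating the nonlinear profiles by linear ones on their time tails (scattering) and by smooth, rapidly decaying data on bounded time windows: a divergent spatial gap kills the inner product directly, a divergent temporal gap kills it via dispersive decay of one factor; the cross terms with $W_n^M$ vanish because $W_n^M$ tends weakly to $0$ after every relevant shift, by construction. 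For \eqref{E:energy-Pythag} one combines the $s=1$ case with the analogous $L^4$ almost-orthogonality, the $W_n^M$ contribution being dropped via $\|W_n^M\|_{L^4}\to 0$ as $M\to\infty$ (interpolating its Strichartz smallness against its bounded $H^1$ norm); this is exactly \cite[Lemma 2.3]{DHR}.
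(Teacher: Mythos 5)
Your proposal is correct and follows essentially the same route as the paper, which gives no self-contained proof but assembles the proposition from exactly the three ingredients you name: the Keraani-type linear profile decomposition of \cite[Lemma 5.2]{HR}, \cite[Lemma 2.1]{DHR}, the replacement of linear by nonlinear flows via wave operators and perturbation theory as in \cite[Prop.\ 5.4, 5.5]{HR}, and the Pythagorean/$L^4$ almost-orthogonality of \cite[Lemma 2.3]{DHR}. Your reconstruction of how each cited piece works is consistent with those references, so there is nothing further to add.
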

A similar statement to \eqref{E:energy-Pythag} was proved in
\cite[Lemma 2.3]{DHR} for the linear flows $e^{-it_n^j\Delta}\psi^j$
by establishing the $L^4$
orthogonal decomposition, and implicitly (by the existence of wave
operators and the long-term perturbation argument) for the nonlinear
flow:
\begin{equation}
 \label{E:L4decomp}
\|\phi_n\|_{L^4}^4 = \sum_{j=1}^M \|\nls(-t_n^j)\,\psi^j\|_{L^4}^4 +
\|W_n^M\|_{L^4}^4 + o_n(1),
\end{equation}
and thus, the energy Pythagorean decomposition
\eqref{E:energy-Pythag} follows.

The next lemma is taken from \cite[Prop. 2.3]{HR} (the statement is
slightly different, but the proof given there actually establishes
the statement given below):

\begin{lemma}[perturbation theory]
\label{L:pt} For each $A\gg 1$, there exists
$\epsilon_0=\epsilon_0(A)\ll 1$ and $c=c(A)$ such that the following
holds.  Fix $T>0$.  Let $u=u(x,t)\in L_{[0,T]}^\infty H_x^1$ solve
$$
i\partial_t u + \Delta u + |u|^2u=0
$$
on $[0,T]$. Let $\tilde u(x,t)\in L_{[0,T]}^\infty H_x^1$ and define
$$
e=i\partial_t\tilde u + \Delta \tilde u + |\tilde u|^2\tilde u.
$$
For each $\epsilon \leq \epsilon_0$, if
$$
\|\tilde u \|_{S(\dot H^{1/2}; [0,T])} \leq A,
\quad \|e\|_{S'(\dot H^{1/2};[0,T])} \leq \epsilon, \quad \text{and} \quad
\| e^{it\Delta}(u(0)-\tilde u(0))\|_{S(\dot H^{1/2};[0,T])} \leq \epsilon \, ,
$$
then
$$
\|u-\tilde u\|_{S(\dot H^{1/2};[0,T])} \leq c(A)\epsilon \,.
$$
\end{lemma}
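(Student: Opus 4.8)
The plan is to run the standard ``long-time (stability) perturbation'' argument, adapted to the $\dot H^{1/2}$-critical Strichartz framework of \cite{HR}. First I would partition $[0,T]$ into finitely many subintervals $I_k=[t_k,t_{k+1}]$, $0=t_0<t_1<\dots<t_N=T$, chosen so that $\|\tilde u\|_{S(\dot H^{1/2};I_k)}\le\delta_0$ on each, where $\delta_0$ is a small absolute constant to be fixed during the nonlinear estimate below. Such a partition exists with $N=N(A,\delta_0)<\infty$ because $\|\tilde u\|_{S(\dot H^{1/2};[0,T])}\le A$ and each $L^q_tL^r_x$ piece of the $S(\dot H^{1/2})$ norm is absolutely continuous in $t$ on the compact interval $[0,T]$ (note $\tilde u\in L^\infty_{[0,T]}H^1_x$ already guarantees finiteness of this norm, so the subdivision is legitimate). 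The point, to be tracked throughout, is that $N$ depends only on $A$ and absolute data, so every constant generated below will too.

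On a single interval I would estimate the difference $w=u-\tilde u$. From $i\partial_t u+\Delta u+|u|^2u=0$ and $i\partial_t\tilde u+\Delta\tilde u+|\tilde u|^2\tilde u=e$ one gets $i\partial_t w+\Delta w=-\big(|\tilde u+w|^2(\tilde u+w)-|\tilde u|^2\tilde u\big)-e$, and the cubic difference expands as a sum of trilinear monomials in $\{\tilde u,\overline{\tilde u},w,\overline w\}$, each containing at least one factor $w$. Writing the Duhamel formula on $I_k$ starting from $t_k$ and applying the inhomogeneous Strichartz estimate together with the trilinear product estimate $\|v_1v_2v_3\|_{S'(\dot H^{-1/2})}\lesssim\prod_i\|v_i\|_{S(\dot H^{1/2})}$ from \cite{HR} (conjugates being harmless) and the dual-norm hypothesis on $e$, I obtain
$$\|w\|_{S(\dot H^{1/2};I_k)}\le C\gamma_k+C\big(\delta_0^2+\|w\|_{S(\dot H^{1/2};I_k)}^2\big)\|w\|_{S(\dot H^{1/2};I_k)}+C\epsilon,$$
where $\gamma_k:=\|e^{i(t-t_k)\Delta}w(t_k)\|_{S(\dot H^{1/2};[t_k,T])}$, and $\gamma_0\le\epsilon$ by the hypothesis on $e^{it\Delta}(u(0)-\tilde u(0))$. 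Fixing $\delta_0$ absolutely small so that $C\delta_0^2\le\tfrac14$, a continuity/bootstrap argument in the right endpoint of $I_k$ (valid since $t\mapsto w(t)$ is continuous into $H^1$ and $\|w\|_{S(\dot H^{1/2};I_k)}<\infty$) gives $\|w\|_{S(\dot H^{1/2};I_k)}\le 4C(\gamma_k+\epsilon)$, provided $\gamma_k+\epsilon$ is small enough to absorb the cubic self-interaction — a smallness arranged below.

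Then I would iterate over $k$. Re-evaluating the same Duhamel identity on $[t_{k+1},T]$ yields a recursion of the form $\gamma_{k+1}\le\gamma_k+C'(\gamma_k+\epsilon)$, so $\gamma_k\le C_k\epsilon$ with $C_k$ depending only on $k$ and absolute constants; since $k\le N=N(A)$ this gives $\gamma_k\le C_N(A)\epsilon$ uniformly in $k$. Now choose $\epsilon_0=\epsilon_0(A)$ small enough that $\big(C_N(A)+1\big)\epsilon_0$ lies below the smallness threshold used at every step of the bootstrap; for $\epsilon\le\epsilon_0$ the whole induction is legitimate and $\|w\|_{S(\dot H^{1/2};I_k)}\le 4C\big(C_N(A)+1\big)\epsilon$ for all $k$. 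Finally, for any admissible pair $(q,r)$ one has $\|w\|_{L^q_tL^r_x([0,T])}\le\big(\sum_k\|w\|_{L^q_tL^r_x(I_k)}^q\big)^{1/q}\le N^{1/q}\max_k\|w\|_{S(\dot H^{1/2};I_k)}$, and taking the supremum over pairs gives $\|w\|_{S(\dot H^{1/2};[0,T])}\le c(A)\epsilon$ with $c(A)\sim C\,N(A)\,C_N(A)$, as desired.

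The main obstacle is the bookkeeping in the iteration: one must check that the constant $C_k$ controlling $\gamma_k$ — hence both the final $c(A)$ and the smallness threshold defining $\epsilon_0(A)$ — depends only on the number of subintervals $N$, which depends only on $A$ and absolute data, and in particular \emph{not} on $T$ nor on any norm of $u$ itself. A secondary nuisance is that the inf-type dual norm $S'(\dot H^{-1/2})$ is not subadditive under partitioning of the time interval, so one cannot naively chop $\|e\|_{S'}$ into pieces; phrasing the iteration in terms of the evolved-data quantities $\gamma_k$ on $[t_k,T]$ (rather than summing nonlinear errors across subintervals) circumvents this, but it is the point that requires a little care to set up correctly.
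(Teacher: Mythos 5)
Your proposal is the standard long-time (stability) perturbation argument, which is exactly how this lemma is established: the paper does not reprove it but defers to \cite[Prop.~2.3]{HR}, whose proof is the same induction over subintervals on which $\tilde u$ is small, run through the free-evolution quantities $\gamma_k$ precisely so that only $e^{it\Delta}(u(0)-\tilde u(0))$, and not $\|u(0)-\tilde u(0)\|_{\dot H^{1/2}}$, needs to be small, and so that the constants depend on $T$ only through $A$. Your treatment of the inf-type dual norm and of the recursion for $\gamma_k$ matches that scheme.

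The one step whose justification fails as written is the partition. The norm $S(\dot H^{1/2})$ is a supremum over a family that includes the pair $(q,r)=(\infty,3)$, and $\|\tilde u\|_{L^\infty_{I}L^3_x}$ is \emph{not} absolutely continuous in the length of $I$: it is bounded below by $\|\tilde u(t)\|_{L^3_x}$ for any single $t\in I$ (consider $\tilde u=e^{it}Q$, for which this is a fixed positive constant on every subinterval). Hence one cannot in general choose intervals $I_k$ with $\|\tilde u\|_{S(\dot H^{1/2};I_k)}\le\delta_0$, and your stated reason for the existence of the partition ("each $L^q_tL^r_x$ piece is absolutely continuous in $t$") is false for $q=\infty$. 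The standard repair, which is what the argument behind \cite[Prop.~2.3]{HR} actually uses, is to partition with respect to a finite-exponent member of the family (for instance $L^5_{t,x}$ or $L^{4^+}_tL^{6^-}_x$), which is absolutely continuous on $[0,T]$ and controlled by $A$, and then to arrange the H\"older exponents in the trilinear estimate so that the two factors of $\tilde u$ in the terms of type $\tilde u^2 w$ (the only terms linear in $w$, hence the only ones whose coefficient must be small) are always placed in that finite-exponent small norm, while $w$ and any remaining factors of $\tilde u$ are measured in $S(\dot H^{1/2};I_k)$ and by $A$ respectively; the exponent bookkeeping closes precisely because of the $4^+$/$6^-$ fudge in the definition of the admissible ranges. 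With that modification, the rest of your scheme --- the bootstrap on each $I_k$, the recursion $\gamma_{k+1}\le(1+C')\gamma_k+C'\epsilon$, and the choice of $\epsilon_0(A)$ below the smallness threshold accumulated over $N(A)$ steps --- is correct and yields the stated conclusion.
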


We remark that $T$ does not actually enter into the parameter
dependence in any way: $\epsilon_0$ depends only on $A$, not on $T$.
In fact, in \cite[Prop. 2.3]{HR}, $T=+\infty$.  Now, in our
application below, it will turn out that $A=A(T)$, so ultimately
there will be dependence upon $T$, but it is only through $A$.

The equation \eqref{E:Hs-Pythag} gives $\dot H^1$ asymptotic
orthogonality at $t=0$, but we will need to extend this to the NLS
flow for $0\leq t \leq T$.  This is the subject of the next lemma,
which does not appear in our previous papers.

\begin{lemma}[~$\dot H^1$ Pythagorean decomposition along the $\nls$ flow]
 \label{L:Pythag-flow}
Suppose (as in Prop. \ref{P:pd}) $\phi_n$ is a bounded sequence in
$H^1$. Fix any time $0<T<\infty$. Suppose that $u_n(t)\equiv
\nls(t)\phi_n$ exists up to time $T$ for all $n$ and
$$
\lim_{n \to \infty} \|\nabla u_n(t)\|_{L_{[0,T]}^\infty
L_x^2}<\infty.
$$
Let $W_n^M(t)\equiv \nls(t)W_n^M$ (which we know is global and, in
fact, scattering).  Then, for all $j$, $v^j(t) \equiv
\nls(t)\,\psi^j$ exist up to time $T$ and for all  $t\in [0,T]$,
\begin{equation}
 \label{E:orth_flow}
\|\nabla u_n(t)\|_{L^2}^2 = \sum_{j=1}^M \|\nabla
v^j(t-t_n^j)\|_{L^2}^2 + \|\nabla W_n^M(t)\|_{L_x^2}^2 + o_n(1).
\end{equation}
Here, $o_n(1)\to 0$ uniformly on $0\leq t\leq T$.
\end{lemma}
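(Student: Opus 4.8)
The plan is to approximate $u_n(t)=\nls(t)\phi_n$ on $[0,T]$ by the superposition of the individual nonlinear flows,
$$
\tilde u_n(t)=\sum_{j=1}^M v^j(t-t_n^j,\ \bullet-x_n^j)+W_n^M(t),
$$
to show that $\tilde u_n$ is (for $M$ fixed but large) an approximate solution of \eqref{E:NLS} whose Strichartz data agree with those of $u_n$ at $t=0$, to invoke the perturbation Lemma \ref{L:pt} to get $\|u_n-\tilde u_n\|_{L^\infty_{[0,T]}H^1_x}\to 0$, and finally to read off \eqref{E:orth_flow} from the divergence of the parameters $(t_n^j,x_n^j)$. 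Note that $\tilde u_n(0)=\sum_{j=1}^M[\nls(-t_n^j)\psi^j](\bullet-x_n^j)+W_n^M=\phi_n$ exactly, that each summand of $\tilde u_n$ solves \eqref{E:NLS}, and hence that the residual $e_n:=i\partial_t\tilde u_n+\Delta\tilde u_n+|\tilde u_n|^2\tilde u_n$ consists only of cross terms. Fix $M$ so large that $\nls(t)W_n^M$ is global and $\limsup_n\|\nls(t)W_n^M\|_{S(\dot H^{1/2})}$ is below any prescribed threshold (Prop. \ref{P:pd}); put $J=\{j\le M:t_n^j=0\}$, set $L:=\sup_n\|\nabla u_n\|_{L^\infty_{[0,T]}L^2_x}<\infty$ and $m_0:=\sup_n\|\phi_n\|_{L^2}^2$, and pass to a subsequence so that $\ell:=\lim_n\|\nabla u_n\|_{L^\infty_{[0,T]}L^2_x}$ exists. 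For $j\notin J$ the hypotheses of Prop. \ref{P:pd} already give $v^j$ on the relevant half-line with finite $S(\dot H^{1/2})$-norm, so the only issue is to show that the profiles $v^j$ with $j\in J$ exist on all of $[0,T]$; a priori this is unclear, which is why a single application of Lemma \ref{L:pt} does not suffice.

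To handle this I would run the perturbation argument on short subintervals. By the local theory there are $\tau_0=\tau_0(L,m_0)>0$ and $A=A(L,m_0)<\infty$ such that any solution of \eqref{E:NLS} with $H^1$-norm $\le 2(\ell+1)+m_0^{1/2}$ at a time $t_*$ persists on $[t_*,t_*+\tau_0]$ with $\|\cdot\|_{S(\dot H^{1/2};[t_*,t_*+\tau_0])}\le A$ and $H^1$-norm remaining controlled. Partition $[0,T]$ into $I_0,\dots,I_{K-1}$ of length $\le\tau_0$, and prove by induction on $k$ the statement $H(k)$: for each $j\in J$, $v^j$ exists on $[0,k\tau_0]$ with $\|\nabla v^j(k\tau_0)\|_{L^2}\le\ell+1$ for all $n$ large, and $\|u_n(k\tau_0)-\tilde u_n(k\tau_0)\|_{H^1}\to 0$. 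The base case $H(0)$ holds because $v^j(0)=\psi^j$, $\|\nabla\psi^j\|_{L^2}=\|\nabla\nls(-t_n^j)\psi^j\|_{L^2}\le\|\nabla\phi_n\|_{L^2}+o_n(1)\le\ell+o_n(1)$ by \eqref{E:Hs-Pythag}, and $\tilde u_n(0)=\phi_n$. Assume $H(k)$. Then $H(k)$ and the $L^2$-version of \eqref{E:Hs-Pythag} bound the $H^1$-norm of $v^j$ ($j\in J$) at time $k\tau_0$ by $\ell+1+m_0^{1/2}+o_n(1)$, so by the local theory these profiles exist on $I_k$ with $\|\cdot\|_{S(\dot H^{1/2};I_k)}\le A$; together with the half-line bounds for $j\notin J$, the smallness of $\|\nls(t)W_n^M\|_{S(\dot H^{1/2})}$, and the asymptotic vanishing of the profile--profile cross terms in $\tilde u_n$, this gives $\|\tilde u_n\|_{S(\dot H^{1/2};I_k)}\le A$ for $n$ large. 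The residual $\|e_n\|_{S'(\dot H^{1/2};I_k)}$ is small for $n$ large, because the cross terms among the $v^j$'s vanish as $n\to\infty$ by the parameter divergence $|t_n^j-t_n^k|+|x_n^j-x_n^k|\to\infty$, and the cross terms involving $W_n^M(t)$ are controlled by our choice of $M$ together with an approximation argument exactly as in \cite[Prop.\ 5.4--5.5]{HR}, \cite{DHR}; the Strichartz mismatch $\|e^{it\Delta}(u_n(k\tau_0)-\tilde u_n(k\tau_0))\|_{S(\dot H^{1/2};I_k)}$ is $\lesssim\|u_n(k\tau_0)-\tilde u_n(k\tau_0)\|_{H^1}\to 0$ by $H(k)$. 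Lemma \ref{L:pt} then yields $\|u_n-\tilde u_n\|_{S(\dot H^{1/2};I_k)}\to 0$, which a standard Strichartz bootstrap at the $H^1$ level upgrades to $\|u_n-\tilde u_n\|_{L^\infty_{I_k}H^1}\to 0$; evaluating at the right endpoint of $I_k$ gives the last clause of $H(k+1)$. Finally, since $\|\nabla\tilde u_n(t)\|_{L^2}^2=\sum_{i\le M}\|\nabla v^i(t-t_n^i)\|_{L^2}^2+\|\nabla W_n^M(t)\|_{L^2}^2+o_n(1)$ at every time by the pseudo-orthogonality of the parameters, we get $\|\nabla v^j((k+1)\tau_0)\|_{L^2}^2\le\|\nabla\tilde u_n((k+1)\tau_0)\|_{L^2}^2+o_n(1)\le\|\nabla u_n((k+1)\tau_0)\|_{L^2}^2+o_n(1)\le\ell^2+o_n(1)$ for $j\in J$, hence $\le(\ell+1)^2$ for $n$ large, which by the local theory also extends $v^j$ to $[0,(k+1)\tau_0]$. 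Thus $H(k+1)$ holds; after $K$ steps all $v^j$ exist on $[0,T]$ and $\|u_n-\tilde u_n\|_{L^\infty_{[0,T]}H^1}\to 0$ (uniformly, there being finitely many $I_k$).

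It then remains to deduce \eqref{E:orth_flow}. Since $\|u_n-\tilde u_n\|_{L^\infty_{[0,T]}H^1}\to 0$ while $\|u_n\|_{L^\infty_{[0,T]}H^1}$ and $\|\tilde u_n\|_{L^\infty_{[0,T]}H^1}$ are bounded, $\|\nabla u_n(t)\|_{L^2}^2=\|\nabla\tilde u_n(t)\|_{L^2}^2+o_n(1)$ uniformly on $[0,T]$. Expanding $\|\nabla\tilde u_n(t)\|_{L^2}^2$ and using that the mixed inner products $\langle\nabla v^i(t-t_n^i,\bullet-x_n^i),\nabla v^j(t-t_n^j,\bullet-x_n^j)\rangle$ for $i\ne j$ and $\langle\nabla v^j(t-t_n^j,\bullet-x_n^j),\nabla W_n^M(t)\rangle$ all tend to $0$ uniformly for $t\in[0,T]$ --- the standard consequence of the parameter divergence together with approximation of $v^j$ (resp.\ of $W_n^M$) by space-time localized functions, noting that over $t\in[0,T]$ the profiles with $j\in J$ stay in a compact subset of $H^1$ while for $j\notin J$ the times $t-t_n^j$ escape to $\pm\infty$ where $v^j$ scatters --- we arrive at \eqref{E:orth_flow}.

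The step I expect to be the crux is the bootstrap of the previous two paragraphs establishing that the nonlinear profiles exist on all of $[0,T]$: the perturbation argument must be run on intervals short enough that the a priori $\dot H^1$ bound transferred from $u_n$ to the profiles via the Pythagorean identity at the left endpoint already forces, through the local theory alone, that $\tilde u_n$ is defined there with Strichartz norm $\le A$, so the perturbation constant $A=A(\ell,m_0)$ never degrades across the finitely many steps (had we instead tried to push directly to a time near a hypothetical blow-up of some profile, $A$ would blow up and the scheme would fail). The orthogonality bookkeeping for $e_n$ and for the cross terms in $\|\nabla\tilde u_n\|_{L^2}^2$ is routine given \cite{HR}, \cite{DHR}, the only mild care being the uniformity in $t$ indicated above.
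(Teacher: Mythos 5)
Your scheme for showing that the profiles with $t_n^j=0$ persist up to time $T$ (iterating the perturbation lemma over subintervals of length $\tau_0$, transferring the $\dot H^1$ bound from $u_n$ to the profiles at each left endpoint) is a workable variant of what the paper does (the paper instead introduces the maximal times $T^j$ on which $\|\nabla v^j\|_{L^\infty L^2}\leq 2B$ and closes a continuity argument at $\tilde T=\min_j T^j$). The genuine gap is in the step where you ``upgrade'' $\|u_n-\tilde u_n\|_{S(\dot H^{1/2};I_k)}\to 0$ to $\|u_n-\tilde u_n\|_{L^\infty_{I_k}H^1}\to 0$ by a ``standard Strichartz bootstrap at the $H^1$ level.'' Running Strichartz estimates on $\nabla(u_n-\tilde u_n)$ requires $\|\nabla e_n\|_{S'(L^2;I_k)}\to 0$, and your residual $e_n$ contains cross terms of the form $v^i\,\overline{v^j}\,\nabla W_n^M$ (the derivative landing on the remainder). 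For these there is no source of smallness: $\nabla W_n^M$ is merely \emph{bounded} in $L^\infty_tL^2_x$ (Prop.\ \ref{P:pd} explicitly does not claim $\|W_n^M\|_{H^1}\to 0$), the smallness $\|\nls(t)W_n^M\|_{S(\dot H^{1/2})}\to 0$ cannot be used once the derivative sits on $W_n^M$, and the only orthogonality available ($W_n^M\rightharpoonup 0$) is weak, which does not make such a product small in a dual Strichartz norm. So the claimed $H^1$-approximation is not established by the indicated means, and the final expansion of $\|\nabla\tilde u_n(t)\|_{L^2}^2$ has nothing to rest on.

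The paper avoids this entirely: it never asserts $H^1$-closeness of $u_n$ and $\tilde u_n$. It obtains only $\|u_n-\tilde u_n\|_{S(\dot H^{1/2};[0,\tilde T])}\lesssim\epsilon$, converts this to smallness of $\|u_n-\tilde u_n\|_{L^\infty L^4}$ by interpolating the small $L^\infty L^3$ norm against the merely bounded gradient, deduces the $L^4$ Pythagorean expansion \eqref{E:L4flow} along the flow, and then recovers the $\dot H^1$ expansion \eqref{E:orth_flow} by combining \eqref{E:L4flow} with the conserved-energy Pythagorean expansion \eqref{E:energy-Pythag} through the identity $\|\nabla u\|_{L^2}^2=2E[u]+\tfrac12\|u\|_{L^4}^4$. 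That detour through the energy is precisely the device that sidesteps the $\nabla W_n^M$ cross terms; your argument needs to be rerouted through it (or you must supply an independent proof that those cross terms vanish in $S'(L^2)$, which I do not believe is available).
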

\begin{proof}
Let $M_0$ be such that for $M\geq M_0$, we have
$\|\nls(t) \,W_n^M\|_{S(\dot H^{1/2})} \leq \delta_{\text{sd}}$
($\delta_{\text{sd}}$ is the small data scattering threshold defined
in \cite{HR}).  Reorder the first $M_0$ profiles and introduce an
index $M_2$, $0\leq M_2\leq M$, so that
\begin{enumerate}
\item
For each $1\leq j \leq M_2$, we have $t_n^j=0$.  If $M_2=0$, that
means there are no $j$ in this category.
\item
For each $M_2+1\leq j\leq M_0$, we have $|t_n^j|\to +\infty$.  If
$M_2=M_0$, that means there are no $j$ in this category.
\end{enumerate}
We then know from the profile construction that the $v^j(t)$ for $j>
M_0$ are scattering (in both time directions).  It follows from
Prop. \ref{P:pd} that for fixed $T$ and $M_2+1\leq j\leq M_0$, we
have $\|v^j(t-t_n^j)\|_{S(\dot H^{1/2}; [0,T])} \to 0$ as $n\to
+\infty$.  Indeed, consider the case $t_n^j \to +\infty$ and
$\|v^j(-t)\|_{S(\dot H^{1/2}; [0,+\infty))}<\infty$.  Then for
$q<\infty$, it is immediate from dominated convergence that
$\|v^j(-t)\|_{L_{[0,+\infty)}^q L_x^r}<\infty$ implies
$\|v^j(t-t_n^j)\|_{L_{[0,T]}^q L_x^r} \to 0$.  Since $v^j$ is
constructed in Prop. \ref{P:pd} via the existence of wave operators
\cite[Prop. 4.6]{HR} to converge in $H^1$ to a linear flow at
$-\infty$, it follows from the $L_x^3$ decay of the linear flow that
$\|v^j(t-t_n^j)\|_{L_{[0,T]}^\infty L_x^3} \to 0$.

Let $B=\max(1,\lim_n \|\nabla u_n(t)\|_{L_{[0,T]}^\infty
L_x^2})<\infty$.  For each $1\leq j \leq M_2$, define $T^j$ to be
the maximal forward time $\leq T$ on which $\|\nabla
v^j\|_{L_{[0,T^j]}^\infty L_x^2} \leq 2B$.  Let $\tilde
T=\min_{1\leq j\leq M_2} T^j$ (if $M_2=0$, then just take $\tilde
T=T$.)  We will begin by proving that \eqref{E:orth_flow} holds for
$T=\tilde T$.  It will then follow from \eqref{E:orth_flow} that for
each $1\leq j\leq M_2$, we have $T^j =T$, and hence, $\tilde T=T$.
Thus, for the remainder of the proof, we work on $[0,\tilde T]$.
For each $1\leq j\leq M_2$, we have
\begin{align*}
\|v^j(t)\|_{S(\dot H^{1/2};[0,\tilde T])}
&\lesssim \|v^j\|_{L_{[0,\tilde T]}^\infty L_x^3} + \|v^j\|_{L_{[0,\tilde T]}^4 L_x^6} \\
&\lesssim \|v^j\|_{L_{[0,\tilde T]}^\infty L_x^2}^{1/2} \|
\nabla v^j\|_{L^\infty_{[0,\tilde T]}L_x^2}^{1/2} + \tilde T^{1/4}
\| \nabla v^j\|_{L^\infty_{[0,\tilde T]}L_x^2}\\
&\lesssim \la \tilde T^{1/4}\ra B,
\end{align*}
where we have used that $\|v^j\|_{L^\infty_{[0,\tilde T]} L_x^2} =
\|\psi^j\|_{L_x^2} \leq \lim_n \|\phi_n\|_{L^2}$ by
\eqref{E:Hs-Pythag} with $s=0$.

Let
$$
\tilde u_n(x,t) = \sum_{j=1}^M v^j(x-x_n^j,t-t_n^j).
$$
Of course, $\tilde u_n$ also depends upon $M$ but we suppress this
dependence from the notation.  Also, let
$$
e_n = i\partial_t \tilde u_n + \Delta \tilde u_n + |\tilde
u_n|^2\tilde u_n.
$$

We now outline a series of claims, which we do not prove here since
the proofs closely follow the proof of \cite[Prop. 5.4]{HR}.

\medskip

\noindent\emph{Claim 1}. There exists $A=A(\tilde T)$ (independent
of $M$ but dependent on $\tilde T$) such that for all $M> M_0$,
there exists $n_0=n_0(M)$ such that for all $n> n_0$,
$$
\|\tilde u_n\|_{S(\dot H^{1/2};[0,\tilde T])} \leq A \, .
$$

\medskip

\noindent\emph{Claim 2}.  For each $M>M_0$ and $\epsilon>0$, there
exists $n_1=n_1(M,\epsilon)$ such that for $n>n_1$,
$$
\|e_n\|_{L_{[0,\tilde T]}^{10/3} L_x^{5/4}} \leq \epsilon \, .
$$

\medskip

\noindent\emph{Remark 3}. Note that since $u_n(0)-\tilde u_n(0) =
W_n^M$, there exists $M'=M'(\epsilon)$ sufficiently large so that
for each $M>M'$ there exists $n_2=n_2(M)$ such that $n>n_2$ implies
$$
\|e^{it\Delta}(u_n(0)-\tilde u_n(0))\|_{S(\dot H^{1/2};[0,\tilde
T])} \leq \epsilon \, .
$$

\medskip

Recall we are given $\tilde T$, and thus, by Claim 1, there is a
large number $A(\tilde T)$. Then the statement of Lemma \ref{L:pt}
gives us $\epsilon_0=\epsilon_0(A)$.  Now select an arbitrary
$\epsilon \leq \epsilon_0$, and obtain from Remark 3 an index
$M'=M'(\epsilon)$. Now select an arbitrary $M>M'$.  Set
$n'=\max(n_0,n_1,n_2)$. Then we conclude from Claims 1-2, Remark 3,
and Lemma \ref{L:pt}, that for $n>n'(M,\epsilon)$,
\begin{equation}
 \label{E:u-tildeu-close}
\|u_n-\tilde u_n\|_{S(\dot H^{1/2};[0,\tilde T])} \leq c(\tilde T)
\,  \epsilon \,,
\end{equation}
where $c=c(A)=c(\tilde T)$.

Now we prove \eqref{E:orth_flow} on $[0,\tilde T]$.  We know that
for each $1\leq j \leq M_2$, we have $\|\nabla
v^j(t)\|_{L_{[0,\tilde T]}^\infty L_x^2}\leq 2B$.  Let us discuss
$j\geq M_2+1$.  As we've noted, $\|v^j(t-t_n^j)\|_{S(\dot
H^{1/2};[0,\tilde T])}\to 0$ as $n\to +\infty$.  By the Strichartz
estimates, $\|\nabla v^j(t-t_n^j)\|_{L_{[0,\tilde T]}^\infty L_x^2}
\lesssim \|\nabla v^j(-t_n^j)\|_{L_x^2}$.
By the pairwise divergence of parameters,
\begin{align*}
\|\nabla \tilde u_n(t)\|_{L_{[0,\tilde T]}^\infty L_x^2}^2
&= \sum_{j=1}^{M_2} \|\nabla v^j(t)\|_{L_{[0,\tilde T]}^\infty L_x^2}^2
+ \sum_{j=M_2+1}^M \|\nabla v^j(t-t_n^j)\|_{L_{[0,\tilde T]}^\infty L_x^2}^2+o_n(1) \\
&\lesssim M_2B^2+ \sum_{j=M_2+1}^M \|\nabla \nls(-t_n^j)\,\psi^j\|_{L_x^2}^2 + o_n(1) \\
&\leq M_2B^2+ \|\nabla \phi_n\|_{L_x^2}^2 + o_n(1)\\
&\leq M_2B^2+B^2+o_n(1).
\end{align*}

From \eqref{E:u-tildeu-close}, we conclude that
\begin{align*}
\|u_n - \tilde u_n \|_{L_{[0,\tilde T]}^\infty L_x^4}
&\lesssim \|u_n - \tilde u_n\|_{L_{[0,\tilde T]}^\infty L_x^3}^{1/2}
\|\nabla (u_n-\tilde u_n)\|_{L_{[0,\tilde T]}^\infty L_x^2}^{1/2}\\
&\leq c(\tilde T)^{1/2}(M_2B^2+2B^2+o_n(1))^{1/4}\epsilon^{1/2}.
\end{align*}

An argument similar to the proof of \eqref{E:L4decomp} now
establishes that, for each $t\in [0,\tilde T]$,
\begin{equation}
 \label{E:L4flow}
\|u_n(t)\|_{L^4}^4 = \sum_{j=1}^M \|v^j(t-t_n^j)\|_{L^4}^4 +
\|W_n^M(t)\|_{L^4}^4 + o_n(1).
\end{equation}
By \eqref{E:energy-Pythag} and energy conservation
($E[\psi^j]=E[v^j(t-t_n^j)]$, etc.), we have
\begin{equation}
 \label{E:en-at-t}
E[u_n(t)] = \sum_{j=1}^M E[v^j(t-t_n^j)] + E[W_n^M(t)] + o_n(1).
\end{equation}
Combining \eqref{E:L4flow} and \eqref{E:en-at-t} gives \eqref{E:orth_flow}.
\end{proof}

\begin{lemma}[profile reordering]
 \label{L:exist-nonscat}
Suppose that $\phi_n=\phi_n(x)$ is an $H^1$ bounded sequence to
which we apply the Prop. \ref{P:pd} out to a given $M$. Let
$\lambda_0>1$.  Suppose that $M[\phi_n]=M[Q]$, $E[\phi_n]/E[Q] =
3\lambda_n^2-2\lambda_n^3$ with $\lambda_n\geq \lambda_0>1$ and
$\|\nabla \phi_n\|_{L^2}/\|\nabla Q\|_{L^2}\geq \lambda_n$ for each
$n$.  Then, the profiles can be reordered so that there exists
$1\leq M_1\leq M_2\leq M$ and
\begin{enumerate}
\item
 \label{I:type1}
For each $1\leq j\leq M_1$, we have $t_n^j=0$ and $v^j(t)\equiv
\nls(t)\psi^j$ does not scatter as $t\to +\infty$.  (In particular,
we are asserting the existence of at least one $j$ that falls into
this category.)

\item
 \label{I:type2}
For each $M_1+1\leq j \leq M_2$, we have $t_n^j=0$ and $v^j(t)$
scatters as $t\to +\infty$. (If $M_1=M_2$, there are no $j$ with
this property.)

\item
 \label{I:type3}
For each $M_2+1\leq j\leq M$, we have that $|t_n^j|\to +\infty$. (If
$M_2=M$, there are no $j$ with this property.)
\end{enumerate}
\end{lemma}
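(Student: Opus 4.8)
The reordering part is just bookkeeping. After passing to the subsequence furnished by Prop. \ref{P:pd} (so that for each $j$ exactly one of $t_n^j=0$, $t_n^j\to+\infty$, $t_n^j\to-\infty$ holds), first list the profiles with $t_n^j=0$ as indices $1,\dots,M_2$ and the profiles with $|t_n^j|\to+\infty$ as indices $M_2+1,\dots,M$; then, among $1,\dots,M_2$, list first those $j$ for which $v^j(t)=\nls(t)\psi^j$ has $\|v^j\|_{S(\dot H^{1/2};[0,+\infty))}=+\infty$ (does not scatter forward), as indices $1,\dots,M_1$, and then those for which this norm is finite, as indices $M_1+1,\dots,M_2$. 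This produces the claimed three-tier structure for whatever integer $M_1\ge 0$ results, so the whole substance of the lemma is the assertion $M_1\ge 1$, i.e. that at least one of the $t_n^j=0$ profiles fails to scatter forward. I may also enlarge $M$ (this never destroys a non-scattering profile) so that $\nls(t)W_n^M$ is global and below the small-data scattering threshold in $S(\dot H^{1/2})$.

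To prove $M_1\ge 1$ I would argue by contradiction: assume every profile with $t_n^j=0$ scatters forward. The profiles with $|t_n^j|\to+\infty$ are, by the construction in Prop. \ref{P:pd}, asymptotically linear in $H^1$ at the relevant infinity, so $E[\psi^j]=\tfrac12\|\nabla\psi^j\|_{L^2}^2\ge 0$; the mass and $\dot H^1$ Pythagorean identities \eqref{E:Hs-Pythag} (with $s=0$ and $s=1$), combined with $M[\phi_n]=M[Q]$, the energy decomposition \eqref{E:energy-Pythag}, and the control on $\|\nabla\phi_n\|_{L^2}$, force $M[\psi^j]E[\psi^j]<M[Q]E[Q]$ with $\psi^j$ in the first case of Prop. \ref{P:dichotomy}, so by Theorem \ref{T:scattering} these $v^j$ scatter forward as well; moreover, exactly as in the treatment of categories (1)--(2) in the proof of Lemma \ref{L:Pythag-flow} (using the $L^3_x$ decay of the linear flow), $\|v^j(\,\cdot\, - t_n^j)\|_{S(\dot H^{1/2};[0,T])}\to 0$ as $n\to\infty$ for each fixed $T$ and each $j$ with $|t_n^j|\to+\infty$. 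Hence the approximate solution $\tilde u_n(x,t)=\sum_{j=1}^M v^j(x-x_n^j,\,t-t_n^j)$ is globally defined in forward time and, by the pairwise divergence of the parameters, satisfies $\|\tilde u_n\|_{S(\dot H^{1/2};[0,+\infty))}\le A$ with $A$ independent of $n$ (each profile contributes at most its own finite $S(\dot H^{1/2})$ norm, the $t_n^j\to-\infty$ contributions in fact tending to $0$), while $e_n=i\partial_t\tilde u_n+\Delta\tilde u_n+|\tilde u_n|^2\tilde u_n$ is small in the dual Strichartz norm and $e^{it\Delta}(u_n(0)-\tilde u_n(0))=e^{it\Delta}W_n^M$ is small in $S(\dot H^{1/2};[0,+\infty))$ once $M$ and then $n$ are large. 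This is precisely the architecture of Claims 1--2 and Remark 3 in the proof of Lemma \ref{L:Pythag-flow} (and of \cite[Prop.~5.4--5.5]{HR}), carried out on $[0,+\infty)$ rather than on a finite interval. Feeding it into the perturbation Lemma \ref{L:pt} with $T=+\infty$ yields, for $n$ large, that $u_n(t)=\nls(t)\phi_n$ is globally defined in forward time with $\|u_n\|_{S(\dot H^{1/2};[0,+\infty))}<\infty$; in particular $u_n$ scatters as $t\to+\infty$.

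This contradicts the hypotheses. Scattering gives $\|u_n(t)\|_{L^4}\to 0$ (cf. \eqref{E:L4asymp}), whence, exactly as in the derivation of \eqref{E:H1asymp}, $\|\nabla u_n(t)\|_{L^2}^2=2E[\phi_n]+\tfrac12\|u_n(t)\|_{L^4}^4\to 2E[\phi_n]$ as $t\to+\infty$. On the other hand, since $M[\phi_n]=M[Q]$, $E[\phi_n]/E[Q]=3\lambda_n^2-2\lambda_n^3$, and $\|\nabla\phi_n\|_{L^2}/\|\nabla Q\|_{L^2}\ge\lambda_n>1$, Prop. \ref{P:dichotomy} places $u_n$ in its second case, so $\|\nabla u_n(t)\|_{L^2}^2\ge\lambda_n^2\|\nabla Q\|_{L^2}^2=6\lambda_n^2 E[Q]$ for all $t$ (using mass conservation and \eqref{E:ME}). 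Letting $t\to+\infty$ gives $2E[\phi_n]\ge 6\lambda_n^2 E[Q]$, i.e. $E[\phi_n]/E[Q]\ge 3\lambda_n^2$, incompatible with $E[\phi_n]/E[Q]=3\lambda_n^2-2\lambda_n^3<3\lambda_n^2$ (recall $E[Q]=\tfrac16\|\nabla Q\|_{L^2}^2>0$ and $\lambda_n>0$). Hence $M_1\ge 1$.

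I expect the main obstacle to be the global-in-time perturbation step in the second paragraph: one must verify that the sum of the nonlinear profiles has finite $S(\dot H^{1/2})$ norm on all of $[0,+\infty)$ uniformly in $n$, which forces one to show that the $|t_n^j|\to+\infty$ profiles themselves scatter in forward time (so as not to pollute that norm) before Lemma \ref{L:pt} can be invoked on the unbounded half-line. This is where the mass/energy bookkeeping from the Pythagorean decompositions, together with the asymptotic linearity built into Prop. \ref{P:pd}, is essential and is the only step requiring genuine work; the reordering and the concluding energy comparison are routine.
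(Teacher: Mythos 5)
Your proposal is correct in outline but takes a genuinely different, and substantially heavier, route than the paper's. The paper never globalizes the perturbation argument. It first notes that the hypotheses and the Pohozhaev identities force $\|\phi_n\|_{L^4}^4\geq \lambda_n^3\|Q\|_{L^4}^4>\|Q\|_{L^4}^4$, so by the $L^4$ expansion \eqref{E:L4decomp} at $t=0$ at least one $t_n^j$ must converge (a step your argument subsumes); then, assuming for contradiction that every $t_n^j=0$ profile scatters forward, it picks one large fixed time $t_0$ at which all these profiles are small in $L^4$ and evaluates the $L^4$ Pythagorean expansion along the flow \eqref{E:L4flow} at $t=t_0$, contradicting the lower bound $\|u_n(t_0)\|_{L^4}^4\geq\lambda_n^3\|Q\|_{L^4}^4$ that case (2) of Prop.~\ref{P:dichotomy} propagates to all times. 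This uses only the finite-interval machinery already established for Lemma~\ref{L:Pythag-flow}. Your route instead reconstructs $u_n$ on all of $[0,+\infty)$ and feeds it into Lemma~\ref{L:pt} with $T=+\infty$; this yields the stronger conclusion that $u_n$ scatters forward, but at the cost of (i) a uniform-in-$M$, uniform-in-$n$ bound on $\|\tilde u_n\|_{S(\dot H^{1/2};[0,+\infty))}$ --- essentially re-running the compactness step of the scattering theorem of \cite{HR}, \cite{DHR} --- and (ii) proving forward scattering of the $|t_n^j|\to+\infty$ profiles, which the paper never needs (it only needs $\|v^j(t_0-t_n^j)\|_{L^4}\to 0$ for fixed $t_0$, immediate from asymptotic linearity). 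There is also the standard technicality, which you gloss over, that Lemma~\ref{L:pt} on $[0,+\infty)$ must be applied by exhaustion over $[0,T']$ with $T'<T^*(u_n)$ together with persistence of regularity, since $u_n$ is not known a priori to be forward-global.

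One justification in step (ii) is wrong as written, though the conclusion is salvageable. You claim that the $\dot H^1$ Pythagorean identity and ``the control on $\|\nabla\phi_n\|_{L^2}$'' place the $|t_n^j|\to+\infty$ profiles in the first case of Prop.~\ref{P:dichotomy}; but the lemma assumes only $H^1$-boundedness of $\phi_n$, with no upper bound on $\|\nabla\phi_n\|_{L^2}/\|\nabla Q\|_{L^2}$, so \eqref{E:Hs-Pythag} with $s=1$ cannot by itself put $\psi^j$ below the gradient threshold. What does work: since $v^j$ is asymptotically linear at the relevant time-infinity, $\|\nabla v^j(-t_n^j)\|_{L^2}^2\to 2E[\psi^j]$, so the quantity \eqref{E:eta-def} for $v^j$ evaluated at time $-t_n^j$ tends to $\bigl(\tfrac13 M[\psi^j]E[\psi^j]/M[Q]E[Q]\bigr)^{1/2}<1$; equivalently, a solution with $M[v]E[v]<M[Q]E[Q]$ that scatters in one time direction cannot lie in case (2) of Prop.~\ref{P:dichotomy}, hence lies in case (1) and scatters in both directions by Theorem~\ref{T:scattering}. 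With that repair, and with the global Strichartz bound of (i) supplied as in \cite[Prop.~5.4--5.5]{HR}, your argument closes; the concluding energy comparison is correct.
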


\begin{proof}
We first prove that there exists at least one $j$ such that $t_n^j$
converges as $n\to +\infty$. Indeed, it follows that
\begin{align*}
\frac{\|\phi_n\|_{L^4}^4}{\|Q\|_{L^4}^4}
& =-\frac{E[\phi_n]}{2E[Q]}+ \frac{3\|\phi_n\|_{L^2}^2}{2\|\nabla Q\|_{L^2}^2}\\
& \geq -\frac12(3\lambda_n^2-2\lambda_n^3) + \frac32\lambda_n^2\\
& = \lambda_n^3 \geq \lambda_0^3>1.
\end{align*}
Now if $j$ is such that $|t_n^j|\to \infty$, then
$\|\nls(-t_n^j)\psi^j\|_{L^4} \to 0$.  The claim now follows from
\eqref{E:L4decomp}.  Note that if $j$ is such that $t_n^j$ converges
as $n\to +\infty$, then we might as well WLOG assume that $t_n^j=0$
(see also footnote 7).

Reorder the profiles $\psi^j$ so that for $1\leq j \leq M_2$, we
have $t_n^j=0$, and for $M_2+1\leq j\leq M$, we have $|t_n^j|\to
\infty$. It only remains to show that there exists one $j$, $1\leq j
\leq M_2$ such that $v^j(t)$ is nonscattering.  If not, then for all
$1\leq j\leq M_2$, we have that all $v^j$ are scattering, and thus,
$\lim_{t\to +\infty} \|v^j(t)\|_{L^4} =0$.  Let $t_0$ be large
enough so that, for all $1\leq j\leq M_2$, we have
$\|v^j(t_0)\|_{L^4}^4 \leq \epsilon/M_2$.  By the $L^4$
orthogonality \eqref{E:L4flow} along the $\nls$ flow, we have
\begin{align*}
\lambda_0^3\|Q\|_{L^4}^4 &\leq \|u_n(t_0)\|_{L^4}^4 \\
&= \sum_{j=1}^{M_2} \|v^j(t_0)\|_{L^4}^4 + \sum_{j=M_2+1}^M
\|v^j(t_0-t_n^j)\|_{L_x^4}^4 + \|W_n^M(t_0)\|_{L_x^4}^4 + o_n(1).
\intertext{As $n\to +\infty$, we have $\sum_{j=M_2+1}^M
\|v^j(t_0-t_n^j)\|_{L_x^4}^4\to 0$, and thus, the last line} &\leq
\epsilon + \|W_n^M(t_0)\|_{L_x^4}^4 + o_n(1).
\end{align*}
This gives a contradiction.
\end{proof}

\section{Outline of the inductive argument}
\label{S:outline-induction}

Having developed several preliminaries in \S
\ref{S:prelim}--\ref{S:pd}, we now begin the proof of Theorem
\ref{T:main}.

Consider the following statement:

\begin{definition}
Let $\lambda >1$.  We say that $\egb(\lambda, \sigma)$ {\rm holds}
if there exists a solution $u(t)$ to NLS such that
$$
M[u]=M[Q], \quad \frac{E[u]}{E[Q]}=3\lambda^2-2\lambda^3 \,,
$$
and
$$
\lambda \leq \frac{\|\nabla u(t)\|_{L^2}}{\|\nabla Q\|_{L^2}} \leq
\sigma \quad \text{for all }t\geq 0.
$$
\end{definition}

$\egb(\lambda,\sigma)$ can be read ``there \underline{e}xist
solutions at energy $3\lambda^2-2\lambda^3$ \underline{g}lobally
\underline{b}ounded by $\sigma$.''

By Prop. \ref{P:nbc}, $\egb(\lambda, \lambda(1+\rho_0(\lambda_0)))$
is \emph{false} for all $\lambda \geq \lambda_0>1$.

Note that the statement ``$\egb(\lambda,\sigma)$ is false'' is
equivalent to the statement:  For every solution $u(t)$ to NLS such
that $M[u]=M[Q]$ and $E[u]/E[Q]=3\lambda^2-2\lambda^3$ such that
$\lambda \leq \|\nabla u(t)\|_{L^2}/\|\nabla Q\|_{L^2}$ for all $t$,
there exists a time $t_0\geq 0$ such that $\|\nabla
u(t_0)\|_{L^2}/\|\nabla Q\|_{L^2} \geq \sigma$.  (In fact, there
exists a sequence $t_n \to +\infty$ such that $\|\nabla
u(t_n)\|_{L^2}/\|\nabla Q\|_{L^2} \geq \sigma$ for all $n$.  This
follows by resetting the initial time.)

We will induct on the statement ``$\egb(\lambda,\sigma)$ is false.''
Note that if $\lambda \leq \sigma_1\leq \sigma_2$, then
``$\egb(\lambda,\sigma_2)$ is false'' implies ``$\egb(\lambda,
\sigma_1)$ is false'', as is easily understood by writing down the
contrapositive.  We now define a threshold -- see the illustration
in Figure \ref{F:critical}.

\begin{figure}[h]
\includegraphics[width=400pt]{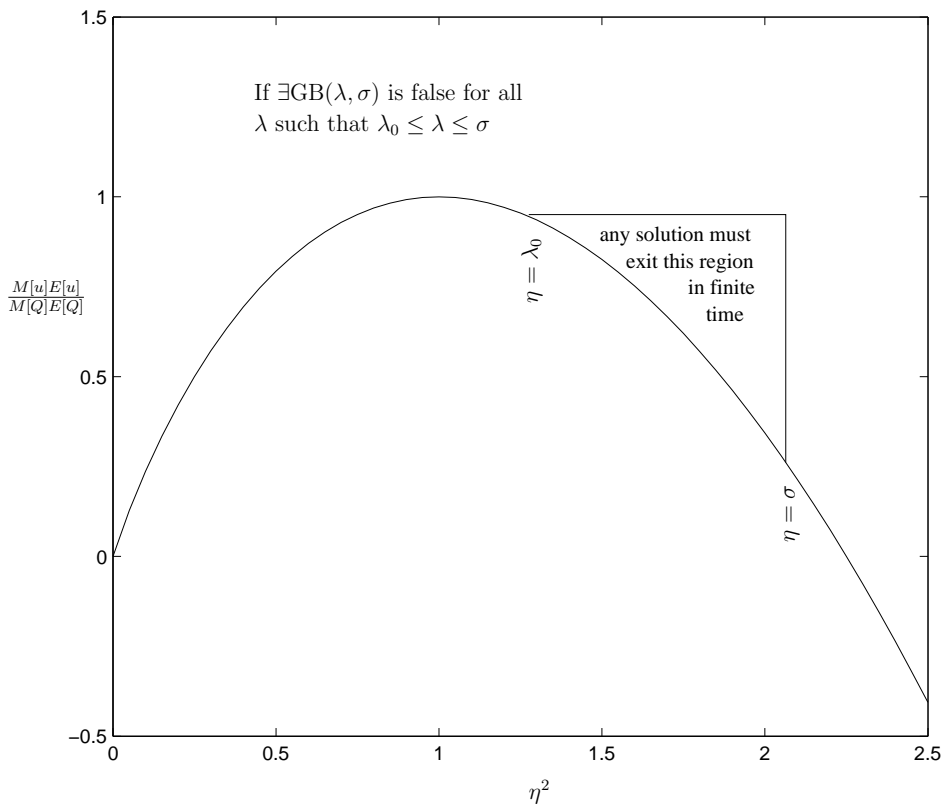}
\caption{A depiction of the meaning of the statement
``$\egb(\lambda,\sigma)$ is false for all $\lambda$ such that
$\lambda_0\leq \lambda\leq \sigma$.''  It means that for any
solution $u(t)$ with $\eta(t)>\lambda$ (when $\lambda$ is defined by
\eqref{E:energy-lambda}) if the path
$(\eta(t),3\lambda^2-2\lambda^3))$ is plotted here, it must escape
(along the horizontal line) the indicated triangular region at some
finite time.  The value $\sigma_c$ is the largest $\sigma$ for which
this statement holds. } \label{F:critical}
\end{figure}

\begin{definition}[The critical threshold]
Fix $\lambda_0>1$.  Let $\sigma_c=\sigma_c(\lambda_0)$ be the
supremum of all $\sigma>\lambda_0$ such that $\egb(\lambda,\sigma)$
\emph{is false} for all $\lambda$ such that $\lambda_0\leq \lambda
\leq \sigma$.  The notation $\sigma_c$ stands for
``$\sigma$-critical.''
\end{definition}

By Prop. \ref{P:nbc}, we know that $\sigma_c(\lambda_0) >\lambda_0$.

Suppose $\lambda_0>1$ and $\sigma_c(\lambda_0)=\infty$. Let $u(t)$
be any solution with $E[u]/E[Q]\leq 3\lambda_0^2-2\lambda_0^3$,
$M[u]=M[Q]$, and $\|\nabla u_0\|_{L^2}/ \|\nabla Q\|_{L^2}>1$.  We
claim there exists a sequence of times $t_n$ such that $\|\nabla
u(t_n)\|_{L^2}\to \infty$.  Indeed, suppose not, and let
$\lambda\geq \lambda_0$ be such that $E[u]/E[Q] =
3\lambda^2-2\lambda^3$.  Since there is no sequence $t_n$ along
which $\|\nabla u(t_n)\|_{L^2}\to +\infty$, there exists
$\sigma<\infty$ such that $\lambda \leq \|\nabla
u(t)\|_{L^2}/\|\nabla Q\|_{L^2} \leq \sigma$ for all $t\geq 0$.  But
this means that $\egb(\lambda, \sigma)$ holds true, and thus,
$\sigma_c(\lambda_0) \leq \sigma<\infty$.  Thus, in order to prove
our theorem, we need to show that for every $\lambda_0>1$, we have
$\sigma_c(\lambda_0)=\infty$.

Hence, we shall now fix $\lambda_0>1$ and assume that
$\sigma_c(\lambda_0)<\infty$, and work toward a contradiction.
Clearly, it suffices to do this for $\lambda_0$ close to $1$, and
thus, we shall make the assumption that $\lambda_0<\frac32$.  As
we'll see, this will be convenient later.

\section{Existence of a critical solution}

\begin{lemma}[Existence of a critical solution]
\label{L:exist-critical} There exist initial data $u_{c,0}$ and
$\lambda_c\in [\lambda_0, \sigma_c(\lambda_0)]$ such that
$u_c(t)\equiv \nls(t)\,u_{c,0}$ is global, $M[u_c]=M[Q]$, $E[u_c]=
3\lambda_c^2-2\lambda_c^3$ and
$$
\lambda_c \leq \frac{\|\nabla u_c(t)\|_{L^2}}{\|\nabla Q\|_{L^2}}
\leq \sigma_c \quad \text{for all }t\geq 0 \,.
$$
\end{lemma}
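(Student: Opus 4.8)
The plan is a standard minimal-blowup-solution construction in the spirit of Kenig--Merle, adapted to the present setting. By definition of $\sigma_c = \sigma_c(\lambda_0)$, for every $\sigma < \sigma_c$ the statement $\egb(\lambda,\sigma)$ is false for all $\lambda \in [\lambda_0,\sigma]$, but $\sigma_c$ is not such a value (since it is the supremum). Hence for each $n$ we may choose $\sigma_n \nearrow \sigma_c$ with $\sigma_n > \sigma_c - \frac1n$ (assuming $\sigma_c < \infty$, as we have) and values $\lambda_n \in [\lambda_0, \sigma_n]$ such that $\egb(\lambda_n, \sigma_n)$ \emph{holds}; that is, there is a solution $u_n(t)$ with $M[u_n] = M[Q]$, $E[u_n]/E[Q] = 3\lambda_n^2 - 2\lambda_n^3$, and
$$
\lambda_n \leq \frac{\|\nabla u_n(t)\|_{L^2}}{\|\nabla Q\|_{L^2}} \leq \sigma_n \quad \text{for all } t \geq 0.
$$
Passing to a subsequence, $\lambda_n \to \lambda_c \in [\lambda_0, \sigma_c]$. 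The sequence $\phi_n \equiv u_n(0)$ is bounded in $H^1$ (mass is fixed, gradient is bounded by $\sigma_c \|\nabla Q\|_{L^2}$), so we may apply the profile decomposition Proposition \ref{P:pd}. Since $M[\phi_n] = M[Q]$, $E[\phi_n]/E[Q] = 3\lambda_n^2 - 2\lambda_n^3$ with $\lambda_n \geq \lambda_0 > 1$, and $\|\nabla\phi_n\|_{L^2}/\|\nabla Q\|_{L^2} \geq \lambda_n$, Lemma \ref{L:exist-nonscat} applies and gives us the reordering: there is at least one profile $\psi^1$ with $t_n^1 = 0$ such that $v^1(t) = \nls(t)\psi^1$ does not scatter as $t \to +\infty$.

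The heart of the argument is showing that there is in fact \emph{only one} profile, and that it carries all the mass and energy, so that (up to symmetries) $\phi_n \to u_{c,0}$ strongly in $H^1$ and $W_n^M \to 0$. First I would argue that $v^1$ cannot be global-and-bounded in the $\egb$ sense: if it were, one could combine the energy Pythagorean identity \eqref{E:energy-Pythag} with the mass Pythagorean identity \eqref{E:Hs-Pythag} ($s=0$) to conclude that either $M[\psi^1] < M[Q]$ or $E[\psi^1] < E[Q](3\lambda_c^2 - 2\lambda_c^3)$ strictly unless all other profiles and $W_n^M$ vanish. The key subadditivity input comes from comparing where the data sits relative to the dichotomy curve (Figure \ref{F:dichotomy}): since each $\phi_n$ lies on the upper branch $\eta^2 \geq \lambda_n^2$ at mass-energy value $3\lambda_n^2 - 2\lambda_n^3 < 1$, and since the map $\lambda \mapsto 3\lambda^2 - 2\lambda^3$ is decreasing for $\lambda > 1$, any splitting of mass/energy among profiles would push some profile to have strictly smaller mass-energy product, hence (after rescaling off its mass to $M[Q]$ via Remark \ref{R:rescale}) a strictly larger parameter $\tilde\lambda$; using Lemma \ref{L:Pythag-flow} to propagate the $\dot H^1$ orthogonality along the flow, this would force $v^1$ to be bounded below by $\tilde\lambda > \lambda_c$ and globally bounded above by $\sigma_c$ — i.e., $\egb(\tilde\lambda, \sigma_c)$ would hold with $\tilde\lambda > \lambda_0$, making $\sigma_c$ \emph{not} a valid threshold value, contradiction. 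Running this carefully forces $M[\psi^1] = M[Q]$, $E[\psi^1] = E[Q](3\lambda_c^2 - 2\lambda_c^3)$, all other profiles zero, and $\|W_n^M\|_{H^1} \to 0$.

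Once we have $\phi_n \to u_{c,0}$ strongly in $H^1$ (modulo a spatial translation we may discard), set $u_c(t) = \nls(t)\,u_{c,0}$. Continuity of the flow plus the uniform bounds on the $u_n$ give that $u_c$ is global forward in time with $M[u_c] = M[Q]$, $E[u_c]/E[Q] = 3\lambda_c^2 - 2\lambda_c^3$, and $\lambda_c \leq \|\nabla u_c(t)\|_{L^2}/\|\nabla Q\|_{L^2} \leq \sigma_c$ for all $t \geq 0$ — here one uses Lemma \ref{L:Pythag-flow} (with the single surviving profile) to get that the bound $\|\nabla u_n(t)\|_{L^2} \leq \sigma_n\|\nabla Q\|_{L^2}$ passes, uniformly in $t$, to the limit. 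I expect the main obstacle to be the subadditivity bookkeeping in the previous paragraph: one must track simultaneously mass, energy, and the $\dot H^1$ norm along the flow, and correctly translate a ``profile splitting'' into a violation of the criticality of $\sigma_c$ rather than merely of $\egb$ falseness — the subtlety being that a small profile, after mass rescaling, could in principle have parameter $\tilde\lambda$ below $\lambda_0$, so one needs the quantitative gap provided by $\lambda_n \geq \lambda_0$ and the strict monotonicity of $3\lambda^2 - 2\lambda^3$ to rule that out (and this is exactly where the harmless assumption $\lambda_0 < \tfrac32$ made at the end of \S\ref{S:outline-induction} is meant to help).
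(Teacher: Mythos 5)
Your setup (the minimizing sequence $\sigma_n\searrow\sigma_c$, $\lambda_n\to$ limit, the profile decomposition of $\phi_n=u_{n,0}$, the reordering via Lemma \ref{L:exist-nonscat}, and the use of Lemma \ref{L:Pythag-flow} to propagate orthogonality along the flow) matches the paper. But the central step --- forcing the decomposition to collapse to the single profile $\psi^1$ --- is carried out with a faulty contradiction. You argue that a genuine splitting would give $\psi^1$ a strictly smaller mass--energy product, hence a larger parameter $\tilde\lambda$, so that $\egb(\tilde\lambda,\sigma_c)$ would hold, ``making $\sigma_c$ not a valid threshold value.'' This is not a contradiction: $\sigma_c$ is defined as the \emph{supremum} of the $\sigma$ for which $\egb(\lambda,\sigma)$ is false on $[\lambda_0,\sigma]$, so nothing in its definition forbids $\egb(\tilde\lambda,\sigma_c)$ from holding at $\sigma=\sigma_c$ itself --- indeed, the entire point of the lemma is to \emph{produce} a solution witnessing $\egb(\lambda_c,\sigma_c)$. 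To contradict the definition you would need $\egb(\tilde\lambda,\sigma')$ for some $\sigma'$ \emph{strictly less than} $\sigma_c$ with $\lambda_0\le\tilde\lambda\le\sigma'$, and you establish neither the strict gap $\sigma'<\sigma_c$ nor the upper bound $\tilde\lambda\le\sigma'$. (Your instinct that $\lambda_0<\tfrac32$ is needed to guarantee $\tilde\lambda\ge\lambda_0$ in the negative-energy case is correct, but it addresses a different point.)

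The paper's mechanism is the reverse of yours: rather than deriving a contradiction from a hypothetical splitting, it applies the inductive hypothesis \emph{to the profile $v^1$ itself}. Having shown that $M[\psi^1]E[\psi^1]/M[Q]E[Q]=3\lambda_1^2-2\lambda_1^3$ for some $\lambda_1\ge\lambda_0$ and that $v^1$ lies in case (2) of the dichotomy (it is nonscattering, so Theorem \ref{T:scattering} excludes case (1)), one splits into $\lambda_1\le\sigma_c$ and $\lambda_1>\sigma_c$. In the first case, $\egb(\lambda_1,\sigma_c-\delta)$ is false for every $\delta>0$, so there exist times $t_k$ with $\|v^1(t_k)\|_{L^2}\|\nabla v^1(t_k)\|_{L^2}/\|Q\|_{L^2}\|\nabla Q\|_{L^2}\ge\sigma_c-o_k(1)$; inserting these times into the $\dot H^1$ Pythagorean expansion of $\|\nabla u_n(t_k)\|_{L^2}^2\le\sigma_n^2\|\nabla Q\|_{L^2}^2$ squeezes the chain of inequalities into equalities, which kills all $\psi^j$ for $j\ge2$ and forces $W_n^M\to0$ and $M[\psi^1]=M[Q]$; the upper bound $\|\nabla v^1(t)\|_{L^2}\le\sigma_c\|\nabla Q\|_{L^2}$ then also follows from Lemma \ref{L:Pythag-flow}. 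The second case $\lambda_1>\sigma_c$ is ruled out by the same Pythagorean chain evaluated at $t=0$. You should restructure the collapse step along these lines; as written, the argument does not close.
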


We have that for all $\sigma<\sigma_c$ and all $\lambda_0\leq
\lambda \leq \sigma$, $\egb(\lambda,\sigma)$ is false, i.e., there
are no solutions $u(t)$ for which $M[u]=M[Q]$,
$E[u]/E[Q]=3\lambda^2-2\lambda^3$ and
$$
\lambda \leq \frac{\|\nabla u(t)\|_{L^2}}{\|\nabla Q\|_{L^2}} \leq
\sigma \quad \text{for all }t\geq 0.
$$
But on the other hand, we have found a solution $u_c(t)$ such that
$M[u_c]=M[Q]$, $E[u_c]= 3\lambda_c^2-2\lambda_c^3$ and
$$
\lambda_c \leq \frac{\|\nabla u_c(t)\|_{L^2}}{\|\nabla Q\|_{L^2}}
\leq \sigma_c \quad \text{for all }t\geq 0 \,.
$$
Thus, we call this the ``critical solution'' or ``threshold
solution''. In \S\ref{S:concentration}, we shall show that these
properties induce a uniform-in-time concentration property of
$u_c(t)$, and  we then observe that all of the alleged properties of
$u_c(t)$ are inconsistent with the local virial identity (in
particular, Prop. \ref{P:local-blowup-time}).

\begin{proof}
By definition of $\sigma_c$, there exist sequences $\lambda_n$ and
$\sigma_n$ such that $\lambda_0\leq \lambda_n\leq \sigma_n$ and
$\sigma_n \searrow \sigma_c$ for which $\egb(\lambda_n, \sigma_n)$
holds.  This means that there exists $u_{n,0}$ with
$u_n(t)=\nls(t)\,u_{n,0}$ such that $u_n(t)$ is global,
$M[u_n]=M[Q]$, $E[u_n]/E[Q] =3\lambda_n^2-2\lambda_n^3$, and
$$
\lambda_n \leq \frac{\|\nabla u_n(t)\|_{L^2}}{\|\nabla Q\|_{L^2}}
\leq \sigma_n.
$$
Since $\lambda_n$ is bounded, we can pass to a subsequence such that
$\lambda_n$ converges.  Let $\lambda'=\lim_n \lambda_n$.  We know,
of course, that $\lambda_0\leq \lambda' \leq \sigma_c$.

In Lemma \ref{L:exist-nonscat}, take $\phi_n=u_{n,0}$, and
henceforth adopt the notation from that lemma.  For $M_1+1\leq j
\leq M_2$, the $v^j(t)$ scatter as $t\to +\infty$ and for $M_2+1\leq
j \leq M$, the $v^j$ also scatter in one or the other time direction
-- see Prop. \ref{P:pd}.  Thus, for all $M_1+1\leq j\leq M$, we have
$E[\psi^j]=E[v^j]\geq 0$.  By \eqref{E:energy-Pythag},
$$
\sum_{j=1}^{M_1} E[\psi^j] \leq E[\phi_n] + o_n(1).
$$
For at least one $1\leq j \leq M_1$, we have
$$
E[\psi^j] \leq \max\left( \lim_n  E[\phi_n], 0\right).
$$
We might as well take, WLOG, $j=1$.  Since we also have
$M[\psi^1]\leq \lim_n M[\phi_n]=M[Q]$, we have
$$
\frac{M[\psi^1]E[\psi^1]}{M[Q]E[Q]}\leq \max\left( \lim_n
\frac{E[\phi_n]}{E[Q]},0\right).
$$
Thus,
$$
\frac{M[\psi^1]E[\psi^1]}{M[Q]E[Q]} = 3\lambda_1^2-2\lambda_1^3.
$$
for some $\lambda_1 \geq \lambda_0$.
\footnote{This $\lambda_1$ is of course different from the $\lambda_1$ in
the sequence $\lambda_n$ used above.}  (In the case $\lim_n
E[\phi_n]\geq 0$, we will have $\lambda_1\geq \lambda'\geq
\lambda_0$.  In the case $\lim_n E[\phi_n]<0$, we will have
$\lambda_1>\frac32>\lambda_0$ but might not have $\lambda_1\geq
\lambda'$).  Since $v^1$ is a nonscattering solution, we cannot have
$\|\psi^1\|_{L^2}\|\nabla \psi^1\|_{L^2}<\|Q\|_{L^2}\|\nabla
Q\|_{L^2}$, since it would contradict Theorem  \ref{T:scattering}.
We therefore must have $\|\psi^1\|_{L^2}\|\nabla \psi^1\|_{L^2} >
\lambda_1 \|Q\|_{L^2}\|\nabla Q\|_{L^2}$.

Two cases emerge:

\medskip

\noindent\emph{Case 1}. $\lambda_1\leq \sigma_c$.  Since
$\egb(\lambda_1,\sigma_c-\delta)$ is false for each $\delta>0$ (the
inductive hypothesis), there exists a nondecreasing sequence $t_k$
of times such that
$$
\lim_k \frac{\|v^1(t_k)\|_{L^2}\|\nabla
v^1(t_k)\|_{L^2}}{\|Q\|_{L^2}\|\nabla Q\|_{L^2}} \geq \sigma_c.
$$
Hence,
\begin{equation}
\label{E:100}
\begin{aligned}
\sigma_c^2 - o_k(1)
& \leq \frac{\|v^1(t_k)\|_{L^2}^2\|\nabla v^1(t_k)\|_{L^2}^2}{\|Q\|_{L^2}^2\|\nabla Q\|_{L^2}^2} \\
&\leq \frac{\|\nabla v^1(t_k)\|_{L^2}^2}{\|\nabla Q\|_{L^2}^2} \\
&\leq \frac{\sum_{j=1}^M\|\nabla v^j(t_k-t_n^j)\|_{L^2}^2+\|\nabla W_n^M(t_k)\|_{L^2}^2}{\|\nabla Q\|_{L^2}^2} \quad \text{(recall that $t_n^1=0$)}\\
&\leq \frac{\|\nabla u_n(t)\|_{L^2}^2}{\|\nabla Q\|_{L^2}^2} + o_n(1)
\quad \text{(by Lemma \ref{L:Pythag-flow}, taking $n=n(k)$ large)}\\
&\leq \sigma_c^2+o_n(1).
\end{aligned}
\end{equation}
Send $k\to+\infty$ (and hence, $n(k)\to +\infty$).  We conclude that
all inequalities must be equalities.  In particular, we conclude
that $W_n^M(t_k)\to 0$ in $H^1$ norm \footnote{This implies that
$W_n^M(0)=W_n^M \to 0$ in $H^1$, since we know that $W_n^M(t)$ is a
scattering solution and have the bounds depicted in Fig.
\ref{F:dichotomy}. We do not need this observation for the current
proof, but do for the proof of Lemma \ref{L:compactness}.}, that
$v^j\equiv 0$ for all $j\geq 2$, and that $M[v_1]=M[Q]$.  Moreover,
by Lemma \ref{L:Pythag-flow}, we have that for all $t$,
$$
\frac{\|\nabla v^1(t)\|_{L_x^2}}{\|\nabla Q\|_{L^2}} \leq \lim_n
\frac{\|u_n(t)\|_{L_x^2}}{\|\nabla Q\|_{L^2}} \leq \sigma_c.
$$
Hence, we take $u_{c,0}=v^1(0)$ ($=\psi^1$), $\lambda_c=\lambda_1$.

\medskip

\noindent\emph{Case 2}.  $\lambda_1> \sigma_c$. Then we do not have
access to the inductive hypothesis, but we do know that for all $t$,
$$
\lambda_1^2 \leq \frac{\|v^1(t)\|_{L^2}^2\|\nabla
v^1(t)\|_{L^2}^2}{\|Q\|_{L^2}^2\|\nabla Q\|_{L^2}^2}.
$$
Replace the first line of \eqref{E:100} by the above inequality; the
rest of the inequalities in \eqref{E:100} still hold (we might as
well now take $t_k=0$).  Send $n\to +\infty$ to get $\lambda_1\leq
\sigma_c$, a contradiction.  Thus, this case does not arise.

\end{proof}

\section{Concentration of critical solutions}
\label{S:concentration}

In this section, we take  $u(t)=u_c(t)$ to be a critical solution,
as provided by Lemma \ref{L:exist-critical}.

\begin{lemma}
\label{L:compactness}
There exists a path $x(t)$ in $\mathbb{R}^3$ such that
$$K \equiv \{ \,u(t, \bullet -x(t)) \; | \; t\geq 0 \, \}\subset H^1$$
has compact closure in $H^1$.
\end{lemma}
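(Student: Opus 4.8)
The plan is to extract the desired compactness from the profile decomposition machinery, exploiting the critical (threshold) nature of $u_c$. Given any sequence of times $t_n$, I want to show that $\{u(t_n,\bullet-x(t_n))\}$ has a convergent subsequence in $H^1$ for a suitable choice of recentering points $x(t_n)$; since $t_n$ is arbitrary, a standard diagonal/subsequence argument then upgrades this to the statement that $K$ has compact closure, with $x(t)$ defined by selecting, for each $t$, a recentering that (nearly) achieves the concentration. Concretely, first I would apply Proposition \ref{P:pd} to the bounded (by the $\dot H^1$ bound $\|\nabla u_c(t)\|_{L^2}\le \sigma_c\|\nabla Q\|_{L^2}$ together with mass conservation) $H^1$-sequence $\phi_n \equiv u_c(t_n)$, obtaining profiles $\psi^j$, remainder $W_n^M$, and parameters $x_n^j, t_n^j$.

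The heart of the matter is to show that there is exactly one profile, that its time parameter is $0$ (after the WLOG normalization), that it carries the full mass $M[Q]$, and that $W_n^M\to 0$ in $H^1$. This is exactly the dichotomy argument already run in the proof of Lemma \ref{L:exist-critical}, so I would reuse it almost verbatim: by the energy Pythagorean decomposition \eqref{E:energy-Pythag} and the fact that all scattering profiles (those with $|t_n^j|\to\infty$, and the $t_n^j=0$ profiles below the $\|Q\|_{L^2}\|\nabla Q\|_{L^2}$ threshold) have nonnegative energy, at least one $t_n^j=0$ profile $\psi^1$ satisfies $M[\psi^1]E[\psi^1]/M[Q]E[Q]=3\mu^2-2\mu^3$ for some $\mu\ge\lambda_0$, and by Theorem \ref{T:scattering} it must satisfy $\|\psi^1\|_{L^2}\|\nabla\psi^1\|_{L^2}>\mu\|Q\|_{L^2}\|\nabla Q\|_{L^2}$, i.e. $v^1(t)\equiv\nls(t)\psi^1$ lies in Case 2 of Prop. \ref{P:dichotomy}. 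The inductive hypothesis ($\egb(\mu,\sigma_c-\delta)$ false for all $\delta>0$, which applies since $\mu\le\sigma_c$ — the case $\mu>\sigma_c$ is excluded exactly as in Case 2 of Lemma \ref{L:exist-critical}'s proof) gives times $t_k$ with $\|\nabla v^1(t_k)\|_{L^2}/\|\nabla Q\|_{L^2}\to\sigma_c$; running the chain of inequalities \eqref{E:100} — with Lemma \ref{L:Pythag-flow} applied along the $\nls$ flow and the upper bound $\|\nabla u_c(t)\|_{L^2}\le\sigma_c\|\nabla Q\|_{L^2}$ at the end — forces all inequalities to be equalities, hence $W_n^M(t_k)\to 0$ in $H^1$ (so $W_n^M(0)=W_n^M\to0$ in $H^1$, using that $W_n^M(t)$ is a scattering solution together with the bounds of Fig. \ref{F:dichotomy}), $v^j\equiv 0$ for $j\ge2$, and $M[\psi^1]=M[Q]$.

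With only one nonzero profile and $W_n^M\to0$ in $H^1$, the profile decomposition at $M=1$ reads $u_c(t_n)=\psi^1(\bullet-x_n^1)+W_n^1$ with $\|W_n^1\|_{H^1}\to0$ (along the subsequence), which is precisely convergence of $u_c(t_n,\bullet+x_n^1)$ to $\psi^1$ in $H^1$. Setting $x(t_n)=-x_n^1$ (and, for general $t$, choosing $x(t)$ by the analogous recentering) yields that every sequence in $K$ has a subsequence converging in $H^1$; hence $\bar K$ is compact. For the continuity of $t\mapsto x(t)$: this follows from the uniform-in-$H^1$ concentration just established together with continuity of the $\nls$ flow in $H^1$ — if $x(t)$ jumped, two nearby times would force $\psi^1$ to be $H^1$-close to two far-apart translates of itself, impossible since $\psi^1\ne0$; a standard argument (as in \cite{KM}, \cite{DHR}) makes $x(t)$ continuous, possibly after a harmless modification.

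The main obstacle, as in the companion lemma, is \emph{ruling out mass/energy leaking into secondary profiles or into the remainder}: a priori the decomposition could split $u_c$ into several pieces each strictly below the critical threshold in mass-energy, and one must use the sharp threshold structure ($\sigma_c$ being a supremum, the inductive hypothesis, Theorem \ref{T:scattering}, and the $\dot H^1$ Pythagorean identity \emph{propagated along the flow} via Lemma \ref{L:Pythag-flow}) to collapse the decomposition to a single profile carrying everything. The bookkeeping of which profiles have $t_n^j=0$ versus $|t_n^j|\to\infty$, and the fact that Lemma \ref{L:Pythag-flow} requires an a priori uniform $\dot H^1$ bound on the flow (supplied here by $\sigma_c<\infty$), are the delicate points; everything else is a repetition of the argument in Lemma \ref{L:exist-critical}.
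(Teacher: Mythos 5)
Your proposal is correct and follows essentially the same route as the paper: reduce to sequential compactness along arbitrary time sequences $t_n\to\infty$, apply the profile decomposition (via Lemma \ref{L:exist-nonscat}) to $\phi_n=u_c(t_n)$, and rerun the collapse-to-a-single-profile argument of Lemma \ref{L:exist-critical} to get $\psi^j=0$ for $j\geq 2$ and $W_n^M\to 0$ in $H^1$, whence $u_c(t_n,\bullet-x_n)\to\psi^1$. The paper states this in three lines, deferring the reduction to a path $x(t)$ to \cite[Appendix A]{DHR} and the profile collapse to the earlier proof; your write-up simply fills in those deferred details.
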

\begin{proof}
As we showed in \cite[Appendix A]{DHR} it suffices to show that for
each sequence of times $t_n \to \infty$, there exists (passing to a
subsequence) a sequence $x_n$ such that $u(t_n, \bullet -x_n)$
converges in $H^1$.

Take $\phi_n=u(t_n)$ in Lemma \ref{L:exist-nonscat}. Arguing
similarly to the proof of Lemma \ref{L:exist-critical}, we obtain
that $\psi^j=0$ for $j\geq 2$ and $W_n^M\to 0$ in $H^1$ as $n \to
\infty$.  Hence, $u(t_n,\bullet-x_n)\to \psi^1$ in $H^1$.

\end{proof}

As a result of Lemma \ref{L:compactness}, we have a uniform-in-time
$H^1$ concentration of $u_c(t)$.

\begin{corollary}
\label{C:localization} For  each $\epsilon>0$, there exists $R>0$
such that for all $t$, $\|u(t,\bullet - x(t))\|_{H^1(|x|\geq R)}\leq
\epsilon$
\end{corollary}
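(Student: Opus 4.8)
The plan is to deduce the uniform localization directly from the precompactness of $K$ established in Lemma \ref{L:compactness}, together with the elementary fact that a single $H^1$ function has a vanishing tail at spatial infinity. I would argue by contradiction: suppose the conclusion fails for some $\epsilon_0>0$. Then there exist a sequence of times $t_n\geq 0$ and radii $R_n\to+\infty$ such that
$$
\|u(t_n,\bullet-x(t_n))\|_{H^1(|x|\geq R_n)} > \epsilon_0 .
$$
Since the functions $u(t_n,\bullet-x(t_n))$ all lie in $K$, which has compact closure in $H^1$, we may pass to a subsequence along which $u(t_n,\bullet-x(t_n))\to f$ strongly in $H^1$ for some $f\in H^1$.

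The second step is to transfer the non-vanishing tail from the sequence to the limit. For $n$ large enough, $\|u(t_n,\bullet-x(t_n))-f\|_{H^1}<\epsilon_0/2$, and since the spatially restricted norm $\|\cdot\|_{H^1(|x|\geq R)}$ obeys the triangle inequality and is dominated by the full $H^1$ norm, we obtain $\|f\|_{H^1(|x|\geq R_n)}>\epsilon_0/2$ for all such $n$. But $f\in H^1$ and $R_n\to+\infty$ force $\|f\|_{H^1(|x|\geq R_n)}\to 0$ by dominated convergence, which is the desired contradiction.

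Alternatively, one can use total boundedness of $\overline K$ directly: given $\epsilon>0$, cover $\overline K$ by finitely many $H^1$-balls of radius $\epsilon/2$ centered at $g_1,\dots,g_N$, choose $R_i$ with $\|g_i\|_{H^1(|x|\geq R_i)}\leq\epsilon/2$, and set $R=\max_i R_i$; every $u(t,\bullet-x(t))$ is within $\epsilon/2$ in $H^1$ of some $g_i$, so $\|u(t,\bullet-x(t))\|_{H^1(|x|\geq R)}\leq\epsilon$ for all $t\geq 0$. This is just the standard equivalence between precompactness in $H^1$ and uniform tightness, so there is no genuine obstacle; the only point requiring any care is the (immediate) observation that the restricted $H^1$ norm is a seminorm bounded above by the global one.
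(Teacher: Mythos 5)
Your argument is correct and is exactly the standard ``precompactness implies uniform tightness'' argument that the paper invokes by citing \cite[Cor.\ 3.3]{DHR}, where the finite $\epsilon/2$-net covering version you give as an alternative is precisely the proof carried out there. No difference in substance.
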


The proof is elementary, but is given in \cite[Cor. 3.3]{DHR}.

We next observe that the localization property of $u_c(t)$ given by
Corollary \ref{C:localization} implies that $u_c(t)$ blows-up in
finite time by Prop. \ref{P:local-blowup-time}.  But this
contradicts the boundedness of $u_c(t)$ in $H^1$, and hence,
$u_c(t)$ cannot exist.

This contradiction completes the proof of Theorem \ref{T:main}.

\appendix

\section{Proof of Lemma \ref{L:center-control}}
\label{S:pf-center-control}

Here we will carry out the proof of Lemma \ref{L:center-control},
which closely follows the proof of Lemma 5.1 in \cite{DHR}.  We will
adopt the notation from that paper.

Without loss of generality we may assume that $x(0)=0$.  Let $\ds
R(T)= \max_{0\leq t\leq T} |x(t)|$.  It suffices to prove that there
is an absolute constant $c>0$ such that for each $T$ with
$R(T)=|x(T)|\gg 1$, we have
\begin{equation}
\label{E:x(T)-bound} |x(T)| \leq c\,T \left(e^{-|x(T)|} +
\epsilon\right)^2 \,.
\end{equation}
Consider such a $T>0$.  We know that for $0\leq t\leq T$, we have
$|x(t)|\leq R(T)$.  By (5.4) in \cite{DHR} (and adopting the
definition of $z_R(T)$ in that paper),  there is an absolute
constant $c_1$ such that
$$
|z_{2R(T)}'(t)| \leq c_1 \int_{|x|\geq 2R(T)} \left( |\nabla u(t)|^2
+ |u(t)|^2 \right) \,dx .
$$
By \eqref{E:apx-close},  for all $0\leq t\leq T$ there holds
$$
|z_{2R(T)}'(t)| \leq c_2 \left(\epsilon+\|Q\|_{H^1(|x|\geq R(T))}
\right)^2 .
$$
Owing to the exponential localization of $Q(x)$, we have upon
integrating the above inequality over $[0,T]$ the bound
\begin{equation}
 \label{E:step-a}
|z_{2R(T)}(t) - z_{2R(T)}(0)| \leq c_3 T \left(\epsilon+e^{-R(T)}
\right)^2 .
\end{equation}
Due to the fact that $|x(T)|=R(T)$, we have that there exists an
absolute constant $c_4$ such that
\begin{equation}
\label{E:step-b} |z_{2R(T)}(T)| \geq c_4 R(T) \,.
\end{equation}
Moreover, since $x(0)=0$, we have the simple bound
\begin{equation}
\label{E:step-c} |z_{2R(T)}(0)| \leq c_5 \left(1 + R(T)\epsilon^2
\right) .
\end{equation}
By combining \eqref{E:step-a}, \eqref{E:step-b}, and
\eqref{E:step-c}, we obtain \eqref{E:x(T)-bound}.

\section{Nonzero momentum}
\label{S:Galilean}

Suppose that we have a solution $u(x,t)$ with $M[u] = M[Q]$ and
$P[u]\neq 0$.  We apply the Galilean transformation to the solution
$u(t)$ as in Section 4 of \cite{DHR} to obtain a new solution
$\tilde u(x,t)$:
$$
\tilde u (x,t) = e^{i x \xi_0} e^{-it |\xi_0|^2} u(x-2\xi_0 t, t)
\quad \text{with} \quad \xi_0 = -\frac{P[u]}{M[u]}.
$$
Then
$$P[\tilde u] = 0\,, \quad M[\tilde u] = M[u] = M[Q] \,,$$
$$E[\tilde u] = E[u] - \frac12 \frac{P[u]^2}{M[u]} \,,$$
and
$$ \|\nabla \tilde u\|^2_2 = \|\nabla u \|^2_2 - \frac{P[u]^2}{M[u]} \,.$$
This choice of $\xi_0$ furnishes the lowest value of $E[\tilde u]$
under any choice of $\xi_0$.  It is easier to have $E[\tilde
u]<E[Q]$ than $E[u]<E[Q]$, suggesting that we should always
implement this transformation to maximize the applicability of Prop.
\ref{P:dichotomy}.  However, one should show for consistency that if
the dichotomy of Prop. \ref{P:dichotomy} was already valid for $u$
before the Galilean transformation was applied (i.e. $E[u]<E[Q]$),
then the selection of case (1) versus (2) in Prop. \ref{P:dichotomy}
is preserved.

Suppose $M[u]=M[Q]$, $E[u]<E[Q]$ and $P[u]\neq 0$.  Define $\tilde
u$ as above.  Let $\lambda_-$, $\lambda$ be defined in terms of
$E[u]$ by \eqref{E:energy-lambda}, and let $\eta(t)$ be defined in
terms of $u(t)$ by \eqref{E:eta-def}. Let $\tilde \lambda_-$,
$\tilde \lambda$ and $\tilde \eta(t)$ be the same quantities
associated to $\tilde u$.

Suppose  that case (1) of Prop. \ref{P:dichotomy} holds for $u$.
This implies, in particular, that $\eta(t) \leq 1$ for all $t$.  But
clearly $\tilde \eta(t) \leq \eta(t) \leq 1$, and thus, case (1) of
Prop. \ref{P:dichotomy} holds for $\tilde u$ also.

Now suppose that case (1) of Prop. \ref{P:dichotomy} holds for
$\tilde u$.  Then $\tilde \eta(t) \leq \tilde \lambda_-$ for all
$t$.  We must show that
$$
\eta^2 = \tilde \eta^2  + \frac{P[u]^2}{\|Q\|_{L^2}^2\|\nabla
Q\|_{L^2}^2} \leq \tilde \lambda_-^2 + \frac{P[u]^2}{6 M[Q]E[Q]}
\leq \lambda_-^2.
$$
This reduces to an algebraic problem.  For convenience, let $\alpha
= E[u]/E[Q]$ and $\beta = P[u]^2/M[Q]E[Q]$. Then $\tilde \lambda_-$
is the smaller of the two roots of $3\tilde \lambda_-^2 - 2\tilde
\lambda_-^3 = \alpha-\frac12\beta$ while $\lambda_-$ is the smaller
of the two roots of $3\lambda_-^2-2\lambda_-^3 = \alpha$.  In moving
$\rho$ forward from $\tilde \lambda_-^2$ to $\tilde \lambda_-^2 +
\frac16\beta$, we increment the function $3\rho-2\rho^{3/2}$ by an
amount at most $\frac16\beta (3-3\rho^{1/2})  \leq \frac12\beta$.
This completes the argument.

\end{document}